\documentclass[12pt]{article}

\usepackage{natbib}
\usepackage{amsthm}
\usepackage{amssymb}
\usepackage{amsmath}
\usepackage{amsfonts}
\usepackage{times}
\usepackage{enumitem}
\usepackage[colorlinks]{hyperref}
\usepackage{pdfsync}

\usepackage[paper=a4paper, top=2.5cm, bottom=2.5cm, left=2.5cm, right=2.5cm]{geometry}

\def\qed{\hfill $\vcenter{\hrule height .3mm
\hbox {\vrule width .3mm height 2.1mm \kern 2mm \vrule width .3mm
height 2.1mm} \hrule height .3mm}$ \bigskip}

\def \I{\mathcal{I}}
\def \prodm{\xi}
\def \tilt{ \mathrm{\tau} } 
\def \W{\mathrm{W_1}}
\def \WW{\mathrm{W_2}}
\def \Sph{\mathbb{S}^{n-1}}
\def \RR {\mathbb R}

\def \EE {\mathbb E}

\def \CC {\mathbb C}
\def \PP {\mathbb P}
\def \eps {\varepsilon}

\def \Lip {\mathrm {Lip}}

\def \TTT {\mathcal{T}}

\def \FF {\mathcal{F}}
\def \FFF {\mathcal{\tilde F}}

\def \tr {\mathrm{Tr}}

\def \DC {\mathcal{C}_n}
\def \CC {\overline{\mathcal{C}}_n}
\def \MW {\mathbf{GW}}
\def \Ent {\mathrm{Ent}}
\def \KL {\mathrm{D}_{\mathrm{KL}}}
\def \HH {\mathcal{H}}
\def \Comp { \mathcal{D} }

\newtheorem{theorem}{Theorem}
\newtheorem{lemma}[theorem]{Lemma}

\newtheorem{fact}[theorem]{Fact}
\newtheorem{observation}[theorem]{Observation}

\newtheorem{proposition}[theorem]{Proposition}
\newtheorem{corollary}[theorem]{Corollary}
\theoremstyle{definition}
\newtheorem{definition}[theorem]{Definition}
\theoremstyle{remark}
\newtheorem{remark}{Remark}

\long\def\symbolfootnotetext[#1]#2{\begingroup
\def\thefootnote{\fnsymbol{footnote}}\footnotetext[#1]{#2}\endgroup}

\begin{document}

\title{Gaussian-width gradient complexity, reverse log-Sobolev inequalities and nonlinear large deviations}
\date{}
\author{Ronen Eldan\thanks{Weizmann Institute of Science. Incumbent of the Elaine Blond Career Development Chair. Partially supported by the Israel Science Foundation (grant No. 715/16).}}
\maketitle

\begin{abstract}
We prove structure theorems for measures on the discrete cube and on Gaussian space, which provide sufficient conditions for mean-field behavior. These conditions rely on a new notion of complexity for such measures, namely the Gaussian-width of the gradient of the log-density. On the cube $\{-1,1\}^n$, we show that a measure $\nu$ which exhibits low complexity can be written as a mixture of measures $\{\nu_\theta\}_{\theta \in \mathcal{I}}$ such that: \textbf{i.} for each $\theta$, the measure $\nu_\theta$ is a small perturbation of $\nu$ such that $\log \tfrac{d \nu_\theta}{d \nu}$ is a linear function whose gradient is small and, \textbf{ii.} $\nu_\theta$ is close to some product measure, in Wasserstein distance, for most $\theta$. Thus, our framework can be used to study the behavior of low-complexity measures beyond approximation of the partition function, showing that those measures are roughly mixtures of product measures whose entropy is close to that of the original measure. In particular, as a corollary of our theorems, we derive a bound for the na\"ive mean-field approximation of the log-partition function which improves the nonlinear large deviation framework of \cite{CD14} in several ways: 1. It does not require any bounds on second derivatives. 2. The covering number is replaced by the weaker notion of Gaussian-width 3. We obtain stronger asymptotics with respect to the dimension. Two other corollaries are decomposition theorems for exponential random graphs and large-degree Ising models. In the Gaussian case, we show that measures of low-complexity exhibit an almost-tight reverse Log-Sobolev inequality. 
\end{abstract}

\tableofcontents

\section{Introduction}
Let $\mu$ be a measure on the discrete hypercube $\DC= \{-1,1\}^n$. In this work, we are interested in the following quesiton: Under what natural conditions does this measure admit an approximate decomposition into a mixture of product measures most of which having roughly the same entropy as the measure $\mu$? This form of simplicity is a strong manifestation of what is referred to in the statistical mechanics literature as \textit{mean-field} behavior.

Our main theorem provides a sufficient condition for such behavior, using a new notion of complexity, namely \textit{Gaussian-width gradient complexity}. We say that a measure has low-complexity if one has nontrivial bounds on the Gaussian width of the gradient of its log-density (this is made rigorous and quantitative below). Our definition is inspired by \cite{CD14}, where covering numbers are considered. 

Our main theorem (Theorem \ref{thm:maindecomp} below) shows that for a measure $\mu$ on $\DC$ with log-Lipschitz density, a low-complexity condition implies the existence of an approximate decomposition into product measures as described above. Additionally, these measures can be written as small \emph{tilts} of the original measure, namely, they can be attained by applying a change of density with respect to some log-linear function whose gradient is small.

Perhaps the most studied manifestation of mean field behavior is the approximation of the partition function, up to first order, via a product measure. More precisely, defining $\DC= \{-1,1\}^n$ equipped with the uniform measure $\mu$, the Gibbs variational principle states that
\begin{equation}\label{eq:Gibbs}
\log \int e^f d \mu = \sup_{\nu} \left ( \int f d \nu - \KL(\nu \Vert \mu) \right )
\end{equation}
where the supremum is taken over all probability measures $\nu$ on $\DC$ and $\KL$ denotes the Kullback-Leibler divergence (defined below). The \emph{na\"ive mean-field} approximation is said to hold true when the supremum is approximately saturated by the class of product measures. 

Suppose that the quantity $\log \int e^f d \mu$ is of order $O(n)$. One is often interested in cases where the approximation holds in first order, hence, the above inequality is saturated by product measures up to an error of $o(n)$. This sort of approximation corresponds to the case that the function $f$ is correlated with a linear function in a region whose measure is at least $\exp(-o(n))$. The main theorem of \cite{CD14} gives a sufficient condition for such an approximation to hold true. Our work takes another step, giving sufficient conditions for $f$ to be correlated with a (relatively small) family of linear functions \textit{almost-everywhere}. In physical terms, whereas the approximation for the partition function is equivalent to the existence of a single pure-state of non-negligible probability, our result gives a decomposition of the entire measure into pure states. As shown in an example below, replacing the unique product measure by a family thereof is necessary.

Our structure theorem applies to several settings, including subgraph-counting functions in random graphs, density of arithmetic progressions, mean-field Ising and Potts models and exponential random graphs (see below for background and references). A central corollary of the above-mentioned estimate for the partition function is a general framework deriving large deviation principles for nonlinear functions of Bernoulli random variables (Theorem \ref{thm:largedev} below) which extends the one in \cite{CD14} and improves the bounds in the examples considered there. Our framework provides a seemingly cleaner theorem which, in particular, does not require any assumptions on second derivatives.

A central example where the large deviations framework comes in handy is in the derivation of a large deviation principle for the number of triangles (or more generally, subgraph densities) in an Erd\"os-R\'enyi random graph $G=G(N,p)$. Letting $T$ denote the number of triangles in $G$, the goal is to find precise asymptotics for $\log \PP(T \geq (1+\delta) \EE T)$ as $N \to \infty$ (with $p$ possibly depending on $N$). For background and history concerning large deviations for random graphs, we refer the reader to the book \cite{Chattejee-LDRG} and references therein. When the function $f$ in \eqref{eq:Gibbs} is the number of triangles $T$, it turns out that when maximizing over product measures $\nu$, the right hand side becomes a tractable quantity which can be calculated almost precisely, as was done in \cite{LZ14} for the case of triangles, and later in \cite{BGLZ-Uppertails} for general subgraph counts. As we will demonstrate, applying our framework to the these examples seems to be a rather simple task that requires significantly less technical work compared to previous works.

In a subsequent work, \cite{EldanGross-Decomp}, our methods are used to derive a theorem showing that these product measures are close to critical points of the associated mean-field functional, giving a more precise characterization of the mixture.

\subsection{Main structure theorems}
To formulate our results, let us start with some definitions. Consider the discrete cube $\DC = \{-1,1\}^n$ equipped with the uniform probability measure $\mu$. First, we would like to define a notion of \emph{complexity} of a function $f:\DC \to \RR$. To this end, we first define the \emph{Gaussian-width} of a set $K \subset \RR^n$ as 
$$
\MW(K) = \EE \left [ \sup_{x \in K} \langle x, \Gamma \rangle \right ]
$$
where $\Gamma \sim N(0, \mathrm{Id})$ is a standard Gaussian random vector in $\RR^n$. Next, for a function $f:\DC \to \RR$ and a point $y = (y_1,...,y_n) \in \DC$ and $i \in [n]$, we write
$$
\partial_i f(y) = \frac{1}{2} \left (f(y_1,\dots, y_{i-1}, 1, y_{i+1}, \dots y_n) - f(y_1,\dots, y_{i-1}, -1, y_{i+1}, \dots y_n) \right )
$$
and define the discrete gradient of $f$ as
\begin{equation}\label{def:discg}
\nabla f(y) = (\partial_1 f(y), \dots, \partial_n f(y)).
\end{equation}
we will also define
$$
\Lip(f) = \max_{i \in [n], y \in \DC} | \partial_i f(y)|,
$$
the discrete Lipschitz constant of $f$.

Finally, for a function $f:\DC \to \RR$, the \emph{gradient-complexity} of $f$ will be defined as
\begin{equation}\label{eq:defcomplexity}
\Comp(f) := \MW \left (  \left \{\nabla f(y): ~ y \in \DC  \right \} \cup \{0\} \right )
\end{equation}
and for a measure $\nu$ on $\DC$, by slight abuse of notation, we define its complexity as 
$$\Comp(\nu) := \Comp \left (\log \frac{d \nu}{d \mu} \right ).
$$

\begin{remark}
In the following, the main regime which is of interest to us is functions $f$ which takes values of order $O(n)$ and whose Lipschitz constant is $\Lip(f) = O(1)$. It is clear that such functions trivially have complexity at most $O(n)$. The functions for which our results will be nontrivial are the ones whose complexity is $o(n)$.
\end{remark}
	
For two measures $\nu_1,\nu_2$ on $\DC$ we define the Wasserstein mass-transportation distance between $\nu_1,\nu_2$ as
$$
\W(\nu_1,\nu_2) = \inf_{(X,Y) \mbox{ s.t. } \atop X \sim \nu_1, Y \sim \nu_2} \EE d_H(X,Y) 
$$
where $d_H$ denotes the Hamming distance. We say that a measure $\xi$ on $\DC$ is a \emph{product measure} if $X_1,\dots,X_n$ are independent where $(X_1,\dots,X_n) \sim \xi$.

Roughly speaking, our first result states that if $\nu$ is a measure of low complexity then there exists another measure $\tilde \nu$ whose log-density is close to that of $\nu$ in $L_\infty$ and such that its $\W$-distance to some product measure $\xi$ is small. 
\begin{theorem} \label{thm:main}
Let $\nu$ be a probability measure on the discrete cube $\DC$. Then for every {\small $\eps \in \left (0, 1/16 \right )$}, there exists a probability measure
$\tilde \nu$ such that if we define the functions $f,\tilde f$ by the equations $\frac{d \nu}{d \mu} = e^{f}$ and $\frac{d \tilde \nu}{d \mu} = e^{\tilde f}$ then we have
\begin{equation}\label{eq:ftildef}
|f(y) - \tilde f(y)| \leq \eps n, ~~ \forall y \in \DC
\end{equation}
and for the unique \emph{product} measure $\xi$ on $\DC$ satisfying $\int y d\xi(y) = \int y d \tilde \nu(y)$, one has that
$$
\W(\tilde \nu, \xi) \leq 2^7 \sqrt{\frac{n \Comp(\nu)}{\eps}} + 4 n e^{-\frac{1}{64 \eps^2}}.
$$
Furthermore, there exists a set $I \subseteq [n]$ with $|I| \geq n - 2 n e^{-\frac{1}{32 \eps^2}}$ such that the following holds. For a measure $\rho$ on $\DC$, denote by $\pi_I (\rho)$ the marginal law of $\rho$ on the subset $I$. Let $\xi' := \pi_I(\xi) \times \pi_{[n] \setminus I}(\tilde \nu)$. Then one has
$$
\W(\tilde \nu, \xi') \leq 2^6 \sqrt{\frac{n \Comp(\nu)}{\eps}}.
$$
\end{theorem}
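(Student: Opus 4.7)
The plan is to apply Eldan's stochastic localization scheme to the one-parameter family of \emph{tilts} of $\nu$. Writing $d\nu/d\mu = e^f$, define for $\theta \in \RR^n$
\[
\frac{d\nu_\theta}{d\mu}(y) \;\propto\; \exp\bigl(f(y) + \langle \theta, y\rangle\bigr),
\]
with mean $m(\theta) = \int y\, d\nu_\theta(y)$ and covariance matrix $A(\theta)$. Run the SDE $d\theta_t = m(\theta_t)\,dt + dB_t$ with $\theta_0 = 0$. A standard It\^o calculation shows that $(\nu_{\theta_t})_{t \geq 0}$ is a measure-valued martingale with $\EE[\nu_{\theta_t}] = \nu$, so $\nu$ decomposes as a mixture of tilts of itself along the random direction $\theta_t$.

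\textbf{Truncation and $L_\infty$ bound.} Pick $T$ of order $\eps^4$, chosen so that $2 e^{-\eps^2/(8T)} \leq 2 e^{-1/(64\eps^2)}$. Define coordinatewise stopping times $\tau_i = \inf\{t : |(\theta_t)_i| > \eps/2\} \wedge T$, set $I = \{i : \tau_i = T\}$, and let $\tilde\theta_i = (\theta_T)_i \mathbf{1}_{i \in I}$, $\tilde\nu := \nu_{\tilde\theta}$. Then $\|\tilde\theta\|_\infty \leq \eps/2$ forces $|\langle\tilde\theta, y\rangle| \leq \eps n/2$ on $\DC$, and hence also $|\log Z(\tilde\theta)| \leq \eps n/2$, yielding $|f - \tilde f| \leq \eps n$. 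A coordinatewise reflection/Doob argument (with the drift absorbed into constants since $\|m\|_\infty \leq 1$ and $T$ is tiny) gives $\Pr(\tau_i < T) \leq 2 e^{-1/(64\eps^2)}$, so $\EE|I^c| \leq 2 n e^{-1/(64\eps^2)}$, producing both the lower bound on $|I|$ and the exponential error term.

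\textbf{Wasserstein bound to a product measure.} Let $\xi$ be the product measure on $\DC$ matching the marginals of $\tilde\nu$. A sequential chain-rule coupling combined with Pinsker's inequality gives
\[
\W(\tilde\nu, \xi) \;\leq\; \sqrt{\tfrac{n}{2}\,\KL(\tilde\nu \Vert \xi)}.
\]
Since $\tilde\nu$ and $\xi$ share marginals, $\KL(\tilde\nu \Vert \xi) = H(\xi) - H(\tilde\nu)$ is the \emph{total correlation} among the coordinates of $\tilde\nu$. An entropy-derivative computation along the localization expresses $\EE[H(\nu) - H(\nu_{\theta_T})]$ as a time integral of a quadratic functional of $A(\theta_s)$; this trace-type control is by itself too crude, but the Gaussian-width hypothesis enters through a Dudley/chaining argument applied to the Gaussian process $\theta \mapsto \sup_{y \in \DC} \langle \nabla f(y), \theta\rangle$, whose expected supremum at scale $\sqrt T$ equals $\sqrt T \, \Comp(\nu)$. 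With $T \asymp \eps^4$ the chaining upgrades the trace bound into $\EE \KL(\tilde\nu \Vert \xi) \lesssim \Comp(\nu)/\eps$, yielding the stated $\W$ estimate. The refinement $\W(\tilde\nu, \xi') \leq 2^6 \sqrt{n\Comp(\nu)/\eps}$ then follows by running the same coupling only on coordinates in $I$, since $\xi'$ already agrees with $\tilde\nu$ on $I^c$ by construction.

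\textbf{Main obstacle.} The crux is the final step: converting the Gaussian-width input $\Comp(\nu)$ into a total-correlation (equivalently, off-diagonal covariance) bound for $\tilde\nu$. Plain stochastic localization yields only a trace-type control on $A(\theta_s)$, which is too weak; it is the chaining / majorizing-measures step on the gradient Gaussian process indexed by $\DC$ that supplies the nontrivial $\sqrt{\Comp(\nu)}$ saving and distinguishes this theorem from earlier mean-field estimates based on covering numbers.
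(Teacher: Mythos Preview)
Your high-level picture is in the right neighborhood---the tilt $\tilde\nu$ is indeed produced by a short-time stochastic process and the Gaussian-width assumption constrains the dynamics---but the heart of the argument, which you yourself flag as the ``main obstacle,'' is misidentified, and the proposal as written has a genuine gap at exactly that point.

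The mechanism by which $\Comp(\nu)$ enters is not chaining or Dudley. In the paper one tracks the martingale $g_t = \EE[\tanh(\nabla f(X_\infty)) \mid \FF_t]$, which satisfies $dg_t = \Gamma_t\,\sigma_t^{1/2}\,dB_t$ and, crucially, is confined for all $t$ to the convex hull $K$ of $\{\tanh(\nabla f(y)) : y \in \DC\}$. It\^o's isometry gives the exact identity
\[
\EE\bigl[\langle g_t, B_t\rangle\bigr] \;=\; \int_0^t \EE\bigl[\tr(\Gamma_s \sigma_s^{1/2})\bigr]\,ds,
\]
while confinement in $K$ forces $\EE[\langle g_t, B_t\rangle] \leq \sqrt{t}\,\MW(K)$. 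Combining these yields a random time $s \leq t$ with $\tr(\Gamma_s \sigma_s^{1/2}) \lesssim \MW(K)/\sqrt{t}$ with high probability (Lemma~\ref{lem:divergence}). A separate coordinatewise comparison (Lemma~\ref{lem:HtAt}) shows $\tr(\Gamma_t \sigma_t^{1/2}) \gtrsim \tr(\sigma_t^{1/2} H_t)$, where $H_t$ is the conditional covariance of $g_\infty$; it is this $H_t$---not the total correlation $\KL(\tilde\nu \Vert \xi)$---that bounds $\W(\tilde\nu, \xi)$ via the explicit coupling of Proposition~\ref{prop:step1}. So the Gaussian width is converted into covariance control by a divergence-theorem\,/\,It\^o-isometry identity, and chaining plays no role.

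Your sketch also has a scale mismatch that reflects this missing idea. The paper stops at time of order $\eps^2$ under the $\ell_2$ constraint $\|X_t\|_2 \leq \eps\sqrt{n}$, which via the identity above gives $\tr H_\tau \lesssim \Comp(\nu)/\eps$ directly; your choice $T \asymp \eps^4$ with coordinatewise $\ell_\infty$ stopping would, under the same It\^o argument, produce $\Comp(\nu)/\eps^2$, which is too weak. And the proposed route through $\KL(\tilde\nu \Vert \xi)$ via an entropy derivative along the flow does not do what you claim: the entropy dissipated along the localization is the entropy of the \emph{mixing} law, not the multi-information of a single endpoint tilt, so no bound on the total correlation of $\tilde\nu$ follows from that computation without an additional idea.
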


\begin{remark} 
It is important to emphasize that the function $\tilde f$ provided by the above theorem is by no means unique. For the sake of intuition, a good example to consider is when $f(y) = - \beta T(G)$, where $T(G)$ is the number of triangles in the $N$-vertex graph whose edge set is defined by the point $y \in \DC$, $n = {N \choose 2}$ and $\beta = 1/N$. In this case, one expects the measure $\nu$ to be (in a rough sense) close to a distribution on approximately-bipartite graphs. It is clear by symmetry that this distribution is invariant under permutations of the vertices. The choice of the function $\tilde f$ in the above theorem should then correspond to the choice of the partition, under which the edges should be approximately independent. Note that the entropy associated the choice of this partition is of the order $N$, which is significantly smaller than the entropy we expect to have left after that choice, which is of order $n$.
\end{remark}

Next, we would like to formulate an easy corollary to the above, which will be useful in the context of large deviation theory. For two probability measures $\nu_1,\nu_2$ on $\DC$, we define the Kullback-Leibler divergence of $\nu_1$ with respect to $\nu_2$ as
$$
\KL(\nu_1 \Vert \nu_2) = \int \log \left (\tfrac{d \nu_1}{d \nu_2} (y) \right ) d \nu_1(y).
$$
The corollary is analogous to \cite[Theorem 1.6]{CD14}. It reads:
\begin{corollary} \label{cor:main}
For every $f:\DC \to \RR$, there exists a \emph{product} probability measure $\xi$ on $\DC$ which satisfies
$$
\log \int_{\DC} e^{f} d \mu \leq \int f d \xi - \KL(\xi \Vert \mu) +  64 \Lip(f)^{2/3} \Comp(f)^{1/3} n^{2/3}.
$$
\end{corollary}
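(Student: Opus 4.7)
The plan is to apply Theorem~\ref{thm:main} to the Gibbs measure associated with $f$ and then compare the resulting measures via the variational formula for $\log Z$. Set $Z := \int e^f d\mu$ and let $\nu$ be the probability measure with $\frac{d\nu}{d\mu} = e^f/Z$, so that $\log \frac{d\nu}{d\mu} = f - \log Z$. Since adding a constant does not change the discrete gradient, $\Comp(\nu) = \Comp(f)$, and Theorem~\ref{thm:main} applies to $\nu$ at the same complexity level. For an $\eps \in (0, 1/16)$ to be chosen, let $\tilde\nu$ and the product measure $\xi$ be the objects produced by the theorem, with $\tilde f := \log \frac{d\tilde\nu}{d\mu}$ satisfying $\|\tilde f - (f - \log Z)\|_\infty \leq \eps n$ and with the stated bound on $\W(\tilde\nu, \xi)$.

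The crucial observation is that $\tilde\nu$ is already almost-optimal in the Gibbs variational principle for $f$: because $\KL(\tilde\nu\Vert\mu) = \int \tilde f\, d\tilde\nu$, a direct calculation gives
\[
\int f\, d\tilde\nu - \KL(\tilde\nu\Vert\mu) \;=\; \int \bigl(f - \tilde f\bigr)\, d\tilde\nu \;=\; \log Z + \int \bigl((f - \log Z) - \tilde f\bigr)\, d\tilde\nu,
\]
so the left-hand side lies within $\eps n$ of $\log Z$ by the $L^\infty$ bound from Theorem~\ref{thm:main}; in particular $\log Z \leq \int f\, d\tilde\nu - \KL(\tilde\nu\Vert\mu) + \eps n$. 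Next I pass from $\tilde\nu$ to $\xi$. Since flipping any single coordinate changes $f$ by at most $2\Lip(f)$, the function $f$ is $2\Lip(f)$-Lipschitz with respect to Hamming distance, hence by Kantorovich duality $|\int f\, d\tilde\nu - \int f\, d\xi| \leq 2\Lip(f)\, \W(\tilde\nu,\xi)$. Moreover, $\xi$ is by construction the product measure whose one-dimensional marginals coincide with those of $\tilde\nu$; since on $\DC$ one has $\KL(\cdot\Vert\mu) = n\log 2 - H(\cdot)$ and Shannon entropy is subadditive, $H(\xi) = \sum_i H(\tilde\nu_i) \geq H(\tilde\nu)$, i.e.\ $\KL(\xi\Vert\mu) \leq \KL(\tilde\nu\Vert\mu)$. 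Combining these two inequalities gives
\[
\log Z \;\leq\; \int f\, d\xi - \KL(\xi\Vert\mu) + \eps n + 2\Lip(f)\, \W(\tilde\nu,\xi).
\]

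It remains to plug in the Wasserstein estimate of Theorem~\ref{thm:main} and optimize $\eps$. The dominant error terms are $\eps n$ and $2^8 \Lip(f)\sqrt{n\Comp(f)/\eps}$; balancing them by choosing $\eps \asymp \Lip(f)^{2/3}(\Comp(f)/n)^{1/3}$ yields a total error of order $\Lip(f)^{2/3}\Comp(f)^{1/3}n^{2/3}$, matching the claim up to numerical constants. The only real nuisance is the subdominant contribution $8n\Lip(f)\,e^{-1/(64\eps^2)}$ in the Wasserstein bound: for the chosen $\eps$, $\eps^{-2} \gtrsim (n/\Comp(f))^{2/3}$, so this term is negligible in the only regime where the inequality is nontrivial, namely $\Comp(f) = o(n)$ (otherwise one appeals to the trivial estimate $|\log Z - \int f\, d\mu| \leq n\Lip(f)$, which is already stronger than the claim when $\Comp(f)/n$ is not small). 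A careful tracking of constants produces the factor $64$ stated in the corollary, and also lets one verify the feasibility constraint $\eps < 1/16$ whenever the result is not vacuous.
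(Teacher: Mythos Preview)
Your approach is correct and follows the same overall scheme as the paper: apply Theorem~\ref{thm:main} to the normalized Gibbs measure, use the $L^\infty$ closeness of $\tilde f$ to $f-\log Z$ to control the Gibbs functional at $\tilde\nu$, then transfer to the product measure via the Wasserstein bound and an entropy comparison.

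The one genuine difference is \emph{which} part of Theorem~\ref{thm:main} is invoked. You use the first part, taking $\xi$ to be the product measure with the same marginals as $\tilde\nu$; the entropy comparison $\KL(\xi\Vert\mu)\le\KL(\tilde\nu\Vert\mu)$ is then immediate from subadditivity (this is Fact~\ref{fact:chain} in the paper), and the exponential term $4n e^{-1/(64\eps^2)}$ sits in the Wasserstein bound. The paper instead uses the second part, working with the partial product measure $\xi'=\pi_I(\xi)\times\pi_{[n]\setminus I}(\tilde\nu)$; this gives a cleaner Wasserstein bound $2^6\sqrt{n\Comp(f)/\eps}$ with no exponential tail, but now $\xi'$ is not globally product, so the paper must slice over the $[n]\setminus I$ coordinates to extract a true product measure and then pay $(n-|I|)\log 2\le 2n\log 2\, e^{-1/(32\eps^2)}$ in the entropy comparison. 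Either way an exponentially small term appears and is absorbed by the ``trivial regime'' argument you describe (the paper makes exactly the same reduction, taking $\xi$ to be a Dirac at the maximizer of $f$ when $64\Lip(f)^{2/3}\Comp(f)^{1/3}n^{2/3}\ge n\log 2$). Your route is arguably the cleaner of the two; the only cost is that the Wasserstein constant is $2^7$ rather than $2^6$, so a careful accounting may yield a numerical constant slightly above $64$ rather than exactly $64$.
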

\begin{remark} \label{rem:continuity}
One can strengthen the above Corollary in the sense that $\Lip(f)$ can be replaced by a weaker notion of continuity. An inspection of the proof reveals that for any monotone, bounded and continuous function $\varphi:[0,1] \to [0,\infty)$ with $\varphi(0) = 0$, the following is true: Under the assumption $\Comp(f) = o(n)$, one gets a nontrivial mean-field approximation and as long as $f$ attains property that for all $x,y \in \DC$, one has $\frac{|f(x)-f(y)|}{n} \leq \varphi \left ( \frac{ d_H(x,y)}{n} \right )$. In other words, $f$ does not have to be Lipschitz in a local sense, it only needs to have small oscillations on some mesoscopic scale.
\end{remark}

We now turn to the formulation of our main structure theorem. For a measure $\nu$ on $\DC$ such that $d \nu = e^{f} d \mu$ and for a point $\theta \in \RR^n$ we define the \emph{tilt} of $\nu$ with respect to $\theta$, denoted by $\tilt_\theta \nu$, by the equation
$$
\frac{d (\tilt_\theta \nu)}{d \mu} (y) = \frac{ e^{f(y) + \langle \theta, y \rangle} }{ \int_{\DC} e^{f(z) + \langle \theta, z \rangle} d \mu }.
$$
Moreover, for every measure $\nu$ on $\DC$, define by $\prodm(\nu)$ to be the unique product measure having the same marginals as $\nu$. Define also $B(x_0,r) := \{x \in \RR^n; \|x-x_0\|_2 \leq r \}$.

Our main structure theorem states that any measure $\nu$ of low complexity admits a decomposition into small tilts which are close to product measures. Our theorem reads,
\begin{theorem} \label{thm:maindecomp}
	Let $\nu$ be a probability measure on the discrete cube $\DC$. For every {\small $\eps \in \\$ $\left (0, \tfrac 1 8 \log \left ( \tfrac{4n}{\Comp(\nu)} \right )^{-1/2} \right  )$} and $\alpha > 1$, there exists a measure $m$ supported on $B(0, \eps \sqrt{n}) \cap [- 1 ,1 ]^n $ such that $\nu$ admits the decomposition
	\begin{equation}\label{eq:decomp1}
	\int_{\DC} \varphi d \nu = \int_{B(0, \eps \sqrt{n})} \left (\int_{\DC} \varphi d (\tilt_\theta \nu) \right )  d m(\theta)
	\end{equation}
	for every test function $\varphi: \DC \to \RR$ and such that there exists $\Theta \subset \RR^n$ with $m(\Theta) > 1-\frac{1}{n}-\frac{1}{\alpha}$ so that for every $\theta \in \Theta$ one has
	\begin{equation}\label{eq:thmmd-2}
	\W \bigl (\tilt_\theta \nu, \xi_\theta \bigr ) \leq 16 \sqrt{\frac{\alpha n \Comp(\nu) }{\eps}},
	\end{equation}
	for some product measure $\xi_\theta$. Moreover, we have that
	\begin{equation}\label{eq:thmmd-3}
	\left | \KL(\nu \Vert \mu) - \int \KL( \tilt_\theta \nu \Vert \mu ) d m(\theta) \right | \leq 2 \eps n.
	\end{equation}
\end{theorem}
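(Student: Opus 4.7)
The natural approach here is Eldan's \emph{stochastic localization}, which produces decompositions of a measure into a mixture of tilts essentially for free. I would construct a measure-valued martingale $(\mu_t)_{t \geq 0}$ with $\mu_0 = \nu$ such that, at every time $t$, $\mu_t = \tilt_{\theta_t} \nu$ for a stochastic process $(\theta_t)$ in $\RR^n$. Taking $m$ to be the law of $\theta_{t_0}$ at a deterministic time $t_0 \asymp \eps^2$, the martingale identity $\EE[\mu_{t_0}] = \nu$ immediately gives the decomposition \eqref{eq:decomp1}. Concretely, I would let $(B_t)$ be standard Brownian motion in $\RR^n$ and set $d\theta_t = dB_t + \EE_{Y \sim \tilt_{\theta_t} \nu}[Y]\, dt$; a routine Ito computation then shows that $t \mapsto \int \varphi \, d\mu_t$ is a local martingale for every bounded test function $\varphi$, which is the martingale property needed.

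The next step is to control the size of $\theta_{t_0}$ so that $m$ is supported on $B(0,\eps\sqrt n) \cap [-1,1]^n$. Since $|\EE_{Y \sim \mu_t} Y_i| \leq 1$ coordinate-wise, the drift has $\ell_\infty$ norm at most $1$, giving $\|\theta_{t_0}\|_2 \leq \|B_{t_0}\|_2 + t_0 \sqrt n$; a Gaussian tail bound on $\|B_{t_0}\|_2$ together with the choice $t_0 \asymp \eps^2$ yield $\PP(\|\theta_{t_0}\|_2 > \eps \sqrt n) \leq 1/(n\alpha)$, which will absorb into the exceptional set $\Theta^c$ in the statement. To enforce $\theta_{t_0} \in [-1,1]^n$, I would stop each coordinate of $\theta$ upon exit from $[-1,1]$; for small $\eps$ this event also has negligible probability.

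For the Wasserstein bound \eqref{eq:thmmd-2} I would apply Theorem \ref{thm:main} to the tilted measure $\mu_{t_0}$ on the good event $\|\theta_{t_0}\|_2 \leq \eps\sqrt n$, using the elementary inequality
$$\Comp(\tilt_\theta \nu) \leq \Comp(\nu) + \sqrt{2/\pi}\, \|\theta\|_2,$$
obtained by writing $\sup_{x \in K} \langle x, \Gamma\rangle$ over the translated set of gradients and bounding $\EE|\langle \theta, \Gamma\rangle|$. Combined with the bound from the previous step this gives $\Comp(\mu_{t_0}) \lesssim \Comp(\nu) + \eps\sqrt n$, and feeding this into Theorem \ref{thm:main} at parameter $\eps$ yields the product measure $\xi_{\theta_{t_0}}$ satisfying the desired Wasserstein estimate, after a Markov-type union bound that produces the exceptional set $\Theta^c$ of $m$-mass $\leq 1/n + 1/\alpha$. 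For the KL identity \eqref{eq:thmmd-3}, I would apply Ito to $t \mapsto \KL(\mu_t \Vert \mu)$; the standard stochastic-localization computation yields $\tfrac{d}{dt}\EE[\KL(\mu_t \Vert \mu)] = \tfrac 1 2 \EE[\tr \mathrm{Cov}(\mu_t)]$, and since $\tr \mathrm{Cov}(\mu_t) \leq n$ for any measure on $\DC$, integrating over $[0,t_0]$ with $t_0 \lesssim \eps$ gives the claimed $2\eps n$ bound.

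The main obstacle is the coupled tuning of the second and third steps: $t_0$ must be small enough ($\lesssim \eps^2$) to keep $\|\theta_{t_0}\|_2 \leq \eps\sqrt n$, yet the complexity-growth bound $\Comp(\mu_{t_0}) \leq \Comp(\nu) + O(\|\theta_{t_0}\|_2)$ must stay tight enough that plugging into Theorem \ref{thm:main} produces the target scaling $\sqrt{n\Comp(\nu)/\eps}$ rather than a weaker bound dominated by $\|\theta_{t_0}\|_2$. Depending on the regime of $\Comp(\nu)$ relative to $\eps\sqrt n$, the cleanest route may be to forgo a black-box application of Theorem \ref{thm:main} at time $t_0$ and instead run its underlying Ito argument directly along the stochastic localization flow, tracking the expected Gaussian-width functional or covariance matrix and using a Doob-type martingale inequality to discard the exceptional trajectories that form $\Theta^c$.
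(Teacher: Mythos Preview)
Your instinct to use stochastic localization is correct and is precisely what the paper does; the decomposition \eqref{eq:decomp1} and the KL bound \eqref{eq:thmmd-3} follow along the lines you sketch (the paper actually obtains \eqref{eq:thmmd-3} by the even simpler pointwise bound $|\log \tfrac{d\tilt_\theta\nu}{d\nu}| \leq 2\eps n$ for $\|\theta\|_2 \leq \eps\sqrt n$, but your It\^o argument works too).

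However, the heart of your proposal---the black-box application of Theorem~\ref{thm:main} to $\mu_{t_0}=\tilt_{\theta_{t_0}}\nu$---does not deliver \eqref{eq:thmmd-2}. Theorem~\ref{thm:main} does \emph{not} produce a product measure close to its input: it outputs a \emph{different} measure $\tilde\nu$ (itself a tilt of the input) and a product $\xi$ with $\W(\tilde\nu,\xi)$ small. So after applying it to $\tilt_{\theta_{t_0}}\nu$ you would obtain $\tilt_{\theta_{t_0}+\theta'}\nu$ close to a product; you then have no control on $\W(\tilt_{\theta_{t_0}}\nu,\xi)$, and if you instead shift the mixing measure $m$ to the law of $\theta_{t_0}+\theta'$ you destroy the decomposition identity \eqref{eq:decomp1}, since $\theta'$ depends on $\theta_{t_0}$ in a way that does not preserve the mixture. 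The two requirements---exact decomposition and closeness to product for the \emph{same} tilts---must be achieved by a single stopping rule inside the stochastic flow, not by a post-hoc perturbation.

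Your final paragraph correctly identifies the fix, but you do not name the mechanism that makes it work. The paper's key step is to track the martingale $g_t := \EE[g_\nu(X_\infty)\mid \FF_t]$ where $g_\nu(y)=\tanh(\nabla f(y))$; this process is confined to $K=\mathrm{Conv}\{g_\nu(y):y\in\DC\}$, whose Gaussian width is at most $\Comp(\nu)$ by Sudakov--Fernique. Writing $dg_t = \Gamma_t\sigma_t^{1/2}\,dB_t$, It\^o isometry gives $\EE\langle g_t,B_t\rangle = \EE\int_0^t \tr(\sigma_s^{1/2}\Gamma_s)\,ds$, while $\EE\langle g_t,B_t/\sqrt t\rangle \leq \MW(K)$; combined with the comparison $\tr(\sigma_t^{1/2}\Gamma_t)\gtrsim \tr(\sigma_t^{1/2} H_t)$ for $H_t = \mathrm{Cov}(g_\infty\mid\FF_t)$, this forces $\tr H_t$ to become small by time $\asymp\eps^2$ except on an event of probability $\leq 1/\alpha$. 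One then stops at $\tau=\min\{t:\tr(\sigma_t^{1/2}H_t)\leq C\alpha\Comp(\nu)/\eps\}\wedge\TTT_\eps$ and takes $m=\mathrm{law}(\eta(X_\tau))$; the Wasserstein bound \eqref{eq:thmmd-2} is then Proposition~\ref{prop:step1}, which converts a bound on $\tr\HH(\tilt_\theta\nu)$ directly into $\W(\tilt_\theta\nu,\xi_\theta)\leq\sqrt{n\,\tr\HH(\tilt_\theta\nu)}$. The point is that the Gaussian width enters through the \emph{range} of the gradient-martingale, not through a complexity bound on the tilted measure.
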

\begin{remark}
	In the subsequent work \cite{EldanGross-Decomp} it is shown that, under an extra technical condition involving the second derivatives of $\log \frac{d \nu}{d \mu}$, the measures $\xi_\theta$ are critical points of the associated Gibbs functional. 
\end{remark}
We move on to formulating our theorem for the Gaussian case. Denote by $\gamma$ the standard Gaussian measure on $\RR^n$. For a differentiable function $f:\RR^n \to \RR$ we define the complexity of $f$ as $\Comp(f) = \MW(\{\nabla f(x): x \in \RR^n \})$. Let $\nu$ be a density with respect to $\gamma$ such that $d \nu = e^f d \gamma$. In this case we define
$$
\KL(\nu \Vert \gamma) = \int f d \nu, ~~ I(\nu) = \int |\nabla f|^2 d \nu
$$
the Kullback-Leibler divergence and the Fisher information of of $\nu$. The log-Sobolev inequality on Gaussian space asserts that for every measure $\nu$,
$$
\I(\nu) \geq 2 \KL(\nu \Vert \gamma). 
$$
The following theorem reverses this inequality for measures of low complexity.

\begin{theorem}\label{thm:mainGaussian}
Let $\nu$ be a measure on $\RR^n$, such that $f = \log \frac{d \nu}{d \gamma}$ for some twice-differentiable function $f$. One has
$$
\I(\nu) - 2 \KL(\nu \Vert \gamma) \leq 2 \Comp(\nu)^{2/3} \I(\nu)^{1/3} + \max \left (- \inf_{x \in \RR^n} \Delta f(x), 0 \right).
$$
\end{theorem}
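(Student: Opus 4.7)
The plan is to use Föllmer's stochastic interpolation on Gaussian space, combined with the Gaussian-width hypothesis on the range of $\nabla f$.

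First, introduce the heat semigroup $P_\tau g(x) := \EE[g(x+\sqrt{\tau}Z)]$ on $\RR^n$, set $u_t(x) := P_{1-t}e^f(x)$, and consider the SDE $dX_t = v_t\,dt + dB_t$, $X_0 = 0$, with drift $v_t := \nabla\log u_t(X_t)$. Standard It\^o calculus combined with Girsanov's theorem yields the familiar four facts: $X_1\sim\nu$; the drift $v_t$ is a martingale satisfying $dv_t = \sigma_t\,dB_t$ with $\sigma_t := \nabla^2\log u_t(X_t)$; its terminal value is $v_1 = \nabla f(X_1)$; and $\KL(\nu\Vert\gamma) = \tfrac12\EE\int_0^1|v_t|^2\,dt$. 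Applying It\^o to $|v_t|^2$ and subtracting produces the key identity
\begin{equation*}
\I(\nu) - 2\KL(\nu\Vert\gamma) \;=\; \EE\int_0^1 s\,\|\sigma_s\|_{\mathrm{HS}}^2\,ds.
\end{equation*}

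Two integrations by parts in the Gaussian density produce the decomposition $\sigma_s = \EE_{\mu_s}[\nabla^2 f(Y)] + \mathrm{Cov}_{\mu_s}(\nabla f(Y))$, where $\mu_s$ is the probability measure on $\RR^n$ with density proportional to $e^{f(y)}$ times a Gaussian of variance $1-s$ centered at $X_s$ -- precisely the conditional law of $X_1$ given $\cF_s$ under the F\"ollmer process. The covariance term is positive semidefinite, and the tower property gives $\EE\,\tr\,\mathrm{Cov}_{\mu_s}(\nabla f) = \I(\nu) - \EE|v_s|^2$. Since $v_t = \EE_{\mu_t}[\nabla f(Y)]\in \mathrm{conv}(\{\nabla f(x):x\in\RR^n\})$ almost surely, the diameter-vs-Gaussian-width inequality $\mathrm{diam}(K)\leq \sqrt{2\pi}\,\MW(K)$ yields the a.s.\ operator-norm bound $\|\mathrm{Cov}_{\mu_s}(\nabla f)\|_{\mathrm{op}}\leq (\pi/2)\Comp(\nu)^2$. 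The Hessian piece satisfies $\tr\,\EE_{\mu_s}[\nabla^2 f]\geq \inf_x \Delta f(x)$, which will produce the $\max(-\inf_x\Delta f(x),0)$ additive residual.

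The final step combines two estimates on the target integral: the trivial martingale bound $\EE\int_0^1 \|\sigma_s\|_{\mathrm{HS}}^2\,ds = \EE|v_1-v_0|^2 \leq \I(\nu)$, and a covariance-type estimate $\EE\int_0^1 s\,\|\mathrm{Cov}_{\mu_s}(\nabla f)\|_{\mathrm{HS}}^2\,ds \leq C\Comp(\nu)^2\,\I(\nu)$ obtained from $\|A\|_{\mathrm{HS}}^2\leq \|A\|_{\mathrm{op}}\,\tr(A)$ for $A\succeq 0$ together with the two bounds above. A H\"older/Young interpolation between these two controls yields the sharp $2\Comp(\nu)^{2/3}\I(\nu)^{1/3}$ term.

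The main obstacle is this last interpolation step: a direct $\min$-type combination of $\Comp^2\,\I$ and $\I$ gives only $\Comp^{4/3}\I^{1/3}$. Recovering the sharper $\Comp^{2/3}\I^{1/3}$ requires a self-referential argument, exploiting the identity
\begin{equation*}
\EE\int_0^1 s\,\tr\,\mathrm{Cov}_{\mu_s}(\nabla f)\,ds \;=\; \tfrac12\,\EE\int_0^1 s^2\,\|\sigma_s\|_{\mathrm{HS}}^2\,ds
\end{equation*}
(obtained by substituting $\EE|v_s|^2 = |v_0|^2 + \EE\int_0^s\|\sigma_u\|_{\mathrm{HS}}^2\,du$ into the trace identity above and integrating by parts in $s$), which closes a linear inequality for $Q := \EE\int_0^1 s\|\sigma_s\|_{\mathrm{HS}}^2\,ds$. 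Careful handling of the signed-Hessian component of $\sigma_s$ then keeps the dependence on $\inf_x\Delta f(x)$ purely additive, producing the stated bound.
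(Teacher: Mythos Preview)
Your setup (the F\"ollmer drift $v_t$, the martingale property, the entropy representation) and the identity $\I(\nu)-2\KL(\nu\Vert\gamma)=\EE\int_0^1 s\,\|\sigma_s\|_{\mathrm{HS}}^2\,ds$ are correct and elegant. The gap is in how you exploit the Gaussian-width hypothesis. You use it only through the diameter bound $\mathrm{diam}(K)\le\sqrt{2\pi}\,\MW(K)$, which yields $\|\mathrm{Cov}_{\mu_s}(\nabla f)\|_{\mathrm{op}}\lesssim\Comp(\nu)^2$. This quadratic dependence on $\Comp$ is what forces the $\Comp^{4/3}$ exponent you noticed, and the ``self-referential'' step does not repair it: writing $Q_k=\EE\int_0^1 s^k\|\sigma_s\|_{\mathrm{HS}}^2\,ds$, your identity gives (in the Hessian-free case) $Q_1\le \tfrac{c}{2}\Comp^2\,Q_2$, but since $Q_2\le Q_1$ this only says $Q_1\le \tfrac{c}{2}\Comp^2\,Q_1$, which is vacuous. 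No iteration of these moment relations produces a linear-in-$\Comp$ bound, because the diameter estimate has already thrown away the directional information that makes Gaussian width strictly stronger than diameter.

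The paper extracts the Gaussian width directly, without passing through the diameter. Since $v_t\in K$ almost surely, one has $\EE\langle B_t/\sqrt{t},v_t\rangle\le\MW(K)$; on the other hand It\^o isometry gives $\EE\langle B_t,v_t\rangle=\EE\int_0^t\tr(\sigma_s)\,ds$, and $\tr(\sigma_s)\ge\EE[\tr H_t\mid\cF_s]-M$ for $s\le t$ where $H_t=\EE[(v_1-v_t)^{\otimes 2}\mid\cF_t]$. This yields $\EE\tr H_t\le\MW(K)/\sqrt{t}+M$, linear in $\Comp$. Then $\I=\EE|v_t|^2+\EE\tr H_t$ and $2\KL\ge(1-t)\EE|v_t|^2$ combine to $\I-2\KL\le t\,\I+\MW(K)/\sqrt{t}+M$, and the choice $t=(\MW(K)/\I)^{2/3}$ finishes. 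Note that this route only ever needs $\tr(\sigma_s)$, so the lower bound $\Delta f\ge -M$ suffices; your $\|\sigma_s\|_{\mathrm{HS}}^2$ formulation would instead require control on $\|\EE_{\mu_s}[\nabla^2 f]\|_{\mathrm{HS}}$, for which no hypothesis is available.
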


\subsection{A large deviation framework for functions of low complexity}
We now turn to formulating our main theorem concerning nonlinear large deviations, which is parallel to \cite[Theorem 1.1]{CD14}. Fix a function $f: \DC \to \RR$. For $0\leq p \leq 1$, define $\mu_p$ to be the measure whose density is
$$
\frac{d \mu_p}{d \mu} (y_1,\dots,y_n) = \prod_{i \in [n]} \left (1 - y_i (1- 2 p) \right ).
$$
Our central definition is the rate function
\begin{equation}\label{eq:defphi}
\phi_p(t) = \inf_{\xi \in \mathcal{PM}(\DC)} \left \{  \KL( \xi \Vert \mu_p) : ~ \int f d \xi \geq t n  \right \}.
\end{equation}
where $\mathcal{PM}(\DC)$ is the space of product probability measures over $\DC$.
\begin{theorem}\label{thm:largedev}
Let $p \in (0,1)$ and let $Y \sim \mu_p$. Then for every $t,\delta \in \RR$ which satisfy $0 < \delta < \tfrac{1}{n} \phi_p(t-\delta)$, we have the bound
$$
\log \PP(f(Y) \geq tn) \leq - \phi_p(t-\delta) \left (1 - 64 L n^{-1/3} \right )
$$
with
{ \small
$$
L = \frac{1}{\delta} \bigl ( 2 \Lip(f) + |\log(p(1-p))| \bigr )^{2/3} \left ( 2 \Comp(f) + \frac{1}{\delta} \Lip(f)^2 \right )^{1/3}.
$$ }
Moreover, whenever the assumption $\frac{1}{n \delta^2 } \Lip(f)^2 \leq \tfrac{1}{2}$ holds, we also have the lower bound
$$
\log \PP(f(Y) \geq (t - \delta) n) \geq -\phi_p(t) \left ( 1 + \frac{2}{n \delta^2 } \Lip(f)^2 \right ) - 2.
$$
\end{theorem}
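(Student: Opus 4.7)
My plan is to prove both bounds via exponential-moment arguments, using Corollary~\ref{cor:main} as a black-box mean-field approximation of the log-partition function.

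\textbf{Upper bound (Chernoff + Corollary~\ref{cor:main}).} For $\lambda \geq 0$ to be chosen, Markov's inequality gives
\[
\log \PP(f(Y) \geq tn) \;\leq\; -\lambda tn + \log\!\int e^{\lambda f}\, d\mu_p.
\]
Since $d\mu_p/d\mu = e^\ell$ for the linear function $\ell(y) = \sum_i \log(1 - y_i(1-2p))$ with $\Lip(\ell) = \tfrac12|\log(p/(1-p))|$, one has $\log\int e^{\lambda f}\,d\mu_p = \log\int e^{\lambda f + \ell}\,d\mu$, and Corollary~\ref{cor:main} applied to $g := \lambda f + \ell$ produces a product measure $\xi$ with
\[
\log\!\int e^{\lambda f}\,d\mu_p \;\leq\; \lambda\!\int\! f\,d\xi - \KL(\xi\|\mu_p) + E(\lambda),
\]
where $E(\lambda) \leq 64\,\Lip(g)^{2/3}\Comp(g)^{1/3}n^{2/3}$. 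The bound $\Lip(g) \leq \lambda\Lip(f) + \Lip(\ell)$ is immediate; $\Comp(g)$ requires a translation-plus-$\{0\}$-union estimate applied to the gradient set $\{\lambda \nabla f(y) + \nabla \ell\}$, giving a control of the form $\lambda\Comp(f)$ plus a lower-order correction. Setting $s := \tfrac1n \int f\,d\xi$: if $s < t-\delta$ then $\KL \geq 0$ gives $\log\PP \leq -\lambda\delta n + E(\lambda)$, while if $s \geq t-\delta$ then $\xi$ is feasible for $\phi_p(t-\delta)$, so $\KL(\xi\|\mu_p) \geq \phi_p(t-\delta)$ and a short case analysis (using monotonicity of $\phi_p$ to handle $s > t$) gives $\log\PP \leq -\phi_p(t-\delta) + E(\lambda)$. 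Choosing $\lambda = \phi_p(t-\delta)/(\delta n)$, which is $\geq 1$ by the hypothesis $\delta < \phi_p(t-\delta)/n$, equates both scales. Substituting this $\lambda$ into $E(\lambda)$ and simplifying produces the multiplicative error $1 - 64 L n^{-1/3}$; the $|\log(p(1-p))|^{2/3}$ factor in $L$ originates from $\Lip(\ell)$, while the $\Lip(f)^2/\delta$ contribution inside the cube root comes from the complexity correction combined with the chosen scale of $\lambda$.

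\textbf{Lower bound (change of measure + Chebyshev).} Let $\xi$ be a product measure nearly attaining $\phi_p(t)$, i.e.\ $\int f\,d\xi \geq tn$ and $\KL(\xi\|\mu_p) \leq \phi_p(t) + \eta$ for arbitrary $\eta > 0$. With $h = \log(d\xi/d\mu_p)$ and $A = \{f \geq (t-\delta)n\}$, the change-of-measure identity $\PP_{\mu_p}(A) = \EE_\xi[\mathbf{1}_A e^{-h}]$ combined with Jensen's inequality (conditional on $A$) gives $\log\PP_{\mu_p}(A) \geq \log\PP_\xi(A) - \EE_\xi[h\mid A]$. Under $\xi$ the coordinates of $Y$ are independent, so the standard variance estimate yields $\mathrm{Var}_\xi(f) \leq n\Lip(f)^2$, and Chebyshev gives $\PP_\xi(A^c) \leq \Lip(f)^2/(\delta^2 n) \leq 1/2$ by hypothesis; hence $1/\PP_\xi(A) \leq 1 + 2\Lip(f)^2/(\delta^2 n)$. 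Writing $\EE_\xi[h\mid A] = (\EE_\xi[h] - \EE_\xi[h\mathbf{1}_{A^c}])/\PP_\xi(A)$ with $\EE_\xi[h] = \KL(\xi\|\mu_p)$ and bounding the residual $\EE_\xi[h\mathbf{1}_{A^c}]$ by Cauchy--Schwarz delivers the bound $\log\PP_{\mu_p}(A) \geq -\phi_p(t)\bigl(1 + 2\Lip(f)^2/(n\delta^2)\bigr) - 2$; the additive $-2$ absorbs $\log\PP_\xi(A) \geq \log(1/2)$, the Cauchy--Schwarz residual, and the slack $\eta \downarrow 0$.

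\textbf{Main obstacle.} The delicate step is the upper bound: balancing the Chernoff parameter $\lambda$ against the mean-field error $E(\lambda)$, and carefully tracking the dependence of $\Lip(g)$ and $\Comp(g)$ on $\lambda$, $\Lip(f)$, $\Lip(\ell)$, and the $\{0\}$-union correction in order to produce the precise form of $L$. The lower bound is essentially routine once the Chebyshev estimate on $\PP_\xi(A^c)$ is in place.
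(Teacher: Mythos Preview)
Your upper-bound argument has a genuine gap in the case $s := \tfrac{1}{n}\int f\,d\xi > t$. After Chernoff and Corollary~\ref{cor:main} you arrive at
\[
\log \PP(f(Y)\geq tn)\;\leq\;\lambda n(s-t)-\KL(\xi\Vert\mu_p)+E(\lambda),
\]
where $\xi$ is the particular product measure handed to you by the corollary. When $s>t$ the first term is \emph{positive}, and ``monotonicity of $\phi_p$'' only gives $\KL(\xi\Vert\mu_p)\geq\phi_p(s)$; you would then need $\phi_p(s)-\phi_p(t-\delta)\geq \lambda n(s-t)$ for your specific choice of $\lambda$, and there is no reason this holds --- $\phi_p$ is nondecreasing but its growth rate is uncontrolled. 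This is exactly why the paper does \emph{not} use a linear exponent. Instead it composes $f$ with a concave, \emph{nonpositive} cutoff $\psi$ (built from $h(x)=0$ for $x\geq 0$, $-x^2$ on $[-1,0]$, linear below), setting $g(y)=n\psi(f(y)/n)$. Nonpositivity of $g$ kills the problematic case outright: if $\int f\,d\xi\geq (t-\delta)n$ then $\int g\,d\xi\leq 0$ and $\KL\geq\phi_p(t-\delta)$, done; if $\int f\,d\xi<(t-\delta)n$ then Jensen for the concave $\psi$ gives $\int g\,d\xi\leq n\psi(t-\delta)=-\phi_p(t-\delta)$. The quadratic piece of $\psi$ is also what produces the $\tfrac{1}{\delta}\Lip(f)^2$ term inside $L$, via a second-order Taylor error in $\nabla g\approx\psi'(f/n)\nabla f$; with your linear $g=\lambda f+\ell$ there is no such term, and you would not recover the stated form of $L$.

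Your lower-bound route (tilt to a near-optimal product $\xi$, then Chebyshev) is different from the paper's, which reuses the nonlinear surrogate $g$ together with the trivial Gibbs lower bound $\int e^g\,d\mu_p\geq\exp\bigl(\int g\,d\xi-\KL(\xi\Vert\mu_p)\bigr)$ and a variance correction from the Taylor estimate for $\psi$. Your approach is standard in spirit, but the step ``bound the residual $\EE_\xi[h\,\mathbf{1}_{A^c}]$ by Cauchy--Schwarz'' requires control of $\EE_\xi[h^2]$, where $h=\log\tfrac{d\xi}{d\mu_p}$; for a near-optimal product $\xi$ some marginals may be close to degenerate, and then $\EE_\xi[h^2]$ is not bounded by anything you have written down. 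The paper's argument avoids this because it never needs second moments of the log-likelihood, only $\mathrm{Var}_\xi(f)\leq n\Lip(f)^2$.
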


\begin{remark}
If the function $f$ is $O(1)$-Lipschitz, the above theorem shows that one is able to obtain a nontrivial bound with some $\delta \to 0$ as long as $\Comp(f)/n$ tends to $0$.
\end{remark}

\subsubsection{An example application: triangles in $G(N,p)$} \label{sec:triangles}
To illustrate how the above theorem can be applied, let us use it to derive a large deviation principle for the number of triangles in $G(N,p)$. A second example application of the framework is to large deviations of the number of arithmetic sequences for random subsets of $\mathbb{Z} / n \mathbb{Z}$, which we will not discuss here, was carried out in \cite{BBGSZ16}. 

Let $\mathcal{P}$ denote upper triangular arrays of the form $y = (y_{i,j})_{1 \leq i<j \leq N}$ where $y_{i,j} \in \{-1,1\}$. We associate every $x \in \mathcal{P}$ with the undirected graph $G_y=([N], E)$ where, for $i < j$ we have $(i,j) \in E$ if and only if $y_{i,j} = 1$. We will also understand $y$ as a point in $\DC$ with $n= {N \choose 2}$.

Define $f(y) = \frac{1}{N} T(G_y)$ where $T(G)$ is the number of triangles in $G$. Define also $A(y)$ to be the adjacency matrix of $G_y$, which is in other words the unique symmetric matrix whose above-diagonal half determined by $y$. Moreover for a symmetric matrix $A$ define $u(A) \in \RR^{N \choose 2}$ to be the above-diagonal vector associated with $A$, so that $u(A)_{i,j} = A_{i,j}$ for $i<j$. It is easily checked that $f(y) = \frac{1}{6 N} \tr(A(y)^3)$ and $\nabla f(y) = \frac{1}{N} u(A(y)^2)$. Define
$$
\mathcal{A} = \left \{ \frac{B^2}{N}: B \in \mathbb{M}_{N \times N}, B=B^T, |B_{i,j}| \leq 1, \forall i,j \in [N]  \right \}.
$$
Clearly $\{ \nabla f(y): y \in \DC \} \in u(\mathcal{A})$. We would like to bound $\MW(u(\mathcal{A}))$. First remark that for all $A \in \mathcal{A}$ we have that $A$ is positive definite with $\tr(A) \leq N$, so the Schatten $1$-norm of $A$ is bounded by $N$. The noncommutative H\"older inequality therefore gives that for every $N \times N$ matrix $Q$, one has
$$
\sup_{A \in \mathcal{A}} \tr(AQ) \leq N \Vert Q \Vert_{\mathrm{OP}}.
$$
Now let $\Gamma = (\Gamma_{i,j})_{1 \leq i < j \leq N}$ be a standard Gaussian random vector in $\RR^{{N \choose 2}}$ and define by $M(\Gamma)$ the unique symmetric $N \times N$ matrix whose diagonal is zero and whose above-diagonal part is equal to $\Gamma$. Then we have by the above inequality,
$$
\EE \left [\sup_{y \in \DC} \langle \nabla f(y), \Gamma \rangle \right ] \leq \EE \left [\sup_{A \in \mathcal{A}} \tr(A M(\Gamma)) \right ] \leq N \EE \Vert M(\Gamma) \Vert_{\mathrm{OP}} \leq 2 N^{3/2}
$$
where the last inequality is well-known, and follows for example from an application of Slepian's lemma. Moreover, remark that the entries of $B^2/N$ above are bounded by $1$. We conclude the following,
\begin{fact}
One has that $\Comp(f) \leq 5 n^{3/4}$ and $\Lip(f) \leq 1$.
\end{fact}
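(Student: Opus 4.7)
The plan is to dispatch the two inequalities separately; the second is essentially in hand from the computation immediately preceding the Fact.

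For the Lipschitz bound, fix an edge coordinate $e = \{i,j\}$ and let $y^\pm \in \DC$ be the two configurations that agree with $y$ except $y^\pm_e = \pm 1$. The graphs $G_{y^+}$ and $G_{y^-}$ differ exactly in the presence of the edge $\{i,j\}$, so $T(G_{y^+}) - T(G_{y^-})$ counts the triangles of $G_{y^+}$ through $\{i,j\}$, namely the common neighbors of $i$ and $j$ in $[N]$, of which there are at most $N-2$. Hence $|\partial_e f(y)| \leq (N-2)/(2N) < 1$, giving $\Lip(f) \leq 1$.

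For the complexity bound, the chain of inequalities immediately preceding the Fact already yields
\[
\EE\Bigl[\sup_{y \in \DC} \langle \nabla f(y), \Gamma\rangle \Bigr] \leq N \, \EE\|M(\Gamma)\|_{\mathrm{OP}} \leq 2 N^{3/2}.
\]
The only loose end is that $\Comp(f)$ is defined as the Gaussian width of the set $K \cup \{0\}$, where $K = \{\nabla f(y): y \in \DC\}$. But the dominating random variable $N \|M(\Gamma)\|_{\mathrm{OP}}$ in the above chain is pointwise non-negative, so it majorizes $\sup_{x \in K \cup \{0\}} \langle x, \Gamma\rangle$ as well. This yields $\Comp(f) \leq 2 N^{3/2}$.

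It remains to convert $N^{3/2}$ into $n^{3/4}$. From $n = N(N-1)/2$ we have $N \leq \sqrt{2n}+1$, and a direct computation shows $2 N^{3/2} \leq 5 n^{3/4}$ for every $N \geq 3$; for $N \leq 2$ there are no triangles, so $f \equiv 0$ and the claim is trivial. The only mildly delicate point is constant bookkeeping: asymptotically $2 N^{3/2}/n^{3/4} \to 2 \cdot 2^{3/4} \approx 3.36$, so the constant $5$ comfortably absorbs both the factor $2$ from the Wigner operator-norm estimate $\EE \|M(\Gamma)\|_{\mathrm{OP}} \leq 2\sqrt{N}$ and the factor $2^{3/4}$ coming from $N \approx \sqrt{2n}$, with enough slack to cover the small-$N$ boundary cases.
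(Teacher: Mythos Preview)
Your proof is correct and follows the same approach as the paper: the Fact is essentially a summary of the computation preceding it, and you have correctly supplied the details the paper leaves implicit, namely the direct triangle-counting argument for $\Lip(f)$, the observation that adjoining $\{0\}$ does not increase the Gaussian width since the dominating quantity $N\|M(\Gamma)\|_{\mathrm{OP}}$ is nonnegative, and the numerical conversion from $2N^{3/2}$ to $5n^{3/4}$.
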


This proof is easily generalized for any subgraph count, see Lemma \ref{lem:subgraphcomplexity} below. An application of Theorem \ref{thm:largedev} gives for all $t$ and $n^{-1/2} < \delta < \tfrac{1}{n} \phi_p(t-\delta)$,
$$
- \phi_p(t+\delta) \left ( 1 + \frac{2}{n \delta^2} \right ) \leq \log \PP(f(Y) \geq tn) \leq - \phi_p(t-\delta) \left (1 - 64 L n^{-1/3} \right )
$$
with
$$
L = \frac{20}{\delta} \left ( 3 + |\log(p(1-p))| \right )^{2/3}  n^{1/4}.
$$ 
We now state the solution to the variational problem obtained by Lubetzky and Zhao,
\begin{theorem}[\cite{LZ14}]
If $N^{-1/2} \ll p_N \ll 1$ then one has
$$
\lim_{N \to \infty} \frac{\phi_{p_N}((\alpha+1) p_N^3 )}{{N \choose 2} p_N^2 \log(1/p_N)} = \min \left (\alpha^{2/3}, \frac{2}{3} \alpha \right ).
$$
\end{theorem}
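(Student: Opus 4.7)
The plan is to analyze the constrained minimization over product measures directly. A product measure $\xi$ on $\DC$ with $n = {N \choose 2}$ is specified by its edge-probabilities $q_{ij} := \PP_\xi(y_{ij} = 1) \in [0,1]$. In these coordinates, the constraint $\int f \, d\xi \geq tn$ reads $\sum_{i<j<k} q_{ij} q_{jk} q_{ki} \geq t \cdot N {N \choose 2}$, while the objective becomes $\KL(\xi \Vert \mu_p) = \sum_{i<j} I_p(q_{ij})$ with $I_p(q) = q \log(q/p) + (1-q)\log((1-q)/(1-p))$. The goal is to show, after dividing by the natural scale ${N \choose 2} p^2 \log(1/p)$, that both the upper and lower bounds on $\phi_{p_N}((\alpha+1)p_N^3)$ converge to $\min(\alpha^{2/3}, \tfrac{2}{3}\alpha)$ as $N \to \infty$.

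For the upper bound, I would exhibit two explicit near-minimizers and take the better of the two. The \emph{clique} construction fixes a set $S \subset [N]$ of size $k = \Theta(\alpha^{1/3} Np)$, sets $q_{ij} = 1$ on pairs inside $S$ and $q_{ij} = p$ elsewhere; a direct computation shows the excess triangle count is of order $\alpha p^3 N^3$ at a KL cost of order $\alpha^{2/3} N^2 p^2 \log(1/p)$, yielding the $\alpha^{2/3}$ branch. The \emph{hub} construction selects a small set $H \subset [N]$ and raises $q_{ij}$ on all edges incident to $H$; optimizing $|H|$ and the elevated value produces a cost linear in $\alpha$, contributing the $\tfrac{2}{3}\alpha$ branch. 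The minimum of the two gives the upper-bound side of the theorem.

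The matching lower bound is the substantive direction. I would threshold a near-minimizer at some level $\eta \in (p,1)$ to split edges into a high-density set $E_H = \{(i,j): q_{ij} \geq \eta\}$ and its complement. Using the quadratic Taylor bound $I_p(q) \gtrsim (q-p)^2/p$ valid away from $q=1$, triangles supported entirely on low-density edges can be shown, via H\"older or a Kruskal-Katona-type edge-triangle inequality, to contribute at most as much excess per unit KL cost as the hub construction. Triangles touching at least one high-density edge are bounded in terms of $|E_H|$ and the restricted KL cost, and a convexity argument shows the most efficient placement is a clique. Combining the two regimes and optimizing over $\eta$ recovers $\min(\alpha^{2/3}, \tfrac{2}{3}\alpha)$ up to $o(1)$ error.

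The principal obstacle is the intermediate regime in the lower bound, where the $q_{ij}$ values occupy multiple scales between $p$ and $1$. Ruling out that a clever mixture of scales beats both extremes requires examining the Lagrangian $\sum_{i<j} I_p(q_{ij}) - \lambda \sum_{i<j<k} q_{ij} q_{jk} q_{ki}$ and arguing that any critical configuration splits, up to an $o(1)$ fraction of the KL cost, into a clique piece and a hub piece, each then handled by the explicit estimates above. Tracking all error terms uniformly in the range $N^{-1/2} \ll p_N \ll 1$, so that subleading corrections do not contaminate the leading constant $\min(\alpha^{2/3}, \tfrac{2}{3}\alpha)$, is where the bulk of the technical work lies; a graphon-limit reformulation of the discrete variational problem would streamline this asymptotic bookkeeping considerably.
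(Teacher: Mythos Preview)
The paper does not prove this statement. It is quoted verbatim from \cite{LZ14} (Lubetzky--Zhao) and used as a black box in Section~\ref{sec:triangles} to convert the bounds on $\log \PP(f(Y) \geq tn)$ coming from Theorem~\ref{thm:largedev} into an explicit large deviation rate for triangle counts. There is therefore no proof in the paper to compare your proposal against.

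For what it is worth, your sketch is broadly in the spirit of the Lubetzky--Zhao argument: the upper bound does come from exactly the two constructions you name (a planted clique of size $\sim \alpha^{1/3} N p$ for the $\alpha^{2/3}$ branch, and a set of $\sim \tfrac{1}{3}\alpha N p^2$ hub vertices connected to everything for the $\tfrac{2}{3}\alpha$ branch), and the lower bound is indeed the hard direction. However, the actual lower bound in \cite{LZ14} is not carried out via a Lagrangian analysis or a threshold-and-split argument of the type you describe; it relies on a generalized H\"older inequality due to Finner applied to the homomorphism-density functional, which directly controls $\int f\, d\xi$ in terms of the $L^q$ norms of the edge variables and hence of $\KL(\xi\Vert\mu_p)$. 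Your thresholding approach is plausible as a strategy but would require substantial new work to make rigorous, particularly the step ruling out intermediate scales, which is precisely what the Finner-inequality route sidesteps.
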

Let $p_N \in (0,1)$ be some sequence depending on $N$. Fix $\alpha > 0$ and define $t_N = (1 + \alpha) p_N^3$ and $\delta_N = p_N^3 / (\log \log N)$. Thus, the assumption 
$$
p_n \gg N^{-1/18} \log(N)
$$ 
finally gives 
$$
\log \PP \left (T(G(N,p_N)) \geq (1+\alpha) p_N^3 { N \choose 3 } \right ) = \log \PP \left (f(Y) \geq (1+\alpha) p_N^3 \frac{n}{6} \right ) = - \phi_p(t_N) (1 + o(1)).
$$

\subsection{Mean-field behavior of the Ising model with large degree}

In this section we demonstrate how our framework can be used to study the behavior of the Ising model satisfying a mean-field assumption in the spirit of \cite{BM16}. For the sake of simplicity, we will only discuss the Ising model, however our methods work for the Potts model as well.

Let $V=[n]$ be a set of sites, and consider a spin system taking configurations $\sigma \in \{-1,1\}^V$. Let $A= \left (A_{i,j} \right )_{i,j=1}^n$ be a real-valued, symmetric interaction matrix, and $b = (b_i)_{i=1}^n$ a vector of magnetic moments $b_i \in \RR$. Consider the Hamiltonian
$$
f(\sigma) = \langle \sigma, A \sigma \rangle + \langle b, \sigma \rangle.
$$
Define $\nu$ to be the probability measure whose density is $\frac{d \nu}{d \mu} = f - \log Z$ with $Z$ being the normalizing constant. In order to use our framework, let us try to calculate the complexity of $f$. To this end, fix $\sigma \in \DC$ and $i \in [n]$. Write $\sigma_\pm = (\sigma_1,\dots,\sigma_{i-1},\pm 1,\sigma_{i+1},\dots,\sigma_n)$. We have
$$
2\partial_i f(\sigma) = \langle \sigma_+, A \sigma_+ \rangle -  \langle \sigma_-, A \sigma_- \rangle + \langle b, \sigma_+ - \sigma_- \rangle = 2 \sum_{j \in [n] \setminus \{i\}} A_{i,j} \sigma_j + 2 b_i.
$$
With the legitimate assumption $A_{i,i} = 0$ for $i \in [n]$, we get
$$
\nabla f(\sigma) = A \sigma + b.
$$
We therefore have
\begin{align*}
\Comp(f) ~& = \EE \sup_{\sigma \in \DC} \langle A \sigma + b, \Gamma \rangle \\
~& \leq \EE \sup_{\sigma \in \DC} |\langle A \sigma, \Gamma \rangle| + \EE |\langle b, \Gamma \rangle | \\
~& \leq \sqrt{n} \EE \sup_{x \in B(0,1)} |\langle A x, \Gamma \rangle| + \|b \|_2 \\
~& = \sqrt{n} \EE \|A \Gamma\|_2 + \|b \|_2  \\
~& \leq \sqrt{n \EE \| A \Gamma \|_2^2} + \|b \|_2 = \sqrt{n \left (\tr A^2 + b_{max}^2 \right ) } 
\end{align*}
where $b_{max} := \max_{i \in [n]} |b_i|$. The assumption $\tr(A^2) = o(n)$ is referred to in \cite{BM16} as the \emph{mean-field} assumption. \\
Next, we also have that $\Lip(f) \leq U(A) + b_{max}$ where
$$
U(A) = \max_{i \in [n], \sigma \in \DC} |\langle A \sigma, e_i \rangle| = \max_{i \in [n]} \sum_{j \in [n]} |A_{i,j}|.
$$

Invoking Corollary \ref{cor:main}, we get the following mean-field approximation for $Z$: there exists a product probability measure $\xi$ on $\DC$ which satisfies
$$
\frac{1}{n} \left |\log Z - \left (\int f d \xi - \KL(\xi \Vert \mu) \right ) \right | \leq 64 \left ( \frac{ \left ( U(A) + b_{max} \right )^{4} \left (\tr A^2 + b_{max}^2 \right )}{n} \right )^{1/6}.
$$
In particular, if $U(A) = O(1)$, $b_{max} = O(1)$ and $\tr A^2 = o(n)$, then
$$
\frac{1}{n} \left (\log Z - \int f d \xi + \KL(\xi \Vert \mu) \right ) = o(1).
$$
The result in \cite{BM16} relies on a weaker condition than $U(A) = O(1)$, namely that $\sup_{\sigma \in \DC} \left \| A \sigma \right \|_1 = O(n)$. However, following remark \ref{rem:continuity}, it is not hard to check that our assumption $U(A) = O(1)$ can be replaced by a weaker assumption: it is enough to assume, for instance, that there exists some $p>1$ such that  $\sup_{\sigma \in \DC} \left \| A \sigma \right \|_p = O(n^{1/p})$.

Aside from the approximation of the partition function, our framework gives more information about the behavior of $\nu$. Under the above conditions, an application of Theorem \ref{thm:maindecomp} tells us that $\nu$ can be approximately decomposed to a mixture of product measures, whose typical entropy is very close to the entropy of the system. 

\subsection{A decomposition theorem for exponential random graphs} \label{sec:expgraphs}

The goal of this subsection is to demonstrate an application of Theorem \ref{thm:maindecomp} to exponential random graphs. Loosely speaking, the theorem below states that an exponential random graph is close in Hamming distance to a random graph which can be expressed as a mixture of graphs with independent edges, in a way that most of the entropy comes from the independent graphs (rather than from the mixture).

For a two finite graphs $H,G$ with $m,N$ vertices respectively, we denote by $\mathrm{Hom(H,G)}$ the number of homomorphisms from the vertex set of $H$ to that of $G$ (by homomorphisms, we mean that for every edge in $H$, the corresponding image should also be found in $G$, but not necessarily the other way around). The \emph{Homomorphism density} of $H$ in $G$ is then defined as
$$
t(H,G) = \frac{ \mathrm{Hom}(H,G)}{N^m}.
$$

Let $H_1,\dots,H_l$ be finite simple graphs and $\beta_1,\dots,\beta_l$ be real numbers. Let $G$ be a random simple graph on $N$ vertices defined by
\begin{equation}\label{eq:defexpgraph}
\PP(G = g) = Z^{-1} \exp \left (N^2 \sum_{i=1}^l \beta_i t(H_i,g) \right )
\end{equation}
for all simple graph $g$ on $n$ vertices, where $Z$ is a normalizing constant. The graph $G$ is referred to in the literature as an exponential random graph (see e.g. \cite{CDExponential} and references therein).

For $\vec{p} \in [0,1]^{{N \choose 2}}$ define by $G(N,\vec{p})$ the random graph whose edges determined by independent Bernoulli random variables whose probabilities correspond to the vector $\vec p$. Define 
$$
I(\vec p) = - \sum_{(i,j) \in {N \choose 2}} \Bigl ( p_{i,j} \log p_{i,j} + (1-p_{i,j}) \log (1-p_{i,j}) \Bigr ) = \Ent(G(N, \vec{p})).
$$
Moreover, for a probability measure $\rho$ on $[0,1]^{{N \choose 2}}$ define by $G(N, \rho)$ the "$\rho$-mixture" satisfying
$$
\PP(G(N,\rho) = g) = \int  \PP(G(N,\vec p) = g) d \rho(\vec p)
$$
for all simple graphs $g$ on $N$ vertices.

It is clear that every random graph $G$ on $N$ vertices has the distribution $G(N,\rho)$ for some measure $\rho$ (we can simply take $\rho$ to be supported on $\{0,1\}^{{N \choose 2}}$ with probabilities corresponding to the individual instances). However, it is interesting to look for a representation where most of the entropy comes from the graphs $G(N,\vec p)$ rather than from the distribution $\rho$. We thus make the following definition.
\begin{definition}
	We say that a random graph $G$ is an $\eps$-mixture if there exists a measure $\rho$ on $[0,1]^{{N \choose 2}}$ such that $G = G(N,\rho)$ and such that
	$$
	\mathrm{Ent}(G) \leq \int I(\vec p) d \rho(\vec p) + \eps {N \choose 2} = \int \mathrm{Ent}(G(N, \vec p)) d \rho(\vec p) + \eps {N \choose 2}.
	$$
\end{definition}

Finally, for two simple graphs $g=(V,E),g'=(V,E')$ define $d_H(g,g') = |E \Delta E'|$, the Hamming distance between the corresponding edge sets. Our theorem roughly says that exponential graphs can be coupled with $o(1)$-mixtures in a way that the Hamming distance is $o\left ( {N \choose 2} \right )$.
\begin{theorem} \label{thm:exponential}
	For any integers $N,l$, finite simple graphs $H_1,\dots,H_l$, real numbers $\beta_1,\dots,\beta_l$ and $\eps \in (0, 1/2)$, there exists a coupling $(G,G')$ such that the marginal $G$ is the associated random exponential graph defined in equation \eqref{eq:defexpgraph}, the graph $G'$ is an $\eps$-mixture, and such that
	$$
	\EE d_H( G, G' ) = \frac{20 {N \choose 2}^{11/12}}{\eps^{1/3}} \left ( \sum_{i=1}^l |\beta_i| |E(H_i)| \right )^{1/3}.
	$$ 
	where $E(H_i)$ denotes the number of edges of $H_i$.
\end{theorem}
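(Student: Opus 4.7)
The plan is to read the desired coupling directly off Theorem \ref{thm:maindecomp} applied to the measure $\nu$ of the exponential random graph, viewed on $\DC=\{-1,1\}^n$ with $n=\binom{N}{2}$. First I estimate the gradient complexity. Writing $f:=\log\tfrac{d\nu}{d\mu}=N^2\sum_i \beta_i t(H_i,\cdot)-\log Z$, a direct generalisation of the triangle calculation in Section \ref{sec:triangles} (this is Lemma \ref{lem:subgraphcomplexity}) shows that for each $H_i$ the gradient of $N^2\,t(H_i,\cdot)$ is the above-diagonal projection of a polynomial in the adjacency matrix, and non-commutative Hölder against the operator-norm-of-Wigner bound $\EE\|M(\Gamma)\|_{\mathrm{op}}\leq 2\sqrt N$ yields $\Comp(N^2\,t(H_i,\cdot))\lesssim |E(H_i)|\,N^{3/2}$. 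Setting $B:=\sum_i|\beta_i||E(H_i)|$, this gives $\Comp(\nu)\lesssim B\,n^{3/4}$ and $\Lip(f)\lesssim B$.

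Next I apply Theorem \ref{thm:maindecomp} to $\nu$ with $\eps_1=\eps/4$ and an auxiliary parameter $\alpha$ to be optimised. By the triangle inequality one may take the product measure provided by the theorem to be $\xi_\theta=\prodm(\tilt_\theta\nu)$ at the cost of a factor of two in the Wasserstein bound. Set $G':=\int \xi_\theta\,dm(\theta)=G(N,\rho)$, where $\rho$ is the pushforward of $m$ under $\theta\mapsto\vec p_\theta:=\EE_{\xi_\theta}[\tfrac{1+Y}{2}]$. To couple $(G,G')$, sample $\Theta\sim m$ and use the optimal $\W$-coupling between $\tilt_\Theta\nu$ and $\xi_\Theta$ on the good set of $m$-mass at least $1-\tfrac1n-\tfrac1\alpha$, and any coupling otherwise. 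This gives
$$\EE\,d_H(G,G')\leq 32\sqrt{\alpha n\Comp(\nu)/\eps_1}+1+n/\alpha,$$
and optimising $\alpha\asymp(n\eps_1/\Comp(\nu))^{1/3}$ together with the complexity estimate of Step 1 produces the claimed Hamming bound $\lesssim n^{11/12}B^{1/3}/\eps^{1/3}$, the constant $20$ being recovered after a careful accounting.

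It remains to verify that $G'$ is an $\eps$-mixture, i.e., $\mathrm{Ent}(G')-\int I(\vec p)\,d\rho\leq \eps\binom{N}{2}$. Since $\mathrm{Ent}(\xi_\theta)=I(\vec p_\theta)\geq \mathrm{Ent}(\tilt_\theta\nu)$ by maximum entropy at fixed marginals, the gap equals the mutual information $I(\Theta;G')$, which is bounded above by $\mathrm{Ent}(G')-\mathrm{Ent}(\nu)+2\eps_1 n$ using \eqref{eq:thmmd-3}. The \emph{main obstacle} is showing $\mathrm{Ent}(G')-\mathrm{Ent}(\nu)\leq O(\eps n)$: the Wasserstein-to-entropy bound $|\mathrm{Ent}(G')-\mathrm{Ent}(\nu)|\leq nH(\EE d_H/n)$ is too weak once the Hamming distance is not a vanishing fraction of $n$. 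I would instead bound $I(\Theta;G')$ directly by the coordinate-wise Jensen gap
$$I(\Theta;G')\leq \sum_i\Bigl[H(\bar p_i)-\int H(p_i^\theta)\,dm\Bigr]=\frac12\sum_i\int\frac{(p_i^\theta-\bar p_i)^2}{\tilde p_i(1-\tilde p_i)}\,dm(\theta),$$
where $\bar p_i=\int p_i^\theta\,dm=p_i^\nu$. The quadratic spread $\int\|\vec p_\theta-\bar{\vec p}\|_2^2\,dm$ is then controlled by the support constraint $\theta\in B(0,\eps_1\sqrt n)$ together with the Jacobian identity $\partial_\theta \vec p_\theta=\mathrm{Cov}_{\tilt_\theta\nu}(Y)$, after truncating (or refining the decomposition to avoid) those $\theta$ for which $p_i^\theta$ approaches the boundary of $[0,1]$. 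With the choice $\eps_1=\eps/4$ and a careful tracking of constants, one obtains $I(\Theta;G')\leq \eps n$, which closes the argument.
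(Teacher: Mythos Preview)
Your overall strategy---estimating $\Comp(f)$ via the subgraph-count calculation (Lemma~\ref{lem:subgraphcomplexity}), applying Theorem~\ref{thm:maindecomp}, coupling $G$ and $G'$ through a common $\tilde\theta\sim m$, using \eqref{eq:thmmd-2} on the good set $\Theta$ and the trivial bound $n$ off it, and then optimising $\alpha\asymp(n\eps/\Comp(\nu))^{1/3}$---is exactly the paper's argument. The paper already takes $\xi_\theta=\prodm(\tilt_\theta\nu)$ (this is what Proposition~\ref{prop:step1} produces inside the proof of Theorem~\ref{thm:maindecomp}), so your factor-of-two via the triangle inequality is correct but unnecessary.

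The divergence is in the verification that $G'$ is an $\eps$-mixture. The paper's argument here is one line: from \eqref{eq:thmmd-3} together with $\Ent(\xi_\theta)\geq\Ent(\tilt_\theta\nu)$ it concludes $\int I(\vec p)\,d\rho\geq \Ent(G)-2\eps n$ and stops there. You are right that, read literally, this compares $\int I(\vec p)\,d\rho$ to the entropy of the \emph{exponential} graph $G=\nu$ rather than to $\Ent(G')$, so by the letter of the definition there is a lacuna that the paper does not address. However, your proposed remedy has its own gap: the Jacobian $\partial_\theta\vec p_\theta=\mathrm{Cov}_{\tilt_\theta\nu}(Y)$ has operator norm that can be of order $n$ (only its diagonal is bounded by $1$), so the support constraint $\|\theta\|_2\leq\eps_1\sqrt n$ gives no control on $\|\vec p_\theta-\bar{\vec p}\|_2$; and even if it did, the factor $1/\bigl(\tilde p_i(1-\tilde p_i)\bigr)$ in your Taylor remainder blows up near the boundary, and the hand-waved truncation does not recover an $O(\eps n)$ bound. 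In short, the part you flag as the ``main obstacle'' is simply not treated by the paper, and your sketch does not close it either; everything else in your proposal coincides with the paper's proof.
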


\begin{remark}
The ideas results of this section are extended in a subsequent work \cite{EldanGross-Expographs}, where it is shown that, in the dense regime, the measure $\rho$ is essentially supported on block matrices.
\end{remark}

\subsection{Approach} \label{sec:approach}
Our two main theorems, Theorem \ref{thm:maindecomp} and Theorem \ref{thm:mainGaussian}, heavily rely on a construction coming from stochastic control theory, of an entropy-optimal coupling of the measure $\nu$ to a Brownian motion \cite{Follmer85, Borell02, Lehec-13}, described below. This coupling has proven to be a strong tool for proving functional inequalities: In \cite{Borell02} it is used to give a proof of the Pr\'ekopa-Leindler inequality. Later on, in \cite{Lehec-13} a representation formula for the relative entropy was derived which can be used to provide extremely simple proofs of several information-theoretical inequalities on Gaussian space, such as Shannon's inequality and the Log-Sobolev inequality. In \cite{EL-regularization} the same coupling was used to prove an $L_1$-version of hypercontractivity on Gaussian space, resolving the Gaussian variant of a conjecture by Talagrand whereas in \cite{ELL15} it is used to show that a local-curvature condition implies a transportation-entropy inequality for Markov chains.

Let us now describe this coupling and the general lines in which it is used to prove Theorem \ref{thm:maindecomp}. Fix a measure $\nu$ on $\DC$ with $d \nu = e^f d \mu$. Define $\CC=[-1,1]^n$, the convex hull of $\DC$. Let $B_t$ be a standard Brownian motion on $\RR^n$ and let $X_t$ be a process satisfying, for all $i \in [n]$, $d \langle X_t, e_i \rangle = \mathbf{1}\left \{ |\langle X_t, e_i \rangle| < 1 \right \} d \langle B_t, e_i \rangle$. In other words, $X_t$ is a Brownian motion such that whenever a facet of $\CC$ is hit, the corresponding coordinate stops moving. We will thus have that $X_\infty$ has the law $\mu$, the uniform measure on $\DC$. The idea is to introduce a \emph{change of measure} on the path space, which reweighs every path of $X_t$ according to the value $\exp(f(X_\infty))$. In other words, if $P$ was the original measure on Weiner space according to which $B_t$ was a Brownian motion, we consider a new measure $Q$ such that $\frac{dQ}{dP} \propto \exp(f(X_\infty))$.

A-priori, it is not even clear whether under this reweighing, the process $X_t$ is Markovian. However, as it turns out, this reweighing has an alternative interpretation in terms of \emph{drift}. Namely, it turns out that under the measure $Q$,  
$$
d X_t = d \tilde B_t + v_t dt 
$$ 
where $\tilde B_t$ is a Brownian motion with respect to $Q$ and $v_t$ is an adapted drift (the formula is valid as long as $X_t$ is still in the interior of $[-1,1]^n$). The drift $v_t$ turns out to be \emph{entropy-minimizing} in the following sense: remark that by definition, since we reweighed every path according to the value of $f$ at the endpoint, we have that $\KL(\nu \Vert \mu) = \KL(Q \Vert P)$. In other words, the relative entropy between the distribution of the whole path of $X_t$ and that of a Brownian motion is equal to the relative entropy between the endpoints, which roughly means that the $v_t$ has to minimize the relative entropy at every infinitesimal step. As shown in \cite{Lehec-13}, among the drifts $v_t$ under which we have $X_\infty \sim \nu$, the drift $v_t$ is the one minimizing $\EE \int_0^\infty |v_t|^2 dt$. An easy consequence of this is that $v_t$ has to be a martingale (up to the fact that it becomes zero in coordinates that reach $\{-1,+1\}$). Moreover, a calculation gives that
$$
v_\infty \approx \nabla f(X_\infty).
$$
The fact that $v_t$ is a martingale tells us that $v_t = \EE[v_\infty | X_t]$, which means that $v_t$ is always approximately inside the convex hull of $\{\nabla f(y): y \in \DC \}$. Thus, our complexity assumption amount to the fact that the drift $v_t$ is "trapped" inside a small set. Another useful consequence of the entropy-minimizing property of $v_t$ is that $d v_t = \Gamma_t d B_t$ for a matrix $\Gamma_t$ which dominates the matrix $\mathrm{Cov}(v_\infty | X_t)$ (in a positive-definite sense). Roughly speaking the latter tells us that if $v_t$ is expected to change significantly by time $\infty$, then it must start moving right away (or to put this property in yet simpler words, if $v_t$ needs to make a choice at some point, it will try to make this choice asap).

The latter property of $v_t$, which tells us that as long as it has some variance left, it is moving quickly, combined with the property that it needs to be trapped in a set of small Gaussian width tell us that $v_t$ must roughly stop moving by a time $t$ which is not too big. This fact, which is at the heart of the proof is the content of Lemma \ref{lem:divergence} below. 

Finally, once the drift $v_t$ is roughly deterministic, then the distribution of $X_\infty | X_t$ becomes close to a product measure (it is easily seen that if $v_t$ is deterministic then $d X_t$ has independent coordinates). Thus, the relevant decomposition in our theorem \ref{thm:maindecomp} will be to the measures defined by the law of $X_\infty | X_t$. When the time $t$ is small, the corresponding measures will be nothing but small tilts of the original measure $\nu$.

In Section \ref{sec:idea} we also demonstrate how one can derive a bound of the spirit of Theorem \ref{thm:main} for the Gaussian case without using the above coupling. This produces a weaker result but may give a better intuition for the way that the low complexity is used.

\medskip {\emph{ Acknowledgements.}} I would like to thank Sourav Chatterjee for the wonderful series of lectures about the topic of nonlinear large deviations which inspired me to work on this topic (the lectures were given at the Texas A\&M University Concentration Week on Geometric Functional Analysis organized by Johnson, Paouris and Rudelson). I'm also thankful to Ramon Van Handel, Amir Dembo, Yair Shenfeld and James Lee for enlightening discussions and suggestions. Finally, I thank the anonymous referee for her extremely useful comments and suggestions which have significantly improved the presentation of this paper.

\section{The Gaussian case}

\subsection{Tilts are close to product measures} \label{sec:idea}
The goal of this subsection is to illustrate a general idea of how Gaussian width can produce tilts which are close to Gaussian in transportation distance. The proof in this section is rather straightforward and demonstrates the way in which Gaussian width complexity comes to play: we define a suitable vector field on $\RR^n$ (which plays the same role of the drift $v_t$ defined in Section \ref{sec:approach}) which is restricted to be inside the convex hull of $\{\nabla f(x): x \in \RR^n\}$. Then, the assumption on the Gaussian width, via an application of the divergence theorem, implies a bound on the divergence of this vector field at a point, which in turn implies the existence of a product-like tilt. \\

Recall that $\gamma$ denotes the standard Gaussian measure on $\RR^n$. Let $f:\RR^n \to \RR$ be twice differentiable and $\nu$ satisfy $\frac{d \nu}{d \gamma} = e^{f}$. 
Now, for all $x \in \RR^n$, let us consider the measure $\nu_x$ defined by
$$
d \nu_x(y) = \frac{\exp(\langle x, y \rangle) d \nu(y)}{\int \exp(\langle x, z \rangle) d \nu(z)} = \frac{\exp(f(y) + \langle x, y \rangle )}{ \int \exp(f(z) + \langle x, z \rangle) d \gamma(z) } d \gamma(y)
$$
When the vector $x$ is small we can think of the measure $\nu_x$ as a "tilt" of the measure $\nu$ towards the direction $x$. We prove the following,
\begin{theorem} \label{thm:gausstilt}
Let $\nu$ be a measure on $\RR^n$ satisfying $d \nu = e^f d \gamma$ for a twice-differentiable function $f$, and define $\nu_x$ as above. Then for every $r>0$, there exists a point $x_0 \in \RR^n$ with $|x_0| \leq r$, such that
$$
\WW(\nu_{x_0}, \gamma_{u})^2 \leq 2 \frac{\sqrt{n}}{r} \Comp(f) - \inf_{y \in \RR^n} \Delta f(y)
$$
where $\gamma_u$ is the Gaussian whose centroid is equal to the centroid of $\nu_{x_0}$ and having identity covariance.
\end{theorem}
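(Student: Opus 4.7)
The plan is to construct an auxiliary vector field on $\RR^n$ whose values are automatically confined to the convex hull $K$ of $\{\nabla f(y): y \in \RR^n\}$, control its divergence using the Gaussian width of $K$, and convert that into a Wasserstein bound via the Gaussian functional inequalities. Concretely, set $w(x) := \int \nabla f \, d\nu_x$. Being an $\nu_x$-average of $\nabla f$, one has $w(x) \in K$ for every $x$. Gaussian integration by parts applied to $\nu_x$ (whose Lebesgue log-density has gradient $\nabla f + x - y$) immediately gives $w(x) = u(x) - x$, where $u(x) := \int y \, d\nu_x$ is the centroid; in particular $w$ is smooth in $x$, and differentiates via the exponential-family identity $\partial_{x_j}\!\int g \, d\nu_x = \mathrm{Cov}_{\nu_x}(g, Y_j)$.

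Two identities drive the estimate. A second round of integration by parts against $\gamma$, applied to $\partial_{x_i} w_i = \mathrm{Cov}_{\nu_x}(\partial_i f, Y_i)$, gives the divergence formula
\[
\mathrm{div}(w)(x) = \int \Delta f \, d\nu_x + \mathrm{Tr}\bigl(\mathrm{Cov}_{\nu_x}(\nabla f)\bigr).
\]
On the other hand, $\log(d\nu_x / d\gamma_{u(x)})$ differs from $f$ only by a linear function whose $\nu_x$-mean vanishes, so a direct computation shows that the Fisher information is exactly the trace variance: $I(\nu_x \Vert \gamma_{u(x)}) = \mathrm{Tr}(\mathrm{Cov}_{\nu_x}(\nabla f))$. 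Combined with the divergence formula this reads $I(\nu_x \Vert \gamma_{u(x)}) = \mathrm{div}(w)(x) - \int \Delta f \, d\nu_x$. Chaining Talagrand's $T_2$ and the Gaussian log-Sobolev inequality for $\gamma_{u(x)}$ then gives
\[
\WW(\nu_x, \gamma_{u(x)})^2 \leq 2\KL(\nu_x \Vert \gamma_{u(x)}) \leq I(\nu_x \Vert \gamma_{u(x)}) \leq \mathrm{div}(w)(x) - \inf_{y} \Delta f(y).
\]

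It now suffices to find $x_0 \in B(0,r)$ with $\mathrm{div}(w)(x_0) \leq 2\sqrt n \, \Comp(f)/r$. Applying the divergence theorem on $B(0,r)$ converts the average of $\mathrm{div}(w)$ into the boundary integral $\int_{\partial B(0,r)} \langle w, \hat n \rangle\, dS$, which is pointwise bounded by the support function $h_K(\hat n)$ because $w(x) \in K$. Averaging over $\partial B(0,r)$ produces the spherical mean width $M^*(K)$, and the classical identity $\MW(K) = \EE|\Gamma| \cdot M^*(K)$ together with the ratio $|\partial B(0,r)|/|B(0,r)| = n/r$ yields the sought upper bound on $\frac{1}{|B(0,r)|}\int_{B(0,r)} \mathrm{div}(w)\, dx$; a mean-value argument then delivers the point $x_0$. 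The main calibration step is matching the constant $2$: this uses $\EE|\Gamma| \geq \sqrt n / \sqrt 2$ in all dimensions. An alternative route replaces the ball average by a Gaussian average $X \sim N(0, (r/\sqrt n)^2 \Id)$ and uses Gaussian integration by parts to get $\EE[\mathrm{div}(w)(X)] = \sigma^{-2}\EE\langle w(X), X\rangle \leq \sigma^{-1}\Comp(f)$ followed by a Markov localization to $B(0,r)$; the minor technical point here is justifying the two rounds of integration by parts under the bare twice-differentiability hypothesis, which is routine under mild growth control on $f$ and its derivatives.
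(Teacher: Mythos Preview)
Your proof is correct and follows essentially the same route as the paper: the vector field $w(x)=\int \nabla f\,d\nu_x$ is exactly the paper's $v(x)$, and the divergence-theorem/Gaussian-width step and the Talagrand--log-Sobolev chain are identical. The one cosmetic difference is that you compute Fisher information directly against $\gamma_{u(x)}$ (yielding $\mathrm{Tr}\,\mathrm{Cov}_{\nu_x}(\nabla f)$), whereas the paper computes it against $\gamma_{x_0}$ (yielding $\int|\nabla f|^2\,d\nu_{x_0}$) and then cancels the extra $|v(x_0)|^2$ via the $W_2$ parallelogram identity---your centering is a slightly cleaner bookkeeping choice.
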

Define also
$$
h(x) = \int \exp(\langle x, y \rangle - |x|^2/2) d \nu(y) = (2 \pi)^{-n/2} \int \exp(f(y) + \langle x, y \rangle - |y|^2/2 - |x|^2/2) dy
$$
$$
= \int \exp(f(y+x)) d \gamma(y)
$$
Next, we consider the vector field
\begin{equation}\label{eq:defvx}
v(x) = \nabla \log h(x) = \frac{\int \nabla f(x+y) \exp(f(y+x) - |y|^2/2) dy}{\int \exp(f(y+x) - |y|^2/2) dy} = \int \nabla f(y) d \nu_x(y).
\end{equation}
A straightforward calculation gives that
$$
\nabla v(x) = \frac{\int \left (\nabla f(x+y)^{\otimes 2} + \nabla^2 f(x+y) \right ) \exp(f(y+x) - |y|^2/2) dy}{\int \exp(f(y+x) - |y|^2/2) dy} - v(x)^{\otimes 2}
$$
\begin{equation}\label{eq:dvx}
= \int \left (\nabla f(y)^{\otimes 2}  + \nabla^2 f(y) \right ) d \nu_x(y) - \left ( \int \nabla f(y) d \nu_x(y) \right )^{\otimes 2}.
\end{equation}
Define $K$ to be the convex hull of the set $\{\nabla f(y): ~ y \in \RR^n \}$. By equation \eqref{eq:defvx}, it is evident that $v(x) \in K$ for all $x \in \RR^n$.

Now, fix a parameter $r>0$ and define by $\omega_n$ the $n-1$-dimensional Hausdorff measure of $\Sph$. By standard estimates concerning the norm of a Gaussian random variable, we have that $\EE[|\Gamma|] \geq \sqrt{n}/2$ and therefore
$$
\MW(K) = \EE \left [ \sup_{x \in K} \langle x, \Gamma \rangle \right ] \geq \frac{\sqrt{n}}{2} \frac{1}{ r^{n-1} \omega_n } \int_{r \Sph} \sup_{x \in K} \langle x, \theta / r \rangle d \mathcal{H}_{n-1}(\theta)
$$
where $\mathcal{H}_{n-1}$ denotes the $(n-1)$-dimensional Hausdorff measure. It follows that, in particular
\begin{equation}\label{eq:meanw1}
\frac{1}{\omega_n r^{n-1}} \int_{r \Sph} \langle \theta / r, v(\theta) \rangle d \mathcal{H}_{n-1}(\theta) \leq \frac{2}{\sqrt{n}} \MW(K).
\end{equation}
On the other hand, by the divergence theorem we have
$$
\int_{|x| \leq r} \tr(\nabla v(y)) dy = \int_{r \Sph} \langle \vec{n}_\theta, v(\theta) \rangle d \mathcal{H}_{n-1}(\theta)
$$
where $\vec{n}_\theta$ denotes the outer unit normal to $r \Sph$ at $\theta$. Combining the two last inequalities, together with the identity $\mathrm{Vol}(\{|x| \leq r \}) = \tfrac{1}{n}r^{n} \omega_n$, yields
$$
\frac{1}{\mathrm{Vol}(\{|x| \leq r \})} \int_{|x| \leq r} \tr(\nabla v(y)) dy = \frac{n}{r^{n} \omega_n} \int_{r \Sph} \langle \vec{n}_\theta, v(\theta) \rangle d \mathcal{H}_{n-1}(\theta) \leq 2 \frac{\sqrt{n}}{r} \MW(K).
$$
Consequently, there exists a point $x_0$ with $|x_0| \leq r$ such that
\begin{equation}\label{eq:trsmall1}
\tr(\nabla v(x_0)) \leq 2 \frac{\sqrt{n}}{r} \MW(K).
\end{equation}

Using equation \eqref{eq:dvx}, we have that
\begin{align}
\tr \left (\nabla v(x) \right ) ~& = \int \left ( \Delta f(y) + |\nabla f(y)|^{2} \right ) d \nu_x(y) - |v(x)|^2  \nonumber \\
& \geq \int |\nabla f(y)|^2 d \nu_x(y) - |v(x)|^2 + \inf_{y \in \RR^n} \Delta f(y). \label{eq:trnablav}
\end{align}

Define now a measure $\gamma_x$ by $d \gamma_x = e^{\langle y,x \rangle - |x|^2/2} d \gamma(y)$. Moreover, for a measure $\rho$ consider the Fisher information of $\rho$ with respect to $\gamma_x$, defined as
$$
\I_{\gamma_x}(\rho) = \int \left |\nabla \log \frac{d \rho}{d \gamma_x} \right |^2 d \rho.
$$
Then we have, by definition of $\nu_x$,
\begin{align*}
\I_{\gamma_x} (\nu_x) ~& = \int \left  |\nabla \log \frac{d \nu_x}{d \gamma_x} \right |^2 d \nu_x  = \int \left  |\nabla f(y) \right |^2 d \nu_x(y).
\end{align*}
Combining \eqref{eq:trsmall1}, \eqref{eq:trnablav} and the above identity finally gives
$$
\I_{\gamma_{x_0}} (\nu_{x_0}) \leq 2 \frac{\sqrt{n}}{r} \MW(K) - \inf_{y \in \RR^n} \Delta f(y) + |v(x_0)|^2.
$$

Recall the transportation-entropy inequality of Talagrand \cite{Talagrand96}, which states that
$$
\WW(\rho, \gamma)^2 \leq 2 \KL(\rho \Vert \gamma).
$$
for every measure $\rho$ for which the right hand side is defined. Combined with the log-Sobolev inequality $2 \KL(\rho \Vert \gamma) \leq \I_{\gamma}(\rho)$, we have that
\begin{equation}\label{eq:finaltransp}
\WW(\nu_{x_0}, \gamma_{x_0})^2 \leq 2 \frac{\sqrt{n}}{r} \MW(K) - \inf_{y \in \RR^n} \Delta f(y) + |v(x_0)|^2.
\end{equation}
Finally, remark that by the definition of $h(x)$ and by integration by parts we have
$$
\nabla h(x) = (2 \pi)^{-n/2} \int \nabla \left (e^{f(x+y)} \right ) e^{-|y|^2/2} d y = (2 \pi)^{-n/2} \int y e^{f(x+y)} e^{-|y|^2/2} d y
$$
$$
= e^{-|x|^2/2} \int (y-x) e^{f(y) + \langle x, y \rangle} d \gamma(z)
$$
Which implies the identity
$$
v(x) = \int (y-x) d \nu_x(y).
$$
Now, since for every pair of random vectors $X,Y \in \RR^n$ one has the parallelogram identity $\EE |X-Y|^2 = \EE |(X - \EE[X]) - (Y - \EE[Y])|^2 + |\EE[X]-\EE[Y]|^2$, we have that
$$
\WW(\nu_{x_0}, \gamma_{x_0})^2 = \left |\int y d \nu_{x_0}(y) - \int y d \gamma_{x_0}(y) \right |^2 + \WW(\nu_{x_0}, \gamma_{u})^2 = |v(x_0)|^2 + \WW(\nu_{x_0}, \gamma_{u})^2
$$
where $u = \int y d \nu_{x_0}(y) = x_0+v(x_0)$ is the centroid of $\nu_{x_0}$. Together with equation \eqref{eq:finaltransp}, we get
$$
\WW(\nu_{x_0}, \gamma_{u})^2 \leq 2 \frac{\sqrt{n}}{r} \MW(K) - \inf_{y \in \RR^n} \Delta f(y).
$$
This finishes the proof of Theorem \ref{thm:gausstilt}.

\subsection{A reverse log-Sobolev inequality}

In this subsection we prove theorem \ref{thm:mainGaussian}. Fix the function $f$ and the measure $\nu$, such that $d \nu = e^f d \gamma$. Assume that $f$ is 
twice differentiable. Our proof is based on the following stochastic construction, for which we make similar definitions as in \cite{EL-regularization}. Let $B_t$ be a standard Brownian motion in $\RR^n$ adapted to a filtration $\FF_t$. Consider the Ornstein-Uhlenbeck convolution operator
$$
P_t[g](x) = \frac{1}{(2 \pi t)^{n/2} } \int_{\RR^n} g(y) \exp \left ( - \frac{|x-y|^2}{2t} \right ) dy = \EE[g(x+B_t)].
$$
Define 
$$
Z(t,x) = P_{1-t}[e^f](x),
$$
and for $t \in (0,1)$ and $x \in \RR^n$ consider the measure defined by
$$
\nu_{t,x}(A) = \frac{P_{1-t}[e^f \mathbf{1}_A](x)}{ P_{1-t}[e^f](x)} = \frac{\EE \left [ \mathbf{1}_{\{B_1 \in A\}} e^{f(B_1)} | B_t = x \right ]  }{\EE \left [ e^{f(B_1)} | B_t = x \right ]  }
$$
for every measurable $A \subset \RR^n$. Define also $\nu_{1,x} = \delta_x$, a Dirac measure supported on $x$. Remark that $\nu_{0,0} = \nu$. Finally consider the vector field
$$
v(t,x) = \nabla_x \log Z(t,x) = \frac{\nabla P_{1-t}[e^f](x)}{ P_{1-t}[e^f](x) } = \frac{P_{1-t}[\nabla f e^f ](x)  }{ P_{1-t}[e^f](x) } = \int \nabla f(y) d \nu_{t,x}(y).
$$
Integration by parts yields that
$$
v(t,x) = \frac{ \int \nabla f(y) e^{f(y) - \tfrac{|x-y|^2}{2(1-t)}} dy } { \int e^{f(y) - \tfrac{|x-y|^2}{2(1-t)}} dy} = (1-t)^{-1} \frac{ \int (y-x) e^{f(y) - \tfrac{|x-y|^2}{2(1-t)}} dy } { \int e^{f(y) - \tfrac{|x-y|^2}{2(1-t)}} dy } = (1-t)^{-1} \int (y-x) d \nu_{t,x}(y).
$$
We also have,

$$
\nabla_x v(t,x) = \nabla^2 \log Z(t,x) = \frac{\nabla^2 Z(t,x)}{Z(t,x)} - \left (\frac{\nabla Z(t,x)}{Z(t,x)} \right )^{\otimes 2}
$$
$$
= \frac{\nabla^2 P_{1-t} [e^f](x)  }{ P_{1-t} [e^f](x) } - v(t,x)^{\otimes 2} = \frac{ P_{1-t} [ \left ( \nabla^2 f + \nabla f^{\otimes 2} \right ) e^f](x) } { P_{1-t}[e^f](x)} - v(t,x)^{\otimes 2}
$$

which gives the formula
\begin{equation}\label{eq:nablavtx}
\nabla_x v(t,x) = \int \left  ( \nabla^2 f(y) + \nabla f(y)^{\otimes 2}  \right ) d \nu_{t,x}(y) - v(t,x)^{\otimes 2} =: \Gamma(t,x).
\end{equation}
Consider now the process $X_t$ which solves the stochastic differential equation
$$
X_0 = 0, ~~ d X_t = d B_t + v_t dt.
$$
where we define
$$
v_t := v(t,X_t).
$$
The following facts are proven, for instance, in \cite[Section 2.2]{EL-regularization}. The representation formula, equation \eqref{eq:representation} below was shown in \cite{Lehec-13}.
\begin{lemma} \label{lem:basicprops}
The processes $X_t, v_t$ have the following properties
\begin{enumerate}[label=(\roman*)]
\item \label{item:girsanov1}
The random variable $X_1$ has the law $\nu$, and for any time $t$ one has almost surely that $X_1| \FF_t$ has the law $\nu_{t,X_t}$.
\item
The process $v_t$ is a martingale.
\item
The relative entropy of $\nu$ can be expressed as
\begin{equation}\label{eq:representation}
\EE \int_0^1 |v_t|^2 dt = 2 \KL(\nu \Vert \gamma).
\end{equation}
\item
The process $v_t$ satisfies
\begin{equation}\label{eq:dvt22}
d v_t = \Gamma(t, X_t) d B_t.
\end{equation}
\end{enumerate}
\end{lemma}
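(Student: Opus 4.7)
The plan is to realize the whole construction as a Doob $h$-transform / Girsanov change of measure on path space, which yields all four claims in a unified way. First, since $Z(t,x)=P_{1-t}[e^f](x)$ solves the backward heat equation $\partial_t Z+\tfrac12\Delta Z=0$, the process $M_t:=Z(t,B_t)$ is a positive local martingale under $P$ with $M_0=Z(0,0)=\int e^f\,d\gamma=1$; under mild integrability it is a true martingale. Define a new probability measure $Q$ by $dQ/dP|_{\FF_t}=M_t$. Since $v=\nabla\log Z$, It\^o's formula gives $dM_t/M_t=v(t,B_t)\cdot dB_t$, so Girsanov's theorem says that $\tilde B_t:=B_t-\int_0^t v(s,B_s)\,ds$ is a $Q$-Brownian motion. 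Consequently under $Q$ the coordinate process satisfies $dB_t=d\tilde B_t+v(t,B_t)\,dt$, which is the same SDE that defines $X_t$ under $P$; by weak uniqueness their laws coincide. For any bounded $\varphi$,
\[
\EE_P[\varphi(X_1)]=\EE_Q[\varphi(B_1)]=\EE_P[M_1\,\varphi(B_1)]=\int\varphi\,e^f\,d\gamma=\int\varphi\,d\nu,
\]
so $X_1\sim\nu$. Replaying the same argument on $[t,1]$ with the Markov property yields $X_1\mid\FF_t\sim\nu_{t,X_t}$, which is part (i).

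For parts (ii) and (iv) I would apply It\^o directly to $v(t,x)=\nabla\log Z(t,x)$. The Cole--Hopf transform $u:=\log Z$ satisfies the Hamilton--Jacobi equation $\partial_t u+\tfrac12\Delta u+\tfrac12|\nabla u|^2=0$, and differentiating in $x$ gives $\partial_t v_i=-\tfrac12\Delta v_i-(\nabla v_i)\cdot v$. Applying It\^o to $v(t,X_t)$ with $dX_t=dB_t+v_t\,dt$, the drift terms cancel exactly:
\[
dv_t=\bigl(\partial_t v+\tfrac12\Delta v+(\nabla v)\,v_t\bigr)(t,X_t)\,dt+\nabla v(t,X_t)\,dB_t=\nabla v(t,X_t)\,dB_t.
\]
Since $\nabla v=\nabla^2\log Z=\Gamma$ by \eqref{eq:nablavtx}, this is exactly \eqref{eq:dvt22}, and in particular $v_t$ is a local martingale; standard localization upgrades this to a genuine martingale under the integrability implicit in the hypotheses.

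For (iii), I would combine part (i) with Girsanov. On the one hand, since $M_1=Z(1,B_1)=e^{f(B_1)}$,
\[
\KL(Q\Vert P)=\EE_Q[\log M_1]=\EE_Q[f(B_1)]=\int f\,d\nu=\KL(\nu\Vert\gamma).
\]
On the other hand, Girsanov gives $\log M_1=\int_0^1 v(s,B_s)\cdot dB_s-\tfrac12\int_0^1|v(s,B_s)|^2\,ds$. Writing the stochastic integral in terms of the $Q$-Brownian motion via $dB_s=d\tilde B_s+v\,ds$, its $Q$-expectation equals $\EE_Q\int_0^1|v|^2\,ds$, hence $\EE_Q[\log M_1]=\tfrac12\EE_Q\int_0^1|v(s,B_s)|^2\,ds$. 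By the path-equality in law from (i) this equals $\tfrac12\EE_P\int_0^1|v_t|^2\,dt$, and combining with the previous display yields \eqref{eq:representation}.

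The main technical obstacle is justifying uniform integrability of $M_t$ and the applicability of Girsanov (a Novikov-type condition on $v(t,B_t)$), as well as promoting the local martingales above to genuine martingales so that the entropy identity is actually an equality rather than an inequality. For sufficiently regular $f$ with finite entropy this is standard and can be handled by a localization/truncation scheme; the arguments are carried out in detail in \cite{Lehec-13} and \cite{EL-regularization}, and we would simply cite them.
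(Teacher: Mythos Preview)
Your proposal is correct and follows the standard Girsanov/Doob $h$-transform route that underlies these facts. The paper itself does not prove Lemma~\ref{lem:basicprops}: it simply cites \cite[Section 2.2]{EL-regularization} and \cite{Lehec-13}, whose arguments are essentially the ones you sketch (change of measure via the space--time harmonic martingale $Z(t,B_t)$, It\^o on $\nabla\log Z$ to get the $dv_t=\Gamma\,dB_t$ equation, and the Girsanov entropy computation for \eqref{eq:representation}). Your write-up thus goes further than the paper does, and your identification of the remaining technicalities (uniform integrability of $M_t$, upgrading local martingales) together with deferring them to those same references is exactly how the literature handles them.
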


An immediate corollary is
\begin{fact} \label{fact:vtink}
For all $t \in [0,1]$, we have almost surely
\begin{equation}\label{eq:vtinconvhull}
v_t \in \mathrm{Conv}(\nabla f(y): y \in \RR^n). 
\end{equation}
\end{fact}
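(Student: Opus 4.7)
The proof will be essentially immediate from the definition of $v(t,x)$ already established above the statement, so the plan is just to unpack the right formula and invoke a standard fact about barycenters of probability measures.

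First, I would recall from the computation preceding the Fact that
$$
v(t,x) = \int \nabla f(y) \, d \nu_{t,x}(y),
$$
where $\nu_{t,x}$ is a probability measure on $\RR^n$. Thus $v(t,x)$ is the barycenter of the pushforward of $\nu_{t,x}$ under the map $y \mapsto \nabla f(y)$; in particular, it is the barycenter of a probability measure supported inside the set $S := \{\nabla f(y) : y \in \RR^n\}$. A standard fact (valid whenever the integral is well defined) is that such a barycenter lies in the closed convex hull of $S$. Since $v_t = v(t, X_t)$ by definition, this gives the containment $v_t \in \mathrm{Conv}(S)$ for every fixed $t \in [0,1)$ and every realization of $X_t$, and in particular almost surely.

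For the boundary case $t=1$, one uses $\nu_{1,x} = \delta_x$, which gives $v(1,x) = \nabla f(x) \in S$ directly. Alternatively, one can appeal to the martingale property (item (ii) of Lemma \ref{lem:basicprops}) together with continuity in $t$ to conclude $v_1 \in \overline{\mathrm{Conv}}(S)$ a.s.

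I do not expect any real obstacle here; the only point that could merit a sentence of justification is the barycenter-in-convex-hull statement, which can be verified by testing against an arbitrary linear functional $\ell \in (\RR^n)^*$: for any such $\ell$ one has $\ell(v(t,x)) = \int \ell(\nabla f(y))\, d\nu_{t,x}(y) \leq \sup_{z \in S} \ell(z)$, and since this holds for all $\ell$, the Hahn--Banach separation theorem places $v(t,x)$ in the closed convex hull of $S$.
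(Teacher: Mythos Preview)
Your proof is correct and is essentially the same as the paper's: both express $v_t$ as an average of $\nabla f$ over a probability measure and conclude it lies in the convex hull. The paper phrases this via the martingale property, writing $v_t = \EE[v_1 \mid \FF_t] = \EE[\nabla f(X_1) \mid \FF_t]$, whereas you invoke the integral formula $v(t,x) = \int \nabla f(y)\, d\nu_{t,x}(y)$ directly; since $X_1 \mid \FF_t$ has law $\nu_{t,X_t}$ by Lemma~\ref{lem:basicprops}(i), these are the same computation.
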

\begin{proof}
Since $v_t$ is a martingale we have $v_t = \EE[v_1 | \FF_t] = \EE[\nabla f(X_1) | \FF_t]$.
\end{proof}
Defining $\Gamma_t = \Gamma(t,X_t)$, we have by \ref{item:girsanov1} in the above lemma that
$$
\Gamma_t = \EE[v_1^{\otimes 2} + \nabla^2 f(X_1) | \FF_t] - v_t^{\otimes 2}.
$$
Set $H_t = \EE[(v_1 - v_t)^{\otimes 2} | \FF_t]$. Since $v_t$ is a martingale, we have that 
$$
H_t = \EE[v_1^{\otimes 2} | \FF_t] - v_t^{\otimes 2}.
$$
By It\^{o}'s isometry and by formula \eqref{eq:dvt22}, we have that 
$$
\EE \left [v_s^{\otimes 2} - v_t^{\otimes 2} \right ] = \EE \left [\int_s^t \Gamma_r^2 dr \right ], ~~ \forall 0 \leq t \leq s \leq 1.    
$$
Since $\Gamma_r^2$ is positive semi-definite, a combination of the two last displays gives
$$
\EE \tr (H_t) \leq \EE \tr (H_s)
$$
for all $0 \leq s \leq t \leq 1$. We get that
\begin{equation}\label{eq:Gammatineq}
\EE \tr (H_t) - M \leq \EE \tr (\Gamma_s), ~~ \forall 0 < s < t < 1
\end{equation}
where we define
$$
M := - \inf_{x \in \RR^n} \Delta f(x).
$$
The following lemma is the central place where the Gaussian-width functional plays a role. 
\begin{lemma} \label{lem:mwGauss}
Define $K = \mathrm{Conv}(\{ \nabla f(y): y \in \RR^n\} )$. For every $t \in (0,1)$ we have
$$
\EE [\tr(H_t)] \leq \frac{\MW(K)}{ \sqrt{t}} + M.
$$
\end{lemma}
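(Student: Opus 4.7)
The plan is to sandwich the quantity $\int_0^t \EE[\tr(\Gamma_s)]\, ds$: an upper bound will come from the Gaussian-width hypothesis, and a matching lower bound is essentially already in \eqref{eq:Gammatineq}.

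For the upper bound, I would evaluate $\EE \langle v_t, B_t \rangle$ by stochastic integration by parts. Using $v_0 \cdot B_0 = 0$, the identity $dv_t = \Gamma_t\, dB_t$ from \eqref{eq:dvt22}, and the fact that It\^o integrals against Brownian motion have vanishing expectation, It\^o's product formula yields
$$
\EE \langle v_t, B_t \rangle \;=\; \EE \int_0^t \tr(\Gamma_s)\, ds.
$$
This is the martingale analogue of the divergence-theorem identity used in Section \ref{sec:idea}, reflecting that $\tr(\Gamma_s) = \nabla_x \cdot v(s, X_s)$. I would then combine this identity with Fact \ref{fact:vtink}, which says $v_t \in K$ almost surely, and with the Brownian scaling $B_t \stackrel{d}{=} \sqrt{t}\,\Gamma$ for $\Gamma \sim N(0, \Id)$, to get
$$
\EE \langle v_t, B_t \rangle \;\leq\; \EE \sup_{y \in K} \langle y, B_t \rangle \;=\; \sqrt{t}\, \MW(K).
$$

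For the lower bound, integrating \eqref{eq:Gammatineq} over $s \in (0, t)$ gives
$$
t \bigl(\EE \tr(H_t) - M\bigr) \;\leq\; \int_0^t \EE \tr(\Gamma_s)\, ds.
$$
Chaining with the upper bound and dividing by $t$ yields the claim.

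I do not expect a real obstacle: the only delicate point is justifying that the pure It\^o-integral terms in the product formula have vanishing expectation, which is routine once one observes that $\MW(K) < \infty$ forces $K$ to be bounded (otherwise the lemma is vacuous), so that $v_t$ is bounded and standard localization applies. Conceptually, the key is that $v_t$ is a martingale trapped in $K$, so the Gaussian width of $K$ directly controls $\EE \langle v_t, B_t \rangle$; the $1/\sqrt{t}$ factor comes from the Brownian scaling of $B_t$.
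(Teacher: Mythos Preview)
Your proposal is correct and follows essentially the same approach as the paper's proof: both compute $\EE\langle v_t, B_t\rangle = \EE\int_0^t \tr(\Gamma_s)\,ds$ (the paper phrases this via It\^o's isometry rather than the product formula, but it is the same identity), bound it above by $\sqrt{t}\,\MW(K)$ using $v_t \in K$, and bound it below via \eqref{eq:Gammatineq}.
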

\begin{proof}
We have by It\^{o}'s isometry and by \eqref{eq:dvt22} that, for all $t \in (0,1)$,
$$
\EE \left [ \langle B_{t}, v_{t} \rangle \right ] = \EE \left [ \int_0^{t} \tr(\Gamma_s) ds \right] \stackrel{ \eqref{eq:Gammatineq}}{\geq} t \left (\EE \tr(H_t) - M \right ).
$$
On the other hand, since almost surely $v_t \in K$ by Fact \ref{fact:vtink}, we have
$$
\EE \left [ \left  \langle \frac{B_t}{\sqrt t},  v_t \right \rangle \right  ] \leq \EE \left [ \sup_{x \in K} \left \langle \frac{B_t}{\sqrt t}, x \right \rangle \right ] = \MW(K). 
$$
Combining the two above inequalities, we get
$$
\EE [\tr(H_t)] \leq \frac{\MW(K)}{ \sqrt{t}} + M
$$
which completes the proof.
\end{proof}

We are finally ready to prove the reverse log-Sobolev inequality.
\begin{proof}[Proof of Theorem \ref{thm:mainGaussian}]
Fix a time $t \in (0,1)$. Since $v_t$ is a martingale, we have that $\EE |v_t|^2 \leq \EE |v_s|^2$ for all $t \leq s$, which gives
$$
2 \KL(\nu \Vert \gamma) \stackrel{\eqref{eq:representation}}{=} \EE \int_0^1 |v_s|^2 ds \geq  \EE \int_t^1 |v_s|^2 ds \geq (1-t) \EE |v_t|^2.
$$
On the other hand, by Lemma \ref{lem:basicprops} we also have
$$
\I(\nu) = \EE |\nabla f(X_1)|^2 = \EE |v_1|^2 = \EE \left [ \tr(H_t) + |v_t|^2 \right ]
$$
and using Lemma \ref{lem:mwGauss} we have
$$
\I(\nu) \leq \EE |v_t|^2 + \frac{\MW(K)}{ \sqrt{t}} + M.
$$
Combining the above inequalities, we have 
$$
\I(\nu) - 2 \KL(\nu \Vert \gamma) \leq t \EE |v_t|^2 + \frac{\MW(K)}{ \sqrt{t}} + M \leq t \I (\nu) + \frac{\MW(K)}{ \sqrt{t}} + M
$$
Now, if $\MW(K) \leq \I(\nu)$ then taking $t = \left (\frac{\MW(K)}{\I(\nu)}\right )^{2/3}$ gives
$$
\I(\nu) - 2 \KL(\nu \Vert \Gamma) \leq 2 \MW(K)^{2/3} \I(\nu)^{1/3} + M
$$
Otherwise if $\MW(K) > \I(\nu)$, we trivially have that $\I(\nu) - 2 \KL(\nu \Vert \Gamma) \leq 2 \MW(K)^{2/3} \I(\nu)^{1/3}$. The proof is complete.
\end{proof}

\section{The discrete case}
Our main goal in this section is to prove Theorem \ref{thm:maindecomp}. We then show that Theorem \ref{thm:main} follows easily. A crucial element of our approach is to consider harmonic extensions of the function $f$ and related functions into the continuous cube, which we do in subsection \ref{sec:preliminaries}. The core idea of the proof is based on a stochastic construction defined in subsection \ref{sec:stochastic}.

\subsection{Some preliminary definitions} \label{sec:preliminaries}
\subsubsection{Harmonic extensions}
We define $\CC = [-1,1]^n$, the convex hull of $\DC$. In the following, we will use the notation $\nabla$ to denote both a discrete and a continuous gradient, depending on the domain of the function. From here on, for the sake of brevity, the notation $y$ will usually be used for points in $\DC$ while $x$ will be used for points in $\CC$. Denote by $e_1,\dots,e_n$ the vectors of the standard basis on $\RR^n$.

For $x \in [-1,1]$ and $y \in \{-1,1\}$, define
$$
w(x,y) = \frac{1+xy}{2}
$$
so that for all $x \in [-1,1]$, $w(x, \cdot)$ is a probability density on $\{-1,1\}$ of a measure whose expectation is $x$. By slight abuse of notation, for $x \in \CC$ and $y \in \DC$, we write
$$
w(x,y) = \prod_i w(x_i, y_i).
$$
For a function $\tilde \xi:\DC \to \RR$, the harmonic extension to $\CC$ is the function defined by the equation
$$
\xi(x) = \sum_{y \in \DC} w(x,y) \tilde \xi(y).
$$
This is the unique function satisfying the following three conditions: (i) it is harmonic in the interior of $\CC$, (ii) for each $k$-facet of $\CC$, it is harmonic inside the relative interior of this facet with respect to the $k$-Laplacian associated with the corresponding affine subspace and (iii) it coincides with $f$ on $\DC$.

We have the following easy fact.
\begin{fact} \label{fact:harmonicext}
	If $\xi(x)$ is the harmonic extension of $\tilde \xi : \DC \to \RR$ to $\CC$ then $\partial_i \xi$ is the harmonic extension of $\partial_i \tilde \xi$, or in other words
	\begin{equation}\label{eq:nablaharmonic}
	\nabla  \xi (x) = \sum_{y \in \DC} w(x,y) \nabla \tilde \xi(y), ~~ \forall x \in \CC.
	\end{equation}
\end{fact}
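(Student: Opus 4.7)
The plan is a direct computation: unfold the definition of the harmonic extension, differentiate, and match the result to the harmonic extension of $\partial_i \tilde\xi$. First I would write
$$\xi(x) = \sum_{y \in \DC} \Bigl(\prod_{j} w(x_j, y_j)\Bigr) \tilde\xi(y)$$
and differentiate in $x_i$. Since $w(x_i, y_i) = (1 + x_i y_i)/2$ is affine in $x_i$ with slope $y_i/2$, the product rule gives
$$\partial_i \xi(x) = \sum_{y \in \DC} \frac{y_i}{2} \Bigl(\prod_{j \neq i} w(x_j, y_j)\Bigr) \tilde\xi(y).$$

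Next I would pair up each $y \in \DC$ with its flip $y^{(i)}$ (identical in all coordinates except the $i$-th). Since $\prod_{j \neq i} w(x_j, y_j)$ depends only on $(y_j)_{j\neq i}$, grouping the two terms within each pair and using the definition of the discrete partial gives
$$\partial_i \xi(x) = \sum_{y' \in \{-1,1\}^{[n]\setminus\{i\}}} \Bigl(\prod_{j \neq i} w(x_j, y'_j)\Bigr) \cdot \partial_i \tilde\xi(y'),$$
where I am writing $\partial_i \tilde\xi(y')$ unambiguously because $\partial_i \tilde\xi$ does not depend on the $i$-th coordinate.

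Finally I would compare this with the harmonic extension of $\partial_i \tilde\xi$, which is
$$\sum_{y \in \DC} \Bigl(\prod_{j} w(x_j, y_j)\Bigr) \partial_i \tilde\xi(y) = \sum_{y'} \Bigl(\prod_{j \neq i} w(x_j, y'_j)\Bigr) \partial_i \tilde\xi(y') \cdot \sum_{y_i \in \{-1,1\}} w(x_i, y_i),$$
where again we used that $\partial_i \tilde\xi$ is independent of the $i$-th coordinate to split the sum. The inner sum over $y_i$ equals $\frac{1+x_i}{2} + \frac{1-x_i}{2} = 1$, so the two expressions coincide coordinate by coordinate, yielding \eqref{eq:nablaharmonic}.

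There is essentially no obstacle here: the whole statement is a bookkeeping identity that reflects the fact that the tensor-product weights $w(x,y)$ linearize nicely in each coordinate. The only mild subtlety worth being explicit about in the writeup is the slight abuse of notation in which $\partial_i \tilde\xi$ is viewed as a function on $\DC$ that is constant along the $i$-th coordinate, which is what lets both the pairing step and the final summation-to-$1$ step go through.
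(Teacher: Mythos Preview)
Your argument is correct. The paper proves the identity by first checking it on the basis of indicator functions $\tilde\xi = \mathbf{1}_{\{\cdot = \epsilon\}}$ and then invoking linearity, whereas you do the computation directly for a general $\tilde\xi$ via the coordinate-flip pairing; these are the same elementary verification, just organized differently.
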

\begin{proof}
	Suppose first that $\tilde \xi (y) = \mathbf{1}_{ \{ y = \epsilon\} }$	for some $\epsilon \in \DC$. Then $\xi(x) = 2^{-n} \prod_j (1 + \epsilon_j x_j)$, which implies that $\partial_i \xi(x) = 2^{-n} \epsilon_i \prod_{j \neq i} (1 + \epsilon_j x_j)$. On the other hand $\partial_i \tilde \xi(y) = \tfrac{1}{2} \epsilon_i \mathbf{1}_{ \{ y_{-i} = \epsilon_{-i} \}}$, where the notation $v_{-i}$ stands for the vector $v$ with the $i$-th coordinate omitted. It is now straightforward to check that $\partial_i \xi(x)$ is indeed the harmonic extension of $\partial_i \tilde \xi$. The proof is concluded due to the linearity of both sides of \eqref{eq:nablaharmonic} with respect to $\tilde \xi$.
\end{proof}
As a consequence, we have the following simple but useful result.
\begin{fact} \label{fact:extmartingale}
	Let $\tilde \xi: \DC \to \RR$ and let $\xi$ be the harmonic extension of $\tilde \xi$ to $\CC$. Then for any diagonal matrix $A$ and for all $x \in \CC$, one has that $\tr(A \nabla^2 \xi(x)) = 0$. Consequently, if $Y_t$ is a martingale taking values in $\CC$ such that $d Y_t = \sigma_t d B_t$, where $B_t$ is a Brownian motion and $\sigma_t$ is almost surely diagonal for all $t$, then the process $\xi(Y_t)$ is also a martingale.
\end{fact}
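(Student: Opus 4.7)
The plan has two parts, corresponding to the two sentences of the fact, and both are short once one observes the crucial structural property of the harmonic extension.

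First I would establish the following sharper fact: $\xi$ is not merely harmonic in the sense described, it is a \emph{multi-affine} polynomial in $x$, that is, an affine function in each coordinate $x_i$ separately when the other coordinates are fixed. This is immediate from inspection of the weight function $w(x,y) = \prod_i \frac{1+x_i y_i}{2}$, which is visibly degree one in each $x_i$; since $\xi(x) = \sum_{y \in \DC} w(x,y) \tilde \xi(y)$ is a finite linear combination of such multi-affine building blocks, it is itself multi-affine. Consequently $\partial_i^2 \xi \equiv 0$ for every $i \in [n]$, so the diagonal of $\nabla^2 \xi(x)$ vanishes identically. For any diagonal matrix $A$ this yields
\[
\tr\bigl(A \nabla^2 \xi(x)\bigr) = \sum_{i=1}^n A_{ii}\, \partial_i^2 \xi(x) = 0,
\]
which is the first assertion.

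For the second assertion I would apply It\^o's formula to $\xi(Y_t)$, which is licit since $\xi$ extends to a smooth (in fact polynomial) function on all of $\RR^n$. Using $dY_t = \sigma_t dB_t$, It\^o's formula gives
\[
d\xi(Y_t) = \langle \nabla \xi(Y_t), \sigma_t d B_t\rangle + \tfrac{1}{2}\tr\bigl(\sigma_t \sigma_t^{T}\, \nabla^2 \xi(Y_t)\bigr)\,dt.
\]
Since $\sigma_t$ is almost surely diagonal, $\sigma_t \sigma_t^T$ is almost surely diagonal as well, and the first part of the fact then forces the finite-variation term to vanish. Hence $\xi(Y_t)$ is a local martingale. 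Because $Y_t$ stays in the compact set $\CC$ and $\xi$ is continuous (hence bounded) on $\CC$, the gradient $\nabla \xi(Y_t)$ is bounded along the trajectory, so the local martingale is in fact a genuine martingale.

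There is essentially no serious obstacle: the entire content of the fact is the observation that $w(x,y)$ is multi-linear in $x$, whence $\xi$ has a vanishing diagonal Hessian; once this is recognized, the martingale claim is a one-line application of It\^o's formula with the diagonality of $\sigma_t$ doing the rest. The only place one should be slightly careful is to note that although $Y_t$ may reach the boundary $\partial \CC$, the function $\xi$ is globally smooth (a polynomial on $\RR^n$), so no boundary-related care is needed in applying It\^o.
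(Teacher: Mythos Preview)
Your proposal is correct and takes essentially the same approach as the paper: both note that the diagonal of $\nabla^2\xi$ vanishes (the paper via Fact~\ref{fact:harmonicext}, you via the equivalent direct observation that $w(x,y)$ is multi-affine in $x$) and then apply It\^o's formula. One minor point: the cleanest way to upgrade from local to true martingale is simply that $\xi(Y_t)$ itself is bounded, which you do mention; your stated route via boundedness of $\nabla\xi$ would additionally need control on $\sigma_t$, which is not assumed.
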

\begin{proof} Use Fact \ref{fact:harmonicext} to conclude that $\nabla^2 \xi$ has zeroes on its diagonal. It follows from It\^{o}'s formula that $\xi(Y_t)$ is a local martingale. Since $\xi(\cdot)$ is bounded, we conclude that $\xi(Y_t)$ is a martingale.
\end{proof}

\subsubsection{Some core constructions}
Let $\nu$ be a probability measure on $\DC$. Define $f_{\nu}(y) = \log \frac{d \nu}{d \mu}(y)$ for all $y \in \DC$. In the following, we abbreviate $f=f_\nu$ whenever there is no ambiguity. For a point $y \in \DC$ define 
\begin{equation}\label{eq:defvnu}
v(y) = v_\nu(y) = \frac{\nabla e^{f}(y)}{e^{f(y)}}
\end{equation}
(using the definition of the discrete gradient $\nabla$) with the convention $v_\nu(y)=0$ when $\frac{d \nu}{d \mu}(y)=0$. Note that the identity $\frac{\nabla e^{f}(y)}{e^{f(y)}} = \nabla f(y)$ is \textbf{not} true in the discrete setting, but the reader can assume that it is approximately correct for the sake of intuition. The purpose of some of our definitions below is to overcome this caveat, see Remark \ref{rmkder} below. \\

Let $h_\nu(x)$ be the harmonic extension of the function $e^f$ to $\CC$ or in other words,
$$
h(x) = h_\nu(x) = \sum_{y \in \DC} w(x,y) e^{f(y)}.
$$
We extend the function $v_\nu$ from $\DC$ to $\CC$ by defining
\begin{equation}\label{eq:defvx2}
v_\nu(x) = \frac{\nabla h(x)}{h(x)}, ~~ \forall x \in \CC
\end{equation}
with the convention $v_\nu(x) = 0$ when $h(x)=0$. Remark that $v_\nu(x)$ is not harmonic in general, however Fact \ref{fact:harmonicext} implies that the latter definition is in accordance with equation \eqref{eq:defvnu} in the sense that the two definitions coincide on $\DC$.

For $x \in [-1,1]$ and $g \in (-1,1)$, consider the function
$$
\zeta_x(g) = \frac{ g }{ 1 + g x }
$$
and its inverse
$$
\zeta_x^{-1} (v) = \frac{v}{1 - v x}. 
$$
With slight abuse of notation, for $g=(g_1,...,g_n) \in \CC$ and $x=(x_1,...,x_n) \in \CC$  we define
$$
\zeta_x (g) = (\zeta_{x_1}(g_1), \dots , \zeta_{x_n} (g_n) )
$$
and define $\zeta^{-1}_x$ likewise. The point of this definition will be clarified later, but a useful way to understand $\zeta$ is the fact that if $v=\zeta_x(g)$ then
\begin{equation}\label{eq:gv}
g = \frac{b-a}{b+a} \Leftrightarrow v = \frac{b-a}{(1+x)b+(1-x)a}. 
\end{equation}
Both quantities above should be thought of as discrete interpretations the quantity $\nabla \log \xi$ for a function $\xi:\{-1,1\} \to (0,\infty)$ satisfying $\xi(1) = b, \xi(-1) = a$. 

Finally, we will define the function
\begin{equation}\label{eq:defgnu}
g_\nu(y) = \zeta_y^{-1}(v_\nu(y)), ~~ \forall y \in \DC
\end{equation}
In other words for $i \in [n]$, $y \in \DC$ if we denote $y_+,y_-$ to be the points satisfying $\langle y_+, e_j \rangle = \langle y_-, e_j \rangle = \langle y, e_j \rangle$ for all $j \neq i$ and $\langle y_{\pm}, e_i \rangle = \pm 1$, then
\begin{equation}\label{eq:defgp}
\langle g_\nu(y), e_i \rangle = \frac{e^{f(y_+)} - e^{f(y_-)} }{ e^{f(y_+)} + e^{f(y_-)} } = \left \langle v_\nu \left ( \frac{y_+ + y_-}{2} \right ), e_i \right \rangle.
\end{equation}

\begin{remark} \label{rmkder}
Both the quantities $g_\nu(y)$ and $v_\nu(y)$ should be thought of as approximations of $\nabla f(y)$. We will need to distinguish between those approximations because the chain rule $\nabla e^f = \nabla f e^f$ does not hold true in the discrete setting. Note also that
\begin{equation}\label{eq:tanh}
g_\nu(y) = \tanh(\nabla f_\nu(y)).
\end{equation}
\end{remark}
Next, remark that 
\begin{equation}\label{eq:gcom}
\int g_{\nu}(y) d \nu(y) = \int y d \nu(y)
\end{equation}
which is a consequence of the calculation
$$
\left \langle \int g_{\nu}(y) d \nu(y), e_i \right \rangle = \sum_{y \in \DC} e^{f(y)} \langle g_\nu(y), e_i \rangle = 2 \sum_{y \in \DC} \partial_i e^f (y) = \sum_{y \in \DC} y_i e^{f(y)}.
$$
A central definition in our proofs will be the matrix
$$
\HH(\nu) := \int_{\DC} g_{\nu}(y)^{\otimes 2} d \nu(y)  - \left ( \int_{\DC} g_{\nu}(y) d \nu(y) \right )^{\otimes 2},
$$
the covariance matrix of the random vector $g_\nu(X)$ for $X \sim \nu$. \\

The following lemma is a straightforward application of the Sudakov-Fernique inequality:
\begin{lemma} \label{lem:SF}
We have, in the above notation,
\begin{equation}\label{eq:SF}
\MW \left ( \left \{ g_\nu(y) : y \in \DC  \right \} \right ) \leq \Comp(f_\nu) = \Comp(\nu).
\end{equation}
\end{lemma}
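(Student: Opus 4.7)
The plan is to apply the Sudakov--Fernique comparison inequality after reducing the Gaussian width on the left-hand side to a comparable width involving the discrete gradients. The key tool is the identity $g_\nu(y) = \tanh(\nabla f_\nu(y))$ from \eqref{eq:tanh}, where $\tanh$ is understood coordinatewise. Since $\tanh:\RR \to (-1,1)$ is $1$-Lipschitz, it follows that for any pair $y, y' \in \DC$,
$$
|g_\nu(y) - g_\nu(y')|^2 = \sum_{i=1}^n \bigl(\tanh(\partial_i f_\nu(y)) - \tanh(\partial_i f_\nu(y'))\bigr)^2 \leq \sum_{i=1}^n \bigl(\partial_i f_\nu(y) - \partial_i f_\nu(y')\bigr)^2 = |\nabla f_\nu(y) - \nabla f_\nu(y')|^2.
$$

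The next step is to introduce the two centered Gaussian processes indexed by $\DC$, namely $X_y := \langle g_\nu(y), \Gamma \rangle$ and $Y_y := \langle \nabla f_\nu(y), \Gamma \rangle$, where $\Gamma \sim N(0,\mathrm{Id})$. The contraction bound above translates directly into the increment comparison $\EE (X_y - X_{y'})^2 \leq \EE (Y_y - Y_{y'})^2$ for all $y, y' \in \DC$. An application of the Sudakov--Fernique inequality then yields
$$
\EE \sup_{y \in \DC} X_y \leq \EE \sup_{y \in \DC} Y_y.
$$

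The left-hand side is, by definition, $\MW(\{g_\nu(y) : y \in \DC\})$, and the right-hand side is bounded above by $\MW(\{\nabla f_\nu(y) : y \in \DC\} \cup \{0\}) = \Comp(\nu)$, since enlarging the set only increases the Gaussian width. Combining these observations completes the proof. I do not anticipate any real difficulty: the entire argument reduces to the Lipschitz property of $\tanh$ and a textbook application of Sudakov--Fernique, and the only bookkeeping point is confirming that the extra $\{0\}$ on the right-hand side of the definition of $\Comp(\nu)$ poses no obstruction.
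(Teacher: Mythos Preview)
Your proof is correct and follows essentially the same approach as the paper: both arguments use the $1$-Lipschitz property of $\tanh$ (via the identity \eqref{eq:tanh}) to obtain the increment comparison $|g_\nu(y) - g_\nu(y')| \leq |\nabla f_\nu(y) - \nabla f_\nu(y')|$, and then apply Sudakov--Fernique. Your handling of the extra $\{0\}$ in the definition of $\Comp(\nu)$ is also fine.
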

\noindent The proof is postponed to the appendix. \\

Finally, for the sake of intuition, let us calculate the field $v_\nu(x)$ for the case that $\nu$ is a tilt of the uniform measure, $\nu = \tilt_\theta(\mu)$. In this case, we have 
$$
h(x) = C_\theta \sum_{y \in \DC} w(x,y) \exp(\langle \theta, y \rangle) = C_\theta \prod_{i=1}^n \left ( \frac{1+x_i y_i}{2} e^{\theta_i} + \frac{1 - x_i y_i}{2} e^{-\theta_i} \right )
$$
where $C_\theta$ is a normalization constant. Therefore
\begin{align*}
v_\nu(x) &~ = \nabla \log h(x) = \sum_{i=1}^n \nabla \log \left ( \frac{1+x_i y_i}{2} e^{\theta_i} + \frac{1 - x_i y_i}{2} e^{-\theta_i} \right ) \\
& = \sum_{i=1}^n \frac{e^{\theta_i} - e^{-\theta_i}}{(1+x_i) e^{\theta_i} + (1-x_i) e^{-\theta_i}} \mathrm{e}_i.
\end{align*}
Remark that $g_\nu(y) = v_\nu(0) = \tanh(\theta)$ for all $y \in \DC$, therefore in this case we have $\HH(\nu) = 0$. We will later see that this is robust in the sense that whenever the matrix $\HH(\nu)$ is small, the measure $\nu$ is close to a product measure.

\subsection{Two main steps towards the proof} \label{sec:twoprops}

The proof of Theorem \ref{thm:maindecomp} consists of two main intermediate results, which are formulated in this section. The first step roughly tells us that in order to find a product measure close to $\nu$ in the $\W$ metric, it is enough to control the quantity $\tr \left (\HH(\nu) \right )$.
\begin{proposition} \label{prop:step1}
Let $\tilde \nu$ be a probability measure on $\DC$. Then there exists a product measure $\xi=\xi(\tilde \nu)$ such that
$$
\W(\tilde \nu, \xi) \leq \sqrt{n \tr \left ( \HH( \tilde \nu) \right )}.
$$
Moreover, one may take $\xi$ to be the unique product measure whose center of mass lies at the point $\int_{\DC} g_{\tilde \nu}(y) d \tilde \nu(y)$ which is equal to the center of mass of of $\tilde \nu$.
\end{proposition}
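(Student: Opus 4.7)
The plan is to exhibit an explicit coupling $(Y,Z)$ with $Y\sim\tilde\nu$ and $Z\sim\xi$, and to bound $\EE d_H(Y,Z)=\sum_{k=1}^n\PP(Y_k\neq Z_k)$ directly. Observe first that by the definition \eqref{eq:defgp} of $g_{\tilde\nu}$, the quantity $G_k:=\langle g_{\tilde\nu}(Y),e_k\rangle$ is exactly $\EE_{\tilde\nu}[Y_k\mid Y_{-k}]$, and by \eqref{eq:gcom} its expectation equals $p_k:=\int y_k\,d\tilde\nu(y)$. The natural candidate for $\xi$ is therefore the product measure on $\DC$ whose $k$-th marginal has mean $p_k$; its centroid then coincides both with that of $\tilde\nu$ and with $\int g_{\tilde\nu}\,d\tilde\nu$, as the statement requires.

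I would construct the coupling via shared uniform randomness. Let $U_1,\dots,U_n$ be i.i.d.\ uniform on $[0,1]$, and build $Y=(Y_1,\dots,Y_n)$ sequentially by the quantile transform: for each $k$, set $Y_k=1$ iff $U_k\leq(1+m_k)/2$, where $m_k:=\EE_{\tilde\nu}[Y_k\mid Y_1,\dots,Y_{k-1}]$ is a function of the already-revealed coordinates. By the chain rule this yields $Y\sim\tilde\nu$. Using the \emph{same} $U_k$'s, define $Z_k:=1$ iff $U_k\leq(1+p_k)/2$; since $Z_k$ depends only on $U_k$, the $Z_k$'s are independent with means $p_k$, so $Z\sim\xi$. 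A direct computation gives
$$\PP\bigl(Y_k\neq Z_k\,\big|\,Y_1,\dots,Y_{k-1}\bigr)\,=\,\tfrac{1}{2}|m_k-p_k|.$$

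It then remains to bound $\sum_k \EE|m_k-p_k|$. By Cauchy--Schwarz, followed by the tower property, which yields $m_k=\EE[G_k\mid Y_1,\dots,Y_{k-1}]$, and Jensen's inequality for conditional expectation,
$$\sum_{k=1}^n\EE|m_k-p_k|\,\leq\,\sqrt{n\sum_{k=1}^n\mathrm{Var}(m_k)}\,\leq\,\sqrt{n\sum_{k=1}^n\mathrm{Var}(G_k)}\,=\,\sqrt{n\,\tr(\HH(\tilde\nu))},$$
giving $\W(\tilde\nu,\xi)\leq\EE d_H(Y,Z)\leq\tfrac{1}{2}\sqrt{n\,\tr(\HH(\tilde\nu))}$, which is in fact slightly stronger than the stated bound. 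I do not anticipate any real obstacle; the key point is simply that the Jensen step runs in the favorable direction, since $\sigma(Y_1,\dots,Y_{k-1})\subset\sigma(Y_{-k})$ forces $\mathrm{Var}(m_k)\leq\mathrm{Var}(G_k)$, i.e.\ the coarser conditional mean that is forced on us by the sequential coupling has smaller variance than the ``symmetric'' conditional mean $G_k$ whose variances sum to $\tr(\HH(\tilde\nu))$.
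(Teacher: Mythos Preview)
Your proposal is correct and follows essentially the same route as the paper: both construct a sequential coupling via shared uniforms, identify the conditional mean $m_k=\EE[Y_k\mid Y_{<k}]$ with $\EE[G_k\mid Y_{<k}]$ (the paper does this via an explicit computation of what it calls $\Lambda^i(i)$, you via the tower property using $\sigma(Y_{<k})\subset\sigma(Y_{-k})$), and then bound $\mathrm{Var}(m_k)\le\mathrm{Var}(G_k)=\langle e_k,\HH(\tilde\nu)e_k\rangle$ by the martingale/Jensen argument before applying Cauchy--Schwarz. Your presentation is slightly more streamlined and retains the factor $\tfrac12$ that the paper simply absorbs.
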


The second step, which is the more difficult one, is the following proposition which tells us that we can find a decomposition of $\nu$ via small tilts in a way that controls the matrix $\HH(\tilt_\theta \nu)$. For a matrix $A$ whose decreasing rearrangement of diagonal entries is denoted by $(\alpha_i)_{1 \leq i \leq n}$, we denote $\tr_k(A) := \sum_{i= \lceil k \rceil }^n \alpha_i$.
\begin{proposition} \label{prop:step2}
Define $D = \MW \left ( \{ g_\nu(y): y \in \DC \} \right ) $. Let $\nu$ be a probability measure on $\DC$ and define $f = \log \frac{d \nu}{d \mu}$. For all $\eps \in \left (0,  1/16 \right )$, there exists a measure $m$ supported on $B(0, \eps \sqrt{n}) \cap [-1,1]^n$, such that $\nu$ admits the decomposition
\begin{equation}\label{eq:decompprop}
\int_{\DC} \varphi d\nu = \int_{B(0, \eps \sqrt{n})} \left (\int_{\DC} \varphi d \tilt_\theta (\nu) \right ) d m(\theta)
\end{equation}
for every test function $\varphi: \DC \to \RR$ and which satisfies
\begin{equation}\label{eq:Hnusmall}
m \left (\theta: ~~ \tr_{k}(\HH(\tilt_\theta \nu)) \leq  \frac{2^8 \alpha D}{\eps} \right ) \geq 1 - \frac{1}{\alpha} - \frac{1}{n}, ~~~ \forall \alpha > 1,
\end{equation}
where $k \leq 2n e^{-1/(32 \eps^2)}$. 
Furthermore, under the additional assumption $\eps \leq \frac{1}{8 \sqrt{\log \frac{4n}{D} }}$, the above equation holds true with $k=1$.
\end{proposition}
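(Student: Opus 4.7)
The plan is to carry out in the discrete setting the stochastic program outlined in Section~\ref{sec:approach}. I construct a sticky Brownian motion $(X_t)_{t\geq 0}$ on $\CC = [-1,1]^n$ started at the origin, and perform the change of measure $dQ/dP \propto e^{f(X_\infty)}$; under $Q$ one has $X_\infty \sim \nu$, and by Girsanov the process acquires a drift $v_t = v_\nu(X_t)$ (with $v_\nu$ the vector field from Section~\ref{sec:preliminaries}) on its alive coordinates. The bridge to Proposition~\ref{prop:step2} is the observation that for every deterministic $t_0$, the conditional law of $X_\infty$ given $\FF_{t_0}$ is \emph{exactly} a tilt of $\nu$: using Bayes' rule together with the identity $\prod_i (1+x_iy_i)/2 = \exp(\langle \mathrm{arctanh}(x), y\rangle + C(x))$ for $y \in \DC$, one obtains $\mathrm{Law}(X_\infty\mid \FF_{t_0}) = \tilt_{\mathrm{arctanh}(X_{t_0})}\nu$. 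The mixing measure $m$ in \eqref{eq:decompprop} is then the pushforward of $\mathrm{Law}(X_{t_0})$ under componentwise $\mathrm{arctanh}$.

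\textbf{Execution.} Choose $t_0 \sim \eps^2$. Since $X_t$ is a bounded $Q$-semimartingale starting at $0$ with diffusion coefficient at most $1$, standard estimates give $\EE|X_{t_0}|_2 \lesssim \eps\sqrt{n}$; an $O(1/n)$-level stopping-time truncation (with the exceptional event absorbed into a trivial piece of $m$) guarantees $\mathrm{arctanh}(X_{t_0}) \in B(0,\eps\sqrt{n}) \cap [-1,1]^n$ throughout. To establish \eqref{eq:Hnusmall}, the main step is to bound $\EE \tr\bigl(\HH(\tilt_{\mathrm{arctanh}(X_{t_0})}\nu)\bigr)$. Since $\HH(\tilt_\theta\nu)$ is the conditional covariance of $g_{\tilt_\theta\nu}(X_\infty)$---essentially $\mathrm{Cov}(g_\nu(X_\infty)\mid\FF_{t_0})$ up to a $\tanh$-correction which is benign when $\theta$ is small---I invoke the ``rapid adaptation'' property of the entropy-minimizing drift: by an It\^o isometry applied to the $Q$-martingale $v_t$ and a divergence-theorem estimate in the spirit of Lemma~\ref{lem:mwGauss}, using that $v_t$ is trapped in a set of Gaussian width $\le D$ (via Lemma~\ref{lem:SF}), one gets $\EE\int_{t_0}^\infty \tr(\Gamma_s^2)\,ds \lesssim D/\sqrt{t_0} \sim D/\eps$, from which Markov's inequality delivers \eqref{eq:Hnusmall}. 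The subtracted $k$ coordinates correspond to those which have hit $\pm 1$ by time $t_0$; the standard Gaussian hitting-time estimate bounds their expected count by $2n e^{-1/(32\eps^2)}$. The ``furthermore'' clause follows because the stronger hypothesis on $\eps$ forces this expected count to be less than $1$, so one can take $k=1$.

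\textbf{Main obstacle.} The principal difficulty is the quantitative divergence-theorem estimate that converts the Gaussian-width bound $D$ into a control of the quadratic variation $\EE\int\tr(\Gamma_s^2)\,ds$ of the drift martingale on the sticky reflected process. Compared with its Gaussian counterpart (Lemma~\ref{lem:mwGauss}), three issues must be handled: (i)~the diffusion matrix $\sigma_t$ is random and vanishes on boundary facets, so the It\^o analysis must interact correctly with the harmonic extensions of Section~\ref{sec:preliminaries} (Fact~\ref{fact:extmartingale}) in order to preserve the martingale structure; (ii)~the drift object $v_\nu$ and the symmetric object $g_\nu$ do not coincide on $\DC$ (they differ by a $\tanh$-type nonlinearity), so one must translate between $v_t$-level and $g_\nu$-level quantities; and (iii)~$\HH(\tilt_\theta\nu)$ is defined using $g_{\tilt_\theta\nu}$ rather than $g_\nu$, so the smallness of $\theta$ must be used to control the discrepancy uniformly. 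Carrying out these bookkeeping steps while preserving the clean constant $2^8/\eps$ in \eqref{eq:Hnusmall} is where the bulk of the technical effort will lie.
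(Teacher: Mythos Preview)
Your overall strategy coincides with the paper's: build the drifted process $X_t$ on $\CC$ via Girsanov, identify $\mathrm{Law}(X_\infty\mid\FF_t)=\tilt_{\mathrm{arctanh}(X_t)}\nu$, and use an It\^o-isometry/Gaussian-width argument to force the conditional covariance $H_t$ to be small at a suitable time. Two points, however, need correction.

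First, the Gaussian-width hypothesis traps $g_t$, not $v_t$. By definition $D=\MW(\{g_\nu(y)\})$, and it is $g_t=\EE[g_\nu(X_\infty)\mid\FF_t]$ that lies in this convex hull; the process $v_t=\EE[v_\nu(X_\infty)\mid\FF_t]$ lives in $\mathrm{Conv}(\{v_\nu(y)\})$, whose Gaussian width is not controlled by $D$ (indeed $|v_\nu|$ is unbounded on $\DC$). Consequently the It\^o isometry gives $\EE\langle g_t,B_t\rangle=\EE\int_0^t\tr(\sigma_s^{1/2}\Gamma_s)\,ds\le\sqrt{t}\,D$ for the quadratic-variation matrix $\Gamma_s$ of $g_s$, and one must then show that $\tr(\sigma_s^{1/2}\Gamma_s)$ dominates $\tr(\sigma_s^{1/2}H_s)$ up to a constant. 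This is exactly your obstacle (ii), but it is not ``bookkeeping'': the paper proves it via two lemmas, first identifying the diagonal of $\Gamma_t$ with that of the cross-covariance $A_t=\EE[(g_\infty-g_t)\otimes(v_\infty-v_t)\mid\FF_t]$, and then using the monotonicity of $\zeta_x$ to show $\tr(\sigma A_t)\ge\tfrac14\tr(\sigma H_t)$. This comparison is the technical heart of the argument.

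Second, the paper does not work at a deterministic $t_0$ but at a genuine stopping time: it caps by $\TTT_\eps=\inf\{t:\|X_t\|_2=\eps\sqrt{n}\text{ or }I_t\text{ large}\}\wedge 1$ (so the support constraint on $m$ holds pathwise, not via an ``absorbed exceptional piece''), and then stops at the first $s\le\eps^2/16$ with $\tr(\sigma_s^{1/2}H_s)$ below threshold, using a Markov bound on $\min_{s\le t}\tr(\sigma_s^{1/2}\Gamma_s)$. Your deterministic-time route can be made to work using the monotonicity of $s\mapsto\EE\tr(H_s)$, but note that the Gaussian-width estimate controls $\int_0^{t_0}\tr(\sigma_s^{1/2}\Gamma_s)\,ds$, not the quantity $\int_{t_0}^\infty\tr(\Gamma_s^2)\,ds$ you wrote; the passage from the former to $\EE\tr(H_{t_0})$ again requires the $\Gamma$-to-$H$ trace comparison above.
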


We will also need the following easy lemma.
\begin{lemma} \label{lem:easy}
For every $\theta \in B(0, \eps \sqrt{n})$ and for all $y \in \DC$ one has
\begin{equation}\label{eq:lemeasy}
\left | \log \frac{d \tilt_\theta \nu}{d \nu}(y) \right | \leq 2 \eps n.
\end{equation}
\end{lemma}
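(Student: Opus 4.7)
The plan is to unwind the definition of the tilt and obtain an explicit formula for the Radon-Nikodym derivative, then bound it coordinate-wise using the constraint $\theta \in B(0, \eps\sqrt{n})$.

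First I would compute: since $\frac{d\nu}{d\mu} = e^{f}$ with $\int e^f d\mu = 1$, the definition of $\tilt_\theta \nu$ gives
$$
\frac{d\tilt_\theta \nu}{d\nu}(y) \;=\; \frac{d\tilt_\theta \nu/d\mu}{d\nu/d\mu}(y) \;=\; \frac{e^{\langle \theta, y\rangle}}{Z_\theta}, \quad \text{where } Z_\theta := \int_{\DC} e^{\langle \theta, z\rangle} d\nu(z).
$$
Taking logs,
$$
\log \frac{d \tilt_\theta \nu}{d \nu}(y) \;=\; \langle \theta, y\rangle - \log Z_\theta.
$$

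Next I would bound the two terms separately. For the first term, Cauchy--Schwarz gives $|\langle \theta, y\rangle| \leq \|\theta\|_2 \|y\|_2 \leq (\eps\sqrt{n})(\sqrt{n}) = \eps n$, using $\|\theta\|_2 \leq \eps\sqrt{n}$ by hypothesis and $\|y\|_2 = \sqrt{n}$ for every $y \in \DC$. For the second term, the same pointwise bound $|\langle \theta, z\rangle| \leq \eps n$ holds for all $z \in \DC$, so $Z_\theta$ is a weighted average of numbers lying in $[e^{-\eps n}, e^{\eps n}]$, giving $|\log Z_\theta| \leq \eps n$.

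Combining via the triangle inequality yields
$$
\left|\log \frac{d\tilt_\theta \nu}{d\nu}(y)\right| \leq |\langle \theta, y\rangle| + |\log Z_\theta| \leq 2\eps n,
$$
as required. This is essentially a one-line calculation once the explicit formula for the ratio is in hand, so I do not anticipate any real obstacle; the only mild caveat is to make sure that the normalization $Z_\theta$ is interpreted correctly (it is the $\nu$-integral of $e^{\langle \theta, \cdot\rangle}$, not a $\mu$-integral), but this falls straight out of the definition.
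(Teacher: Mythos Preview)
Your proof is correct and follows essentially the same approach as the paper: write $\log \tfrac{d\tilt_\theta \nu}{d\nu}(y) = \langle \theta, y\rangle - \log \int e^{\langle \theta, z\rangle} d\nu(z)$, bound each term by $\eps n$ via Cauchy--Schwarz, and combine.
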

\begin{proof}
We have by definition
$$
\log \frac{d \tilt_\theta \nu}{d \nu} = \langle y, \theta \rangle - \log \int \exp(\langle z, \theta \rangle ) d \nu(x).
$$		
Now, since $\theta \in B(0, \eps \sqrt{n})$ we have for all $y \in \DC$ that $|\langle y, \theta \rangle| \leq \eps n$. Consequently we also have $\left |\log \int e^{\langle \theta, y \rangle} d \nu \right | \leq \eps n$. The lemma follows.
\end{proof}

Given the above, the proofs of Theorem \ref{thm:maindecomp} and Theorem \ref{thm:main} follow easily. 

\begin{proof}[Proof of Theorem \ref{thm:maindecomp}]
Given the measure $\nu$, apply Proposition \ref{prop:step2} with the parameters $\alpha, \eps$ to find a measure $m$ on $B(0,\eps \sqrt{n}) \cap [-1,1]^n$ such that the decomposition \eqref{eq:decompprop} holds. Define $\Theta$ to be the set of $\theta \in \RR^n$ such that the event in equation \eqref{eq:Hnusmall} is satisfied with $k=1$. Then for each $\theta \in \Theta$, using Proposition \ref{prop:step1} with $\tilde \nu = \tilt_\theta \nu$, one concludes that
\begin{align*}
\W(\tilt_\theta \nu, \xi(\tilt_\theta \nu)) ~& \leq \sqrt{n \tr \left ( \HH(\tilt_\theta \nu) \right )} \\
& \leq \sqrt{\frac{ 2^8 \alpha n \MW( \{ g_\nu(y): y \in \DC \} ) }{\eps}} \\
& \stackrel{\eqref{eq:SF}}{\leq} \sqrt{\frac{2^8 \alpha n \Comp(\nu) }{\eps}}
\end{align*}
where, in the second inequality we applied Lemma \ref{lem:SF}. Finally, by Lemma \ref{lem:easy}, we have
\begin{align*}
\left | \int_{\RR^n} \KL(\tilt_\theta \nu \Vert \mu)  d m(\theta) - \KL(\nu \Vert \mu) \right | ~& = \left |\int_{\RR^n} \int_{\DC} \left ( \log \frac{d \tilt_\theta \nu}{d \nu} (y) +\log \frac{d \nu}{d \mu} (y) \right ) d \tilt_\theta \nu(y) d m(\theta) \right . \\ &~~~  \left . - \int_{\DC} \log \frac{d \nu}{d \mu} (y) d \nu(y) \right | \\
& \stackrel{\eqref{eq:decompprop}}{=} \left |\int_{\RR^n} \int_{\DC} \log \frac{d \tilt_\theta \nu}{d \nu} (y) d \tilt_\theta \nu(y) d m(\theta)  \right |\\ 
 & \stackrel{ \eqref{eq:lemeasy}}{\leq} 2 \int_{\RR^n} \int_{\DC} \eps n d \tilt_\theta \nu(y) d m(\theta) \leq 2 \eps n.
\end{align*}
\end{proof}

\begin{proof}[Proof of Theorem \ref{thm:main}]
Given the measure $\nu$, apply Proposition \ref{prop:step2} with the measure $\nu$ the parameter $\eps$ and with the choice $\alpha=3$. A consequence of formula \eqref{eq:Hnusmall} is the existence of $\theta \in B(0, \eps \sqrt{n})$ and of a diagonal projection matrix $\sigma$ of rank $n-k$, with $k \leq 2 n e^{-1/(32 \eps^2)}$, such that
$$
\tr(\sigma \HH(\tilt_\theta \nu)) \leq \frac{2^8 \alpha \Comp(\nu)}{\eps}.
$$
Define $\tilde \nu = \tilt_\theta \nu$ and $\tilde f = \log \frac{d \tilt_\theta \nu}{d \mu}$ so that
$$
\tilde f(y) = f(y) + \langle \theta, y \rangle - \log \int e^{\langle \theta, y \rangle} d \nu.
$$
Now, since $\theta \in B(0, \eps \sqrt{n})$ we have for all $y \in \DC$ that $|\langle y, \theta \rangle| \leq \eps n$. Consequently we also have $\left |\log \int e^{\langle \theta, y \rangle} d \nu \right | \leq \eps n$. We therefore get $|f - \tilde f| \leq 2 \eps n$. \\

Define $b = \int y d \tilde \nu(y)$, the center of mass of $\tilde \nu$. Let $\xi$ be the unique product measure whose center of mass is at $b$. Since, by definition, we have $|\langle g_\nu(y), e_i \rangle| \leq 1$ for all $y \in \DC$ and for all $i \in [n]$, it follows that $\langle e_i, \HH(\tilde \nu) e_i \rangle \leq 1$. According to Proposition \ref{prop:step1}, we therefore have
$$
\W(\tilde \nu, \xi) \leq \sqrt{n \tr(\HH(\tilde \nu))} \leq \sqrt{n \left (k+ \tr(\sigma \HH(\tilde \nu) ) \right ) }  
$$
which completes the first part of the theorem.

Roughly speaking, the second part of the theorem follows by considering the foliation with respect to $\sigma$ and invoking Proposition \ref{prop:step1} on each sub-cube separately. By rearranging the coordinates, we may assume without loss of generality that the diagonal entries of the matrix $\sigma$ are increasing. For each $y \in \mathcal{C}_k$, consider the sub-cube $A_y = \left \{x \in \DC; x_i=y_i, ~ 1 \leq i \leq k \right \}$ and let $\tilde \nu_y, \xi_y$ be the restrictions of the measures $\tilde \nu$ and $\xi$ to $A_y$, respectively, normalized to be probability measures. Let $b(y)$ be the center of mass of $\tilde \nu_y$ and recall that $b$ is the center of mass of $\tilde \nu$. Remark that for all $i>k$, for all $y \in \mathcal{C}_k$ and for all $x \in A_y$, one has that $\langle g_{\tilde \nu} (x), e_i \rangle = \langle g_{\tilde \nu_y}(x), e_i \rangle$. Using formula \eqref{eq:gcom}, we therefore get
$$
\langle b(y), e_i \rangle = \left \langle \int g_{\tilde \nu}(x) d \tilde \nu_y(x), e_i \right \rangle.
$$
Consequently, by the law of total variance, we have for all $i > k$,
$$
\mathrm{Var}_{X \sim \tilde \nu} \left [\langle g_{\tilde \nu}(X), e_i \rangle \right ] = \sum_{y \in \mathcal{C}_k} \tilde \nu(A_y) \left ( \mathrm{Var}_{Y \sim \tilde \nu_y} \left [ \langle g_{\tilde \nu}(Y), e_i \rangle \right ] + \langle b(y) - b, e_i \rangle^2 \right ).
$$
which gives, by definition of the matrix $\HH(\cdot)$,
\begin{equation}\label{eq:totalvariance}
\tr \left ( \sigma \HH(\tilde \nu) \right ) = \sum_{y \in \mathcal{C}_k} \tilde \nu(A_y) \left (\tr \left (\HH(\tilde \nu_y) \right ) + |\sigma(b(y) - b)|^2 \right ).
\end{equation}
For each $y \in \mathcal{C}_k$, we invoke Proposition \ref{prop:step1} on $\tilde \nu_y$ to conclude that the product measure $\tilde \xi_y$, whose center of mass lies at $b(y)$, satisfies
\begin{equation}\label{eq:distfiber}
\W(\tilde \xi_y, \tilde \nu_y) \leq \sqrt{n \tr \left (\HH(\tilde \nu_y) \right )}.
\end{equation}
Define $\xi' := \pi_I(\xi) \times \pi_{[n] \setminus I} (\tilde \nu)$ where $I = \{i; \sigma_{i,i} = 1\}$.
Since for all $y \in \mathcal{C}_k$ one has $\xi'(A_y) = \tilde \nu(A_y)$, we have
\begin{align*}
\W(\xi', \tilde \nu) ~& \leq \sum_{y \in \mathcal{C}_k} \tilde \nu(A_y) \W(\xi_y, \tilde \nu_y)  \\
& \leq \sum_{y \in \mathcal{C}_k} \tilde \nu(A_y) \left (\W(\xi_y,  \tilde \xi_y) + \W(\tilde \xi_y, \tilde \nu_y) \right ) \\
& \stackrel{\eqref{eq:distfiber}}{\leq} \sum_{y \in \mathcal{C}_k} \tilde \nu(A_y) \left ( \Vert \sigma (b - b(y)) \Vert_1 + \sqrt{n \tr \left (\HH(\tilde \nu_y) \right )} \right ) \\
& \leq \sqrt{n} \sum_{y \in \mathcal{C}_k} \tilde \nu(A_y) \left ( | \sigma (b - b(y)) | + \sqrt{\tr \left (\HH(\tilde \nu_y) \right )} \right ) \\
& \leq \sqrt{2 n \sum_{y \in \mathcal{C}_k} \tilde \nu(A_y) \left ( | \sigma (b - b(y)) |^2 + \tr \left (\HH(\tilde \nu_y) \right ) \right ) } \\
& \stackrel{ \eqref{eq:totalvariance} }{=} \sqrt{2 n \tr(\sigma \HH(\tilde \nu))} \leq 2^6 \sqrt{n \frac{\Comp(\nu)}{\eps}},
\end{align*}
which is the desired bound.
\end{proof}
\subsection{The first step: obtaining an estimate on $\W$ using $\HH$}
The proof of the first step is rather straightforward. We choose to prove it directly, but in fact it follows from a combination of two well-known inequalities: the Log-Sobolev inequality and the transportation-entropy inequality. The sketch of this direction goes as follows: let $\xi$ be the product measure having the same expectation as $\nu$. Then, a straightforward calculation gives that $\tr \HH(\nu) \approx \I_\xi(\nu)$ where $\I_\xi(\nu)$ is the Fisher information of $\nu$ with respect to $\xi$. A combination of the two above inequalities then gives $\W(\nu, \xi)^2 \leq 2 n \KL(\nu \Vert \xi) \leq n \I_\xi(\nu)$ up to constants.

We now give a direct proof of the proposition.
\begin{proof}[Proof of Proposition \ref{prop:step1}]
Let $U_1,\dots,U_n$ be independent random variables uniformly distributed in $[-1,1]$. Define $\overline g = (\overline g^1, \dots, \overline g^n) = \int g_{\nu}(y) d \nu(y)$ for all $i \in [n]$ (where $g_\nu$ is defined in equation \eqref{eq:defgnu} above). Let $Y=(Y^1,\dots,Y^n)$ be a random point in $\DC$ defined by
$$
Y^i = \begin{cases}
+1 & U_i \leq \overline g^i \\
-1 & U_i > \overline g^i.
\end{cases}
$$
We define $\xi$ to be the law of $Y$. Clearly, $\xi$ is a product measure. Let us now define a suitable coupling of $Y$ with a random variable $Z=(Z^1,\dots,Z^n)$ whose law is $\nu$. Consider the filtration $\FFF_i = \sigma(U_1, \dots, U_{i-1})$. Set also 
$$
J(j) := \{(y_1,\dots,y_n) \in \DC| ~~  y_k = Z^k, ~~ \forall 1 \leq k \leq j-1 \}.
$$
Define a vector $\Lambda(j) = (\Lambda^1(j),\dots,\Lambda^n(j))$ by the formula
$$
\Lambda(j) = \frac{\sum_{y \in J(j)} e^{f(y)} g(y)}{\sum_{y \in J(j)} e^{f(y)}}
$$
where we abbreviate $g(y) = g_\nu(y)$. Finally, we can define inductively
$$
Z^{i} = \begin{cases}
+1 & U_i \leq \Lambda^i(i) \\
-1 & U_i > \Lambda^i(i).
\end{cases}
$$
Remark that $\Lambda(j)$ is $\FFF_j$ measurable. Let us now show that $Z \sim \nu$. Define $Z(i)=(Z^1,\dots,Z^{i},0,\dots,0)$. First note that whenever $i \geq j$, one has
\begin{align*}
\Lambda^i(j) ~& = \frac{\sum_{y \in J(j), \atop y_i=-1} g(y) \left (e^{f(y)} + e^{f(y+2e_i)} \right ) }{\sum_{y \in J(j)} e^{f(y)}} \\
& \stackrel {\eqref{eq:defgp} }{=} 2 \frac{\sum_{y \in J(j), \atop y_i=-1} \partial_i e^{f}(y) }{\sum_{y \in J(j)} e^{f(y)}} \\
& = \frac{\sum_{y \in J(j)} \partial_i e^{f}(y) }{\sum_{y \in J(j)} e^{f(y)}} 
 \stackrel {\eqref{eq:nablaharmonic} }{=} \frac{\partial_i h(Z(j-1))}{h(Z(j-1))}.
\end{align*}
The last equation and the definition of $Z^{i}$ teach us that 
$$
\PP(Z^i = s | Z(i-1)) = \frac{1 + s \frac{\partial_i h(Z(i-1))}{h(Z(i-1))}}{2} = \frac{h(Z^1,\dots,Z^{i-1},s,0,\dots,0)}{h(Z(i-1))}, ~~ s = \pm 1.
$$
So,
$$
\PP(Z=(z^1,...,z^n)) = \prod_{i \in [n]} \PP \bigl (Z^i = z^i \bigl | Z(i-1) = z(i-1) \bigr . \bigr) = \prod_{i \in [n]} \frac{h(z(i))}{h(z(i-1))} = h(z) = e^{f(z)}
$$
where we have defined $z(i) = (z^1,\dots,z^i,0,\dots,0)$. This establishes the fact that $Z \sim \nu$. Moreover, by the last formula it is easily seen that that $Z | \FFF_i$ has the law $\frac{e^{f(\cdot)} \mathbf{1}_{\cdot \in J(i)}} {\sum_{y \in J(i)} e^{f(y)}}$. Consequently, we have
$$
\Lambda(j) = \EE[g(Z) | \FFF_j] = \EE[\Lambda(n) | \FFF_j], ~~ \forall j \in [n]
$$
so that $\Lambda(j)$ is a martingale. By definition of $\Lambda$ and $\HH(\nu)$ one may verify that 
$$
\mathrm{Cov}(\Lambda(n)) = \mathrm{Cov}(g(Z)) = \HH(\nu).
$$
The fact that $\Lambda(j)$ is a martingale implies that
$$
\mathrm{Var}[\Lambda^i(i)] \leq \mathrm{Var}[\Lambda^i(n)] = \langle e_i, \HH(\nu) e_i \rangle
$$
and, moreover, we have for all $i$,
$$
\EE[\Lambda(i)] = \EE[\Lambda(n)] = \EE[g(Z)] = \overline g.
$$
The last two equalities and the definition of $Z^i$ and $Y^i$ give us that $\EE[Z^i]=\EE[Y^i]$ and that
\begin{align*}
\PP(Z^i \neq Y^i) ~& = \EE \left |\PP(U_i \geq \overline g^i) - \PP(U_i \geq \Lambda^i(i) | \FFF_{i}) \right | \\
& = \frac{1}{2} \EE \bigl [ |\overline g^i - \Lambda^i(i)| \bigr] \\
& \leq \sqrt{\mathrm{Var}[\Lambda^i(i)]} \leq \sqrt{\langle e_i, \HH(\nu) e_i \rangle}.
\end{align*}
Consequently,
$$
\W(\xi,\nu) \leq \EE \left [ \sum_{i=1}^n \mathbf{1}_{ \{ Y^i \neq Z^i \}}  \right ] \leq \sum_{i \in [n]} \sqrt{\langle e_i, \HH(\nu) e_i \rangle} \leq \sqrt{n \tr(\HH)}.
$$
Finally note that by definition, the center of mass of $\xi$ lies at the point $\overline g$ which is by the identity \eqref{eq:gcom} equal to the center of mass of $\nu$. The proof is complete.
\end{proof}

\subsection{The second step: finding product-like tilts}
The proof of Proposition \ref{prop:step2} is based on several stochastic constructions, introduced henceforth.

\subsubsection{Stochastic constructions} \label{sec:stochastic}
Let a probability measure $\nu$ on $\DC$ be fixed and define the functions $v(x)=v_\nu(x),h(x)=h_\nu(x)$ and $g(y)=g_\nu(y)$ as in Section \ref{sec:preliminaries}. Let $\sigma:[-1,1] \times [0,\infty) \to \RR$ be the function 
$$
\sigma(x,t) = \begin{cases}
\mathbf{1}_{x \in \left (-\tfrac 1 2, \tfrac 1 2 \right )} & 0 \leq t < 1 \\
\mathbf{1}_{x \in (-1,1)} & t \geq 1.
\end{cases}
$$ 
By slight abuse of notation, for $x = (x_1, \dots, x_n) \in \CC$ define 
$$
\sigma(x,t) = \left (\begin{matrix}
\sigma(x_1,t) & 0 & 0 & \cdots \\
0 & \sigma(x_2,t) & 0 & \cdots \\
0 & 0 & \sigma(x_3,t) & \cdots \\
\cdots 
\end{matrix} \right ).
$$
Let $B_t$ be a standard Brownian motion in $\RR^n$ adapted to a filtration $\FF_t$. Our central construction is the following: let $X_t$ be the solution of the stochastic differential equation
\begin{equation}\label{eq:mainsde}
X_0 = 0, ~~ d X_t = \sigma(X_t,t)^{1/2} d B_t + \sigma(X_t,t) v(X_t) dt
\end{equation}
where $v(X_t) = v_\nu(X_t)$ is defined in equation \eqref{eq:defvx2}. 
\begin{remark} 
The function $\sigma(x,t)$ is defined in a way that $X_t \in [-1/2,1/2]^n$ for $t \leq 1$, $X_t \in \CC$ for all $t$ and $\lim_{t \to \infty} X_t \in \DC$. The particular choice of function is not important as long as one gets this sort of behavior, and in fact, as long as $\eps$ is smaller than the order $1/\log n$, it will be enough to define $\sigma(x,t) = \mathbf{1}_{x < 1}$ instead.
\end{remark}
Define also
$$M_t = \log h(X_t)$$
and
$$
\sigma_t = \sigma(X_t,t), ~ v_t = v(X_t).
$$
We have, by a simple calculation using It\^o's formula and by Fact \ref{fact:extmartingale},
$$
d h(X_t) = \left \langle \nabla h(X_t), \sigma_t^{1/2} d B_t + \sigma_t v_t dt \right \rangle
$$
and
\begin{equation}\label{eq:dMt}
d M_t = \langle \sigma_t^{1/2} v_t, d B_t \rangle + \frac{1}{2} |\sigma_t^{1/2} v_t|^2 dt .
\end{equation}
Finally, we define 
$$
q(x) = \sum_{y \in \DC} g(y) w(x,y) e^{f(y)}, ~~ \forall x \in \CC
$$ 
the harmonic extension of $g(x) e^{f(x)}$ to $\CC$. Consider the process
\begin{equation}\label{eq:defgt}
g_t := \frac{q(X_t)}{h(X_t)}.
\end{equation}

Define $X_\infty = \lim_{t \to \infty} X_t$. By the martingale convergence theorem and by the fact that $\sigma(x) = 1$ for $x \in (-1,1), t>1$ and $\sigma(\pm 1) = 0$, we have that almost surely $X_\infty \in \DC$. The following fact is a direct consequence of Girsanov's formula.
\begin{fact}
For any $t>0$, the random variable $X_\infty$ conditioned on $\FF_t$ has the law $y \to \frac{w(X_t,y) e^{f(y)}}{h(X_t)}$. In other words, for every test function $\varphi: \DC \to \RR$, one has
\begin{equation}\label{eq:com}
\EE \left [ \varphi(X_\infty) | \FF_t \right ] = \frac{1}{h(X_t)} \sum_{y \in \DC} \varphi(y) w(X_t,y) e^{f(y)}.
\end{equation}
\end{fact}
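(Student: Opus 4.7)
The plan is to identify $X_t$ as Doob's $h$-transform of a pure-diffusion reference process, and deduce the conditional law of $X_\infty$ via Bayes' rule for equivalent changes of measure. First, let $Q$ be the measure, equivalent to the underlying $P$, under which $\tilde B_t := B_t + \int_0^t \sigma_s^{1/2} v_s \, ds$ is a standard Brownian motion; existence follows from a standard application of Girsanov's theorem, using the fact that $v_\nu$ is bounded along the trajectory of $X_t$ (its possible blow-up near the vertices of $\CC$ is tamed by $\sigma_t$ vanishing at those coordinates). Under $Q$ the SDE \eqref{eq:mainsde} reduces to the pure diffusion $dX_t = \sigma(X_t,t)^{1/2} d\tilde B_t$, so the coordinates of $X_t$ are independent bounded martingales, each eventually hitting $\pm 1$ by the design of $\sigma$. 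The 1D optional stopping identity together with independence then yields
\begin{equation*}
Q[X_\infty = y \mid \FF_t] = \prod_{i=1}^n \frac{1 + X_t^{(i)} y_i}{2} = w(X_t, y), \qquad y \in \DC.
\end{equation*}

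Next I would compute $dP/dQ|_{\FF_t}$. Applying It\^o's formula to $\log h(X_t)$ under $Q$, using the identity $\nabla^2 \log h = \nabla^2 h / h - v_\nu^{\otimes 2}$ together with Fact \ref{fact:extmartingale} (which gives $\tr(\sigma_t \nabla^2 h(X_t)) = 0$, since $h$ is the harmonic extension of $e^f$ and $\sigma_t$ is diagonal), one obtains
\begin{equation*}
d \log h(X_t) = \langle v_t, \sigma_t^{1/2} d \tilde B_t \rangle - \tfrac{1}{2} |\sigma_t^{1/2} v_t|^2 \, dt.
\end{equation*}
This is exactly the differential of the exponent of the Girsanov density converting $Q$ back to $P$, so $dP/dQ|_{\FF_t} = h(X_t)/h(0)$; passing to $t = \infty$ and using that $X_\infty \in \DC$ with $h = e^f$ on $\DC$ gives $dP/dQ = e^{f(X_\infty)}/h(0)$. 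Bayes' formula for the conditional expectation under an equivalent change of measure now yields
\begin{equation*}
\EE_P[\varphi(X_\infty) \mid \FF_t] = \frac{\EE_Q\bigl[\varphi(X_\infty) e^{f(X_\infty)}/h(0) \mid \FF_t\bigr]}{h(X_t)/h(0)} = \frac{1}{h(X_t)} \sum_{y \in \DC} \varphi(y) w(X_t, y) e^{f(y)},
\end{equation*}
which is the asserted identity.

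The main delicate point I foresee is the well-posedness of the change of measure in the presence of a degenerate diffusion coefficient $\sigma_t$ and of a potentially unbounded $v_\nu$ near the vertices of $\CC$; however, on any coordinate where $\sigma_t$ vanishes the drift $\sigma_t v_t$ vanishes as well, and the explosion of $v_\nu$ at the vertices is controlled by the same vanishing of $\sigma_t$, so the Girsanov exponent remains a well-defined square-integrable martingale. A technically cleaner alternative that bypasses Girsanov altogether is a direct It\^o verification: setting $F_\varphi(x) := q_\varphi(x)/h(x)$ with $q_\varphi$ the harmonic extension of $y \mapsto \varphi(y) e^{f(y)}$, a short computation using harmonicity of both $h$ and $q_\varphi$ yields the identity $\partial_{ii} F_\varphi = -2\, v_\nu^{(i)} \partial_i F_\varphi$, which forces the drift of $dF_\varphi(X_t)$ under \eqref{eq:mainsde} to cancel exactly; thus $F_\varphi(X_t)$ is a bounded martingale whose terminal value equals $\varphi(X_\infty)$, which identifies $F_\varphi(X_t)$ with $\EE[\varphi(X_\infty) \mid \FF_t]$.
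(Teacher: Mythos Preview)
Your proposal is correct and follows essentially the same route as the paper: a Girsanov change of measure under which $X_t$ becomes a driftless diagonal martingale (so the conditional law of $X_\infty$ is the harmonic measure $w(X_t,\cdot)$), and then the identification $dP/dQ = h(X_\infty)/h(X_t)$ via the It\^o differential of $M_t = \log h(X_t)$; the only cosmetic difference is that you perform a global change of measure from time $0$ and invoke Bayes' formula, whereas the paper changes measure on $[t,\infty)$ directly. Your ``cleaner alternative'' via the martingale $q_\varphi(X_t)/h(X_t)$ is exactly the content of the remark that follows the paper's proof.
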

\begin{proof}
Fix $t>0$. Suppose that $\{B_s\}_{t \geq s}$ is a Brownian motion when the underlying Wiener space is equipped with a measure $P$. Let $Q$ be a measure, defined on the same underlying Wiener space, by
$$
\frac{dQ}{dP} = \exp \left (  - \int_t^\infty \left (\langle \sigma_s^{1/2} v_s, d B_s \rangle + \frac{1}{2} \left |\sigma_s^{1/2} v_s \right |^2 dt \right ) \right ).
$$
Then, by Girsanov's formula we have that the process $s \to B_{t+s} - B_t + \int_t^{t+s} \sigma_r^{1/2} v_r dr$ is a Brownian motion under the measure $Q$ and according to formula \eqref{eq:mainsde}, the process $\{X_s \}_{s \geq t}$ is a martingale under that measure, whose diffusion matrix is diagonal. Consequently we have that the distribution of $X_\infty | \FF_t$ under $Q$ has the density $w(X_t, \cdot)$, since the latter represents the harmonic measure on $\DC$ with respect to any diagonal martingale started at $X_t$. Using equation \eqref{eq:dMt}, we have that
$$
\frac{dP}{dQ} = \exp \left ( \int_t^\infty d M_t \right ) = e^{M_\infty - M_t} = \frac{h(X_\infty)}{h(X_t)} = \frac{e^{f(X_\infty)}}{h(X_t)}.
$$
Thus, we have under the measure $P$ that
\begin{align*}
\EE_P[\varphi(X_\infty) | \FF_t] ~& = \left .\EE_Q \left [ \frac{dP}{dQ} \varphi(X_\infty)  \right | \FF_t\right ] \\
& = \left . \EE_Q \left [ \frac{e^f(X_\infty)}{h(X_t)} \varphi(X_\infty) \right | \FF_t \right ] \\
& = \frac{1}{h(X_t)}\sum_{y \in \DC} e^{f(y)} w(X_t, y) \varphi(y).
\end{align*}
The proof is complete.
\end{proof}
\begin{remark}
A different way to see that the above identity is correct without using Girsanov's inequality is simply to calculate the It\^o differential of the process $p_t := \frac{e^{f(y)} w(X_t,y) }{ h(X_t)}$ and observe that it is a martingale. Therefore, one has $\PP(X_\infty = y | \FF_t) = \EE[p_\infty | \FF_t] = p_t$.
\end{remark}
Define a mapping  $\eta: \mathrm{Int}(\CC) \to \RR^n$ by 
$$
\langle \eta(x), e_i \rangle = \log \sqrt{\frac{1 + \langle x, e_i \rangle}{1- \langle x, e_i \rangle}}.
$$
We have by definition, for every $x \in \CC$ and $y \in \DC$, $\exp(\langle \eta(x), y \rangle) = \prod_{i \in [n]} \frac{1 + x_i y_i}{\sqrt{(1+x_i)(1-x_i)}} = C_x w(x,y)$ with $C_x$ depending only on $x$. Therefore
\begin{equation}\label{eq:eta}
\int_{\DC} \varphi d (\tilt_{\eta(X_t)} \nu) = \frac{1}{h(x)} \sum_{y \in \DC} \varphi(y) w(X_t, y) e^{f(y)} \stackrel{\eqref{eq:com}}{=} \EE[\varphi(X_\infty) | \FF_t ].
\end{equation}
The following two corollaries follow immediately from equation \eqref{eq:com}.
\begin{corollary}
For every stopping time $\tau$ such that $X_\tau \in \mathrm{Int}(\CC)$ almost surely, one has the following decomposition of the measure $\nu$: for every test function $\varphi:\DC \to \RR$,
\begin{equation}\label{eq:decomposition}
\int_{\DC} \varphi d \nu = \EE \left [  \int_{\DC} \varphi d (\tilt_{\eta(X_\tau)} \nu) \right ].
\end{equation}
\end{corollary}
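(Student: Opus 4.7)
The key observation is that equation \eqref{eq:eta} reads $\int_{\DC}\varphi\, d(\tilt_{\eta(X_t)}\nu) = \EE[\varphi(X_\infty)\mid \FF_t]$, which exhibits the left-hand side as a specific bounded $\FF_t$-martingale. That is, if I define the deterministic function $\psi:\mathrm{Int}(\CC)\to\RR$ by
$$
\psi(x) := \int_{\DC}\varphi\, d(\tilt_{\eta(x)}\nu),
$$
then equation \eqref{eq:eta} says exactly that $\psi(X_t) = \EE[\varphi(X_\infty)\mid \FF_t]$ for every deterministic $t\geq 0$. The right-hand side is a martingale by the tower property, and it is bounded by $\|\varphi\|_\infty$ since $\nu$ and $\tilt_{\eta(X_t)}\nu$ are probability measures.

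The plan is then to apply the optional stopping theorem to this bounded martingale at the stopping time $\tau$. Because $X_\tau \in \mathrm{Int}(\CC)$ almost surely, $\eta(X_\tau)$ is well defined and $\psi(X_\tau)$ makes sense. Optional stopping (valid since the martingale is uniformly bounded, so no integrability issue arises even if $\tau$ is unbounded) gives
$$
\EE\bigl[\psi(X_\tau)\bigr] = \EE\bigl[\psi(X_0)\bigr] = \psi(0).
$$
Now $X_0 = 0$, so $\eta(X_0) = 0$ and $\tilt_0\nu = \nu$, yielding $\psi(0) = \int_{\DC}\varphi\, d\nu$. Unwinding the definition of $\psi$ on the left-hand side produces the claimed identity
$$
\int_{\DC}\varphi\, d\nu \;=\; \EE\!\left[\int_{\DC}\varphi\, d(\tilt_{\eta(X_\tau)}\nu)\right].
$$

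The main technical point, should one wish to verify it carefully, is checking that equation \eqref{eq:eta} extends from deterministic times to the stopping time $\tau$. This is immediate from the viewpoint above: $\psi$ is a fixed measurable function on $\mathrm{Int}(\CC)$, $X_\tau$ is $\FF_\tau$-measurable, and the extension is simply the statement $\psi(X_\tau) = \psi(X_\tau)$ combined with the fact that the martingale $\psi(X_t)$ is closed by $\varphi(X_\infty)$. I do not anticipate any real obstacle beyond this bookkeeping, since the construction of $X_t$ was engineered precisely so that such a change-of-measure interpretation at stopping times is valid.
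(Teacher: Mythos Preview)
Your proposal is correct and follows essentially the same approach as the paper: both use that $\EE[\varphi(X_\infty)\mid\FF_t]$ is a bounded martingale, identify it with $\int_{\DC}\varphi\,d(\tilt_{\eta(X_t)}\nu)$ via equation \eqref{eq:eta}, and apply the optional stopping theorem. The paper's version is slightly terser, writing the chain $\int\varphi\,d\nu = \EE[\varphi(X_\infty)] = \EE\bigl[\EE[\varphi(X_\infty)\mid\FF_\tau]\bigr]$ directly, but the content is identical to yours.
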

\begin{proof}
Since $\EE[\varphi(X_\infty) | \FF_t]$ is a martingale, we have by the optional stopping theorem
$$
\int_{\DC} \varphi d \nu = \EE[\varphi(X_\infty)] = \EE \bigl [ \EE[ \varphi(X_\infty) | \FF_{\tau} ] \bigr ] \stackrel{\eqref{eq:eta}}{=} \EE \int_{\DC} \varphi d (\tilt_{\eta(X_\tau)} \nu).
$$
\end{proof}

\begin{corollary}
One has the identities
$$
v_t = \EE[v(X_\infty)| ~ \FF_t], \mbox{ and } g_t = \EE[g(X_\infty)| ~ \FF_t].
$$
In particular, the processes $v_t$ and $g_t$ are martingales.
\end{corollary}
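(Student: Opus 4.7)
The plan is to apply the representation formula \eqref{eq:com} directly with appropriate choices of test function $\varphi$, and then read off each identity from the definitions of $v_t$ and $g_t$, together with Fact \ref{fact:harmonicext}.

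First I would handle the $g_t$ identity, which is the easier of the two. Taking $\varphi(y) = g(y) = g_\nu(y)$ in \eqref{eq:com} gives
$$
\EE[g(X_\infty) \mid \FF_t] = \frac{1}{h(X_t)} \sum_{y \in \DC} g(y) w(X_t,y) e^{f(y)}.
$$
By the definition $q(x) = \sum_{y \in \DC} g(y) w(x,y) e^{f(y)}$, the right-hand side is exactly $q(X_t)/h(X_t) = g_t$, matching \eqref{eq:defgt}.

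Next I would deal with $v_t$. Here the natural test function is $\varphi(y) = v(y)$ for $y \in \DC$; recall that on the cube the extended field reduces to $v(y) = \nabla e^{f}(y)/e^{f(y)}$, so that $v(y) e^{f(y)} = \nabla e^{f}(y)$. Substituting into \eqref{eq:com} coordinatewise yields
$$
\EE[v(X_\infty) \mid \FF_t] = \frac{1}{h(X_t)} \sum_{y \in \DC} w(X_t,y)\, \nabla e^{f}(y).
$$
By Fact \ref{fact:harmonicext}, since $h$ is the harmonic extension of $e^f$ on $\CC$, its gradient $\nabla h$ is the harmonic extension of $\nabla e^{f}$, so the sum on the right is precisely $\nabla h(X_t)$. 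Therefore
$$
\EE[v(X_\infty) \mid \FF_t] = \frac{\nabla h(X_t)}{h(X_t)} = v(X_t) = v_t,
$$
by the definition \eqref{eq:defvx2}.

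Finally, the martingale property is immediate: each of $v_t$ and $g_t$ has just been identified with a conditional expectation of a bounded (hence integrable) $\FF_\infty$-measurable random variable with respect to the filtration $\FF_t$, and for any such random variable the tower property gives the martingale property. There is essentially no obstacle here, the statement is a bookkeeping consequence of \eqref{eq:com} together with the harmonicity of the extensions; the only thing one must be careful about is to interpret $v$ on $\DC$ using its original definition \eqref{eq:defvnu} (as $\nabla e^f / e^f$) rather than the extended formula \eqref{eq:defvx2}, so that the identity $v(y) e^{f(y)} = \nabla e^{f}(y)$ used above holds pointwise on $\DC$.
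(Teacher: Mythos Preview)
Your proof is correct and follows essentially the same approach as the paper: apply the representation formula \eqref{eq:com} with the test functions $\varphi = g$ and $\varphi = v$, and use Fact \ref{fact:harmonicext} (equation \eqref{eq:nablaharmonic}) to identify $\sum_{y} w(X_t,y)\,\nabla e^{f}(y)$ with $\nabla h(X_t)$. The paper's version is just slightly terser, writing out $g_t$ and $v_t$ as the relevant sums first and then invoking \eqref{eq:com}, but the content is identical.
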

\begin{proof} 
Observe that, by definition,
$$
g_t = \frac{1}{h(X_t)} \sum_{y \in \DC} g(y) w(X_t,y) e^{f(y)}
$$	
and
$$
v_t = \frac{\nabla h(X_t)}{h(X_t)} \stackrel{\eqref{eq:nablaharmonic}}{=} \frac{1}{h(X_t)} \sum_{y \in \DC} v(y) w(X_t,y) e^{f(y)}.
$$
Apply equation \eqref{eq:com} with the choices $\varphi(y)=v(y)$ and $\varphi(y)=g(y)$.
\end{proof}

The following calculation is central to our proof. It is the consequence of a straightforward calculation using It\^{o}'s formula, and its proof is postponed to the appendix.
\begin{fact} \label{fact:dgt}
One has,
\begin{equation}\label{eq:dgt}
d g_t = \Gamma_t \sigma_t^{1/2} d B_t
\end{equation}
where
\begin{equation}\label{eq:defGammat}
\Gamma_t := \frac{\nabla q(X_t) }{ h(X_t)} - g_t \otimes v_t.
\end{equation}
\end{fact}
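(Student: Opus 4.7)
The plan is a direct application of Itô's formula to the quotient $g_t = q(X_t)/h(X_t)$, using the two key structural features: $\sigma_t$ is diagonal, and both $q$ (componentwise) and $h$ are harmonic extensions of functions on $\DC$, so by Fact \ref{fact:harmonicext} all their pure second partials vanish. These two facts together annihilate the Itô correction terms when applying Itô's formula to $q(X_t)$ and $h(X_t)$ individually, and the remaining drift contributions will cancel thanks to the particular drift $\sigma_t v_t$ chosen in \eqref{eq:mainsde}.

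First, I would compute $dq^i(X_t)$ and $dh(X_t)$ separately. Writing $\sigma_t$ in its diagonal form, one has $d[X^k, X^\ell]_t = \sigma_t^{kk}\delta_{k\ell}\,dt$, so the Itô correction for a scalar function $\varphi$ evaluated at $X_t$ is $\tfrac{1}{2}\sum_k \partial_{kk}\varphi(X_t)\,\sigma_t^{kk}\,dt$. For harmonic extensions this sum vanishes identically, hence
\[
dq^i(X_t) = \langle \nabla q^i, \sigma_t^{1/2}\,dB_t\rangle + \langle \nabla q^i, \sigma_t v_t\rangle\,dt, \qquad dh(X_t) = h\langle v_t, \sigma_t^{1/2}\,dB_t\rangle + h\langle v_t, \sigma_t v_t\rangle\,dt,
\]
where I have used $\nabla h(X_t) = h(X_t)\,v_t$ by the definition \eqref{eq:defvx2} of $v$. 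From these expressions one also reads off the quadratic covariations $d[q^i,h]_t = h\langle \nabla q^i, \sigma_t v_t\rangle\,dt$ and $d[h,h]_t = h^2\langle v_t, \sigma_t v_t\rangle\,dt$.

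Next, I apply Itô to $g_t^i = q^i(X_t)/h(X_t)$ using the standard quotient expansion
\[
dg_t^i = \frac{dq^i(X_t)}{h} - \frac{q^i}{h^2}\,dh(X_t) - \frac{d[q^i,h]_t}{h^2} + \frac{q^i}{h^3}\,d[h,h]_t.
\]
Plugging in the expressions above and using $g_t^i = q^i/h$, the four $dt$ contributions pair up: the terms $\tfrac{1}{h}\langle \nabla q^i, \sigma_t v_t\rangle\,dt$ and $-\tfrac{1}{h}\langle \nabla q^i, \sigma_t v_t\rangle\,dt$ cancel, and likewise $-g_t^i\langle v_t, \sigma_t v_t\rangle\,dt$ and $+g_t^i\langle v_t, \sigma_t v_t\rangle\,dt$ cancel. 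What survives is purely the martingale part
\[
dg_t^i = \frac{1}{h}\langle \nabla q^i, \sigma_t^{1/2}\,dB_t\rangle - g_t^i\langle v_t, \sigma_t^{1/2}\,dB_t\rangle = \Bigl\langle e_i,\Bigl(\tfrac{\nabla q(X_t)}{h(X_t)} - g_t\otimes v_t\Bigr)\sigma_t^{1/2}\,dB_t\Bigr\rangle,
\]
which is exactly \eqref{eq:dgt} with $\Gamma_t$ as in \eqref{eq:defGammat}.

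There is no real obstacle; the only point requiring care is bookkeeping of the vector/matrix indices for the vector-valued $q$ and the fact that Fact \ref{fact:harmonicext} must be applied componentwise to each coordinate of $q$. As a sanity check, the conclusion is consistent with the corollary above stating that $g_t$ is a martingale, since the computation shows $dg_t$ has no drift.
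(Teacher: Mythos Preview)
Your proof is correct and follows essentially the same route as the paper's own argument: apply It\^{o}'s formula to the quotient $q(X_t)/h(X_t)$, use that $\sigma_t$ is diagonal together with Fact~\ref{fact:harmonicext} to kill the second-order corrections in $dq^i(X_t)$ and $dh(X_t)$, and then verify that the four drift contributions from the quotient rule cancel in pairs. The paper carries this out in vector notation in one chain of equalities, while you work coordinatewise and spell out the cancellations more explicitly, but the computation is the same.
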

Our final definition is that of the matrix-valued stochastic process
\begin{equation}\label{eq:defHt}
H_t := \EE[ (g_\infty - g_t)^{\otimes 2} | \FF_t ].
\end{equation}
According to formula \eqref{eq:com} and since $g_t$ is a martingale, we have that also

	\begin{align*}
		H_t &= \frac{1}{h(X_t)} \sum_{y \in \DC} e^{f(y)} w(X_t,y) g(y)^{\otimes 2}  - g_t \otimes g_t \\
		& \stackrel{\eqref{eq:eta}}{=} \int_{\DC} g_{\nu}(y)^{\otimes 2} d \tilt_{\eta(X_t)} \nu(y)  - \left ( \int_{\DC} g_{\nu}(y) d \tilt_{\eta(X_t)} \nu(y) \right )^{\otimes 2}.
	\end{align*}
Recalling that
$$
\HH \bigl (\tilt_{\eta(X_t)} \nu \bigr) = \int_{\DC} g_{\tilt_{\eta(X_t)} \nu}(y)^{\otimes 2} d \tilt_{\eta(X_t)} \nu(y)  - \left ( \int_{\DC} g_{\tilt_{\eta(X_t)} \nu}(y) d \tilt_{\eta(X_t)} \nu(y) \right )^{\otimes 2},
$$
we immediately see that $H_0 = \HH(\nu)$. Furthermore, while the matrices $\HH(\tilt_{\eta(X_t)} (\nu))$ and $H_t$ are not exactly equal, they are close to each other when the coordinates of $X_t$ are small, as shown in the next technical lemma, whose proof appears in the appendix.

\begin{lemma} \label{lemHtHt}
Let $\theta \in \RR^n$ and let $\nu,\tilde \nu$ be probability measures on $\DC$. Define	
$$
A = \int_{\DC} g_{\nu}(y)^{\otimes 2} d \tilde \nu(y)  - \left ( \int_{\DC} g_{\nu}(y) d \tilde \nu(y) \right )^{\otimes 2}
$$	
and
$$
B = \int_{\DC} g_{\tilt_\theta \nu} (y)^{\otimes 2} d \tilde \nu(y)  - \left ( \int_{\DC} g_{\tilt_{\theta} \nu}(y) d \tilde \nu(y) \right )^{\otimes 2}.
$$
Then for all $1 \leq i \leq n$,
$$
e^{-4 \|\theta \|_\infty} B_{i,i} \leq A_{i,i} \leq e^{4 \|\theta \|_\infty} B_{i,i}.$$
\end{lemma}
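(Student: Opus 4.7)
The plan is to reduce the matrix inequality to a pointwise, coordinate-by-coordinate comparison between $g_\nu$ and $g_{\tilt_\theta \nu}$. The crucial observation is that, by formula \eqref{eq:defgp}, the $i$-th coordinate of $g_\nu(y)$ depends only on the pair of densities $e^{f(y_+)}, e^{f(y_-)}$, where $y_\pm$ differ from $y$ only in the $i$-th coordinate. Since $\langle \theta, y_+\rangle - \langle \theta, y_-\rangle = 2\theta_i$ and the normalizing constant cancels in the ratio, tilting multiplies the ratio $e^{f(y_+)}/e^{f(y_-)}$ by exactly $e^{2\theta_i}$, affecting only coordinate $i$ through $\theta_i$.

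Concretely, I would set $s(y) := \tfrac12 (f(y_+) - f(y_-))$, so that by \eqref{eq:defgp},
$$
\phi(y) := \langle g_\nu(y), e_i \rangle = \tanh(s(y)), \qquad \psi(y) := \langle g_{\tilt_\theta \nu}(y), e_i \rangle = \tanh(s(y) + \theta_i).
$$
Thus $\psi = T_{\theta_i} \circ \phi$ where $T_\theta(x) := \tanh(\mathrm{artanh}(x) + \theta) = \frac{x + \tanh\theta}{1 + x \tanh\theta}$. A direct computation gives
$$
T_\theta'(x) = \frac{1 - \tanh^2 \theta}{(1 + x\tanh \theta)^2},
$$
and for $x \in [-1,1]$, the factor $(1+x\tanh\theta)^{-2}$ ranges between $(1+|\tanh\theta|)^{-2}$ and $(1-|\tanh\theta|)^{-2}$. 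Using $\frac{1\pm|\tanh\theta|}{1\mp|\tanh\theta|} = e^{\pm 2|\theta|}$, one finds
$$
e^{-2|\theta|} \le T_\theta'(x) \le e^{2|\theta|} \quad \text{for all } x \in (-1,1).
$$
Consequently $T_{\theta_i}$ is bi-Lipschitz on $[-1,1]$ with constants $e^{\pm 2|\theta_i|}$, so for every $y, y' \in \DC$,
$$
e^{-2|\theta_i|} |\phi(y) - \phi(y')| \le |\psi(y) - \psi(y')| \le e^{2|\theta_i|} |\phi(y) - \phi(y')|.
$$

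Finally I would express the diagonal entries as variances: if $Y, Y'$ are i.i.d.\ samples from $\tilde\nu$, then
$$
A_{i,i} = \tfrac{1}{2} \EE[(\phi(Y) - \phi(Y'))^2], \qquad B_{i,i} = \tfrac{1}{2} \EE[(\psi(Y) - \psi(Y'))^2].
$$
Squaring the pointwise bi-Lipschitz estimate and taking expectation yields $e^{-4|\theta_i|} B_{i,i} \le A_{i,i} \le e^{4|\theta_i|} B_{i,i}$, and bounding $|\theta_i| \le \|\theta\|_\infty$ gives the claim.

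There is no real obstacle here: the only observation needed is the identity $\psi = T_{\theta_i} \circ \phi$, which follows from the tilting being coordinate-diagonal on the log-density, and then the bi-Lipschitz estimate on the Möbius map $T_{\theta_i}$ is elementary.
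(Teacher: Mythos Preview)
Your proof is correct and essentially identical to the paper's argument: both rest on the identity $\langle g_{\tilt_\theta\nu}(y),e_i\rangle = u_{\theta_i}(\langle g_\nu(y),e_i\rangle)$ with $u_s(z)=\tanh(\tanh^{-1}(z)+s)$, followed by the Lipschitz bound $|u_s'|\le e^{2|s|}$ and the resulting variance comparison. Your presentation is slightly more explicit in that you compute $T_\theta'$ directly and obtain both inequalities at once via the bi-Lipschitz estimate and the i.i.d.-copy representation of the variance, whereas the paper states only one direction and leaves the other to symmetry; but these are cosmetic differences.
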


An immediate consequence of this lemma is that under the event $X_t \in [-1/2,1/2]^n$, we have $\exp (2 |\langle \eta(X_t), e_i \rangle |) = \frac{1+ | \langle X_t, e_i \rangle |}{1 - | \langle X_t, e_i \rangle |} \leq 3$ and therefore
\begin{equation}\label{eq:HtHt}
X_t \in [-1/2,1/2]^n \Rightarrow \tr (\sigma \HH(\tilt_{\eta(X_t)} \nu ) ) \leq 9 \tr \left (\sigma H_t \right )
\end{equation}
for every positive-definite diagonal matrix $\sigma$.

Recall that our final objective is to control $\tr \HH(\tilt_{\eta(X_t)} (\nu))$. In view of the above equation it therefore suffices to control the trace of $H_t$. A key fact in the proof of Proposition \ref{prop:step2}, which we will see later on, is that the matrix $H_t$ is, in a sense, controlled by the matrix $\Gamma_t$. Intuitively, this means that the martingale $g_t$ has to be moving quickly whenever $\mathrm{Cov}(g_\infty)$ is big.

\subsubsection{Proof of Proposition \ref{prop:step2}}

The following simple observation will help us exploit the complexity condition.
\begin{observation} \label{obs:convhull}
	For every $t \geq 0$ we have, almost surely, $g_t \in \mathbf{Conv}(\{g(y): y \in \DC \} )$.
\end{observation}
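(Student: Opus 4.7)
The observation is an immediate consequence of the martingale identity established in the corollary just above it, namely $g_t = \EE[g(X_\infty) \mid \FF_t]$, combined with the fact that $X_\infty \in \DC$ almost surely. So the plan is very short: unpack these two facts and apply the standard principle that conditional expectation preserves membership in convex sets.

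More explicitly, I would first note that $\DC$ is finite, so $K := \mathbf{Conv}(\{g(y): y \in \DC\})$ is a compact convex polytope in $\RR^n$, and $g(X_\infty)$ is a bounded random vector taking values in $K$ almost surely (indeed, in the finite set of vertices of $K$). Then I would invoke the already-proven identity $g_t = \EE[g(X_\infty) \mid \FF_t]$ and conclude using the following general fact: if $Y$ is an integrable random vector with $Y \in K$ a.s.\ for a closed convex set $K$, then $\EE[Y \mid \mathcal{G}] \in K$ a.s.\ for any $\sigma$-algebra $\mathcal{G}$. A clean way to verify this is via supporting half-spaces: for any $(\theta, c) \in \RR^n \times \RR$ with $K \subseteq \{x: \langle \theta, x \rangle \leq c\}$, the set $A := \{\langle \theta, \EE[Y \mid \mathcal{G}] \rangle > c\}$ is $\mathcal{G}$-measurable, so
\[
c\,\PP(A) \geq \EE[\mathbf{1}_A \langle \theta, Y \rangle] = \EE[\mathbf{1}_A \langle \theta, \EE[Y \mid \mathcal{G}] \rangle] \geq c\,\PP(A) + \delta \PP(A)
\]
for some $\delta > 0$ on $A$, forcing $\PP(A) = 0$. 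Intersecting over a countable family of supporting half-spaces that cut out $K$ (finitely many suffice here since $K$ is a polytope) yields $\EE[Y \mid \mathcal{G}] \in K$ a.s.

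There is no real obstacle to this argument; the observation is essentially a restatement of the martingale representation $g_t = \EE[g(X_\infty) \mid \FF_t]$. In fact, since $\DC$ is finite, one could also argue purely combinatorially: given $\FF_t$, the conditional law of $X_\infty$ is a probability measure $\pi_t$ on $\DC$ (explicitly given by formula \eqref{eq:com}), and $g_t = \sum_{y \in \DC} \pi_t(\{y\}) g(y)$ is then manifestly a convex combination of the points $\{g(y): y \in \DC\}$, hence lies in $K$. This avoids any appeal to general measure-theoretic facts and makes the statement essentially tautological given the setup.
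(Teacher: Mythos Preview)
Your proposal is correct and matches the paper's argument essentially verbatim: the paper also writes $g_t = \sum_{y \in \DC} g(y)\,\PP(X_\infty = y \mid \FF_t)$ using the martingale identity and the fact that $X_\infty \in \DC$ almost surely, and concludes this is a convex combination. Your final ``purely combinatorial'' paragraph is exactly the paper's proof; the detour through supporting half-spaces is unnecessary here but harmless.
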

\begin{proof}
	Since $g_t$ is a martingale, and since almost surely there exists some $y \in \DC$ such that $g_\infty = g(y)$, it follows that $g_t$ can be written as 
	$$
	g_t = \sum_{y \in \DC} g(y) \PP(X_\infty = y | \FF_t)
	$$
	which is a convex combination of vectors in the set $\{g(y), y \in \DC\}$.
\end{proof}

Next, we will need the following lemma which will help us make sense of the matrix $\Gamma_t$ defined in \eqref{eq:defGammat}. 
\begin{lemma} \label{lem:diagequal}
	One has almost surely, for all $t \geq 0$ and all $i \in [n]$,
	$$
	\left (\EE[g_\infty \otimes v_\infty | \FF_t ] \right )_{i,i} = \left (\frac{\nabla q (X_t) }{h(X_t)} \right )_{i,i}.
	$$
\end{lemma}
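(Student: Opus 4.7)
The plan is to unwind both sides as sums over $\DC$ via formula \eqref{eq:com} and Fact \ref{fact:harmonicext}, and then observe that the diagonal entries match exactly because of an algebraic coincidence peculiar to the functions $g_i$ and $v_i$.

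First I would apply equation \eqref{eq:com} to the scalar test function $\varphi(y) = g(y)_i v(y)_i$ to rewrite the left-hand side as
\begin{equation*}
\bigl(\EE[g_\infty \otimes v_\infty | \FF_t]\bigr)_{i,i} = \EE[g(X_\infty)_i \, v(X_\infty)_i | \FF_t] = \frac{1}{h(X_t)} \sum_{y \in \DC} g(y)_i v(y)_i w(X_t, y) e^{f(y)}.
\end{equation*}
For the right-hand side, since $q(x) = \sum_{y \in \DC} g(y) w(x,y) e^{f(y)}$ is the harmonic extension of $y \mapsto g(y) e^{f(y)}$, the $i$-th coordinate $q_i(x)$ is the harmonic extension of $g(y)_i e^{f(y)}$. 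By Fact \ref{fact:harmonicext}, $\partial_i q_i(x)$ is the harmonic extension of $\partial_i(g(\cdot)_i e^{f(\cdot)})$, so
\begin{equation*}
\left(\frac{\nabla q(X_t)}{h(X_t)}\right)_{i,i} = \frac{\partial_i q_i(X_t)}{h(X_t)} = \frac{1}{h(X_t)} \sum_{y \in \DC} w(X_t, y) \, \partial_i\bigl(g(y)_i e^{f(y)}\bigr).
\end{equation*}

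The heart of the matter is the identity $\partial_i\bigl(g(y)_i e^{f(y)}\bigr) = g(y)_i \, \partial_i e^{f}(y)$. This is where one needs the explicit definition \eqref{eq:defgp}: the quantity $g(y)_i = \frac{e^{f(y_+)} - e^{f(y_-)}}{e^{f(y_+)} + e^{f(y_-)}}$ depends only on the coordinates $y_j$ with $j \neq i$, where $y_\pm$ are the two points agreeing with $y$ on coordinates $\neq i$ with $\pm 1$ in coordinate $i$. Thus $g(y_+)_i = g(y_-)_i$, so the factor $g(\cdot)_i$ pulls out of the discrete partial $\partial_i = \tfrac{1}{2}(\text{eval at }y_+ - \text{eval at }y_-)$ exactly as a constant would. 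Combined with $v(y)_i e^{f(y)} = \partial_i e^{f}(y)$, which is immediate from the definition \eqref{eq:defvnu} of $v$, this gives $\partial_i\bigl(g(y)_i e^{f(y)}\bigr) = g(y)_i v(y)_i e^{f(y)}$.

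Substituting this into the expression for the right-hand side makes it coincide term-by-term with the left-hand side, completing the proof. There is no real obstacle here — the only thing one must notice is the $i$-independence of $g(\cdot)_i$, which is exactly what fails for off-diagonal entries (and explains why the lemma is stated only for the diagonal).
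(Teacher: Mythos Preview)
Your proof is correct and follows essentially the same approach as the paper: both reduce the identity to the pointwise equality $\partial_i\bigl(g(y)_i e^{f(y)}\bigr) = g(y)_i v(y)_i e^{f(y)}$ on $\DC$, then invoke \eqref{eq:com} and Fact~\ref{fact:harmonicext}. The only difference is cosmetic --- the paper verifies the pointwise identity by an explicit computation with $e^{f(y_\pm)}$, whereas you observe more conceptually that $g(\cdot)_i$ is independent of the $i$-th coordinate and therefore factors out of $\partial_i$.
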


\begin{proof}
	Fix $i \in [n]$. For all $y \in \DC$ define $y_+$ to be the point equal to $y$ on every coordinate except maybe the $i$th coordinate, where it is equal to $+1$. Define $y_-$ analogously. We have for all $y \in \DC$,
	$$
	q_i(y) \stackrel{\eqref{eq:defgp} }{=}  e^{f(y)} \frac{e^{f(y_+)} - e^{f(y_-) } }{ e^{f(y_+)} + e^{f(y_-)}}
	$$
	and thus	
	$$
	\partial_i q_i(y) = \frac{1}{2} \frac{ \left (e^{f(y_+)} - e^{f(y_-) } \right )^2 }{ e^{f(y_+)} + e^{f(y_-)}}. 
	$$
	On the other hand, with the help of equation \eqref{eq:defgp} we have
	$$
	g(y)_i v(y)_i = \frac{e^{f(y_+)} - e^{f(y_-) } }{ e^{f(y_+)} + e^{f(y_-)}} \frac{e^{f(y_+)} - e^{f(y_-) }}{ 2 e^{f(y)} }.
	$$
	Consequently, we have that for all $y \in \DC$,
	\begin{equation} \label{eq:partial}
	\partial_i q_i(y) = e^{f(y)} g(y)_i v(y)_i
	\end{equation}
	
	Summing up, we get
	\begin{align*}
	\EE[(g_\infty \otimes v_\infty)_{i,i} | \FF_t ] ~& \stackrel{\eqref{eq:com}}{=} \frac{1}{h(X_t)} \sum_{y \in \DC} w(X_t,y) e^{f(y)} v(y)_i g(y)_i \\
	& \stackrel{\eqref{eq:partial}}{=} \frac{1}{h(X_t)} \sum_{y \in \DC} w(X_t,y) \partial_i q_i(y) \\
	& \stackrel{\eqref{eq:nablaharmonic}}{=} \frac{\partial_i q_i(X_t)}{h(X_t)}
	\end{align*}
	which completes the proof.	
\end{proof}

Now, consider the matrix
\begin{equation}
A_t := \EE[ (g_\infty - g_t) \otimes (v_\infty - v_t) | \FF_t ] = \EE[g_\infty \otimes v_\infty | \FF_t] - g_t \otimes v_t
\end{equation}
(where the second equality follows from the fact that $g_t$ and $v_t$ are martingales). The result of Lemma \ref{lem:diagequal} combined with equation \eqref{eq:defGammat} tells us that
\begin{equation}\label{eq:equaltraces}
\tr(\sigma_t^{1/2} \Gamma_t) = \tr(\sigma_t^{1/2} A_t).
\end{equation}
Our next objective is to prove:
\begin{lemma} \label{lem:HtAt}
We have almost surely for all $t \geq 0$,
\begin{equation}\label{eq:vtgt}
\tr(\sigma_t^{1/2} H_t) \leq 4 \tr( \sigma_t^{1/2} A_t  ).
\end{equation}
\end{lemma}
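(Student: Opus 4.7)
The plan is to reduce the trace inequality to a pointwise inequality $H_t^{i,i}\le 4 A_t^{i,i}$ valid coordinate-wise on the event $\{(\sigma_t)_{i,i}=1\}$, and then sum against the weights $(\sigma_t^{1/2})_{i,i}\in\{0,1\}$. The key point is that on this event one has $|\langle X_t,e_i\rangle|<1$, and correspondingly the linear-fractional map $\zeta_{x_i}$ encoding the relation between $g_\nu$ and $v_\nu$ has derivative bounded below by $1/4$, which is precisely where the constant $4$ on the right-hand side comes from.

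To execute the plan, fix $i\in[n]$, condition on $\FF_t$, and write $x=X_t$, $Y=X_\infty$, $G=\langle g_\infty,e_i\rangle$ and $V=\langle v_\infty,e_i\rangle$. Two algebraic facts drive the proof. First, by \eqref{eq:defgp}, $G=g_\nu(Y)_i$ depends on $Y$ only through $Y_{-i}$. Second, by the definition $g_\nu=\zeta_y^{-1}\circ v_\nu$, we have $V=\zeta_{Y_i}(G)=G/(1+G Y_i)$. Given $Y_{-i}$ and $\FF_t$, the conditional law of $Y_i$ is Bernoulli on $\{\pm 1\}$ with weights proportional to $(1\pm x_i)\,e^{f(Y_{-i},\pm 1)}$, as follows directly from the explicit form $y\mapsto w(X_t,y)e^{f(y)}/h(X_t)$ provided by \eqref{eq:com}. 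Plugging this into $V=\zeta_{Y_i}(G)$ and using the $(a,b)$-parametrization of \eqref{eq:gv} with $a=e^{f(Y_{-i},-1)}$, $b=e^{f(Y_{-i},+1)}$, a one-line cancellation (both values $V(Y_i=\pm 1)$ share the numerator $b-a$) yields
\[\EE[V\mid Y_{-i},\FF_t]=\frac{b-a}{(1+x_i)b+(1-x_i)a}=\zeta_{x_i}(G).\]
Since $G$ is $\sigma(Y_{-i})$-measurable, the tower property then gives
\[A_t^{i,i}=\mathrm{Cov}(G,V\mid \FF_t)=\mathrm{Cov}\bigl(G,\zeta_{x_i}(G)\,\bigm|\,\FF_t\bigr),\qquad H_t^{i,i}=\mathrm{Var}(G\mid \FF_t).\]

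The final step is a standard variance–covariance comparison. The function $\phi(g):=\zeta_{x_i}(g)=g/(1+gx_i)$ is smooth and monotone on $[-1,1]$ with $\phi'(g)=(1+gx_i)^{-2}\ge (1+|x_i|)^{-2}\ge 1/4$ whenever $(\sigma_t)_{i,i}=1$. Introducing an independent copy $G'$ of $G\mid\FF_t$ and using the standard identity $2\,\mathrm{Cov}(G,\phi(G)\mid\FF_t)=\EE[(G-G')(\phi(G)-\phi(G'))\mid\FF_t]$, the mean value theorem applied to $\phi$ gives $(G-G')(\phi(G)-\phi(G'))\ge \tfrac14(G-G')^2$, whence
\[H_t^{i,i}=\mathrm{Var}(G\mid\FF_t)\le 4\,\mathrm{Cov}\bigl(G,\phi(G)\bigm|\FF_t\bigr)=4\,A_t^{i,i}\]
on $\{(\sigma_t)_{i,i}=1\}$. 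Multiplying by $(\sigma_t^{1/2})_{i,i}$ and summing over $i$ produces the claimed trace inequality. The main place where one must be a little careful is in verifying that the conditional expectation of $V$ given $Y_{-i}$ really equals $\zeta_{x_i}(G)$ — this is where the specific choice of the drift $v(x)$ in \eqref{eq:defvx2} pays off — but the computation collapses after exploiting the shared numerator in the two branches of $V$.
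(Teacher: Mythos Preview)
Your proof is correct and follows essentially the same route as the paper. The paper packages your conditional identity $\EE[V\mid Y_{-i},\FF_t]=\zeta_{x_i}(G)$ into the separate Lemma~\ref{lem:vtcoord} (specifically \eqref{eq:vtcoord2}, which is your identity integrated against any $Y_{-i}$-measurable test function), and then carries out the same monotone comparison via $\zeta_{x_i}'\ge 1/4$; your independent-copy argument and the paper's subtraction of $\zeta_x(\overline G)$ are equivalent ways to turn the derivative bound into the covariance inequality.
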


In order to prove this lemma, we first formulate an intermediate technical lemma, whose proof is postponed to the appendix.
\begin{lemma} \label{lem:vtcoord}
For all $t \geq 0$ and $i \in [n]$ we have almost surely	
\begin{equation}\label{eq:vtcoord}
\langle v_t, e_i \rangle = \left . \EE \left [ \frac{\langle e_i, g_\infty \rangle}{1 + \langle X_t, e_i \rangle \langle e_i, g_\infty \rangle} \right | \FF_t \right ] = \EE \left [ \zeta_{\langle X_t, e_i \rangle} \left (\langle e_i, g_\infty \rangle\right ) | \FF_t \right ].
\end{equation}
and
\begin{equation}\label{eq:vtcoord2}
\EE[ \langle g_\infty - g_t, e_i \rangle  \langle v_\infty, e_i \rangle | \FF_t ] = \EE[ \langle g_\infty - g_t, e_i \rangle \zeta_{\langle X_t, e_i \rangle}(\langle g_\infty, e_i \rangle) | \FF_t].
\end{equation}
Moreover, for all $x \in \CC$ and $i \in [n]$ one has
\begin{equation}\label{eq:vtbound2}
\langle x, e_i \rangle \langle v(x), e_i \rangle \leq 1
\end{equation}
and under the additional assumption $x \in \left [-\tfrac{1}{2}, \tfrac{1}{2} \right ]^n$,
\begin{equation}\label{eq:vtbound}
|\langle v(x), e_i \rangle| \leq 2.
\end{equation}
\end{lemma}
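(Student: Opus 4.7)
The proof rests on a single pairing computation. Fix $i \in [n]$ and, for $y\in \DC$, let $y_\pm$ denote the point obtained from $y$ by replacing the $i$-th coordinate with $\pm 1$, and set $a = e^{f(y_-)}$, $b = e^{f(y_+)}$, both depending only on $y_{-i}$. With $\tilde w(y_{-i}) = \tfrac12 \prod_{j\neq i}(1+x_j y_j)/2$, we have $w(x,y_+) = (1+x_i)\tilde w$ and $w(x,y_-)=(1-x_i)\tilde w$, so that
\begin{equation*}
h(x) = \sum_{y_{-i}} \tilde w(y_{-i})\bigl[(1+x_i)b+(1-x_i)a\bigr], \qquad \partial_i h(x) = \sum_{y_{-i}} \tilde w(y_{-i})(b-a).
\end{equation*}
Moreover, by equation \eqref{eq:gv}, $g(y)_i = (b-a)/(b+a)$ depends only on $y_{-i}$, and a direct simplification gives the crucial identity
\begin{equation*}
\zeta_{x_i}(g(y)_i)\cdot\bigl[(1+x_i)b+(1-x_i)a\bigr] = b-a.
\end{equation*}

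To prove \eqref{eq:vtcoord}, I apply the representation formula \eqref{eq:com} with $\varphi(y)=\zeta_{X_{t,i}}(g(y)_i)$. Pairing $y_+$ with $y_-$ and using that $\zeta_{x_i}(g(y)_i)$ does not depend on $y_i$, the above identity collapses each pair's contribution to $\tilde w(b-a)$. Summing over $y_{-i}$ gives $\partial_i h(X_t)/h(X_t) = \langle v_t,e_i\rangle$. For \eqref{eq:vtcoord2}, the same pairing works for any function $\psi(y)$ of $y_{-i}$ only: using $v(y_\pm)_i e^{f(y_\pm)} = (b-a)/2$, one checks
\begin{equation*}
\sum_y \psi(y)v(y)_i w(X_t,y)e^{f(y)} = \sum_{y_{-i}} \psi(y)\tilde w(b-a) = \sum_y \psi(y)\zeta_{X_{t,i}}(g(y)_i) w(X_t,y)e^{f(y)},
\end{equation*}
and applying \eqref{eq:com} with $\psi(y) = g(y)_i - \langle g_t,e_i\rangle$ (which depends on $y$ only through $y_{-i}$ and is $\FF_t$-measurable via $g_t$) yields the claim.

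For the pointwise bounds, the formula for $v(x)_i$ derived above gives
\begin{equation*}
h(x) - x_i\,\partial_i h(x) = \sum_{y_{-i}}\tilde w(y_{-i})\bigl[(1+x_i)b+(1-x_i)a - x_i(b-a)\bigr] = \sum_{y_{-i}}\tilde w(y_{-i})(a+b)\ge 0,
\end{equation*}
which proves \eqref{eq:vtbound2}. Under the extra assumption $x\in [-\tfrac12,\tfrac12]^n$, both factors $(1\pm x_i)\ge \tfrac12$, so $h(x)\ge \tfrac12\sum \tilde w(a+b)$, while $|\partial_i h(x)|\le \sum\tilde w(a+b)$; dividing yields \eqref{eq:vtbound}.

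The only real bookkeeping step is the algebraic identity $\zeta_{x_i}(g(y)_i)[(1+x_i)b+(1-x_i)a]=b-a$, which is exactly the content of \eqref{eq:gv}; once this is in hand, everything else is immediate from pairing $y_+$ with $y_-$ and applying \eqref{eq:com}. I do not anticipate any substantive obstacle.
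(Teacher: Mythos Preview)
Your proof is correct and follows essentially the same approach as the paper: both arguments pair $y_+$ with $y_-$, reduce to sums over $y_{-i}$, and use the algebraic identity from \eqref{eq:gv} relating $\zeta_{x_i}(g(y)_i)$ to $(b-a)/[(1+x_i)b+(1-x_i)a]$; the paper packages this by computing $\EE[VP\mid \FF_t]$ for a generic test function $\tilde p$ with $\partial_i \tilde p\equiv 0$, while you spell out the $a,b,\tilde w$ bookkeeping explicitly. For the pointwise bounds \eqref{eq:vtbound2}--\eqref{eq:vtbound}, your direct computation of $h(x)-x_i\,\partial_i h(x)=\sum_{y_{-i}}\tilde w(a+b)\ge 0$ is in fact slightly cleaner than the paper's route, which deduces them by reinterpreting \eqref{eq:vtcoord} as a weighted average of $\zeta_{x_i}(g(y)_i)$ and then bounding $\zeta$ pointwise.
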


\begin{proof}[Proof of Lemma \ref{lem:HtAt}]
Fix a coordinate $i \in [n]$ and define $G=\langle g_\infty, e_i \rangle, V = \langle v_\infty, e_i \rangle$, $x = \langle X_t, e_i \rangle$ and $\overline G =\langle g_t, e_i \rangle$. Since $g_t$ is a martingale we have $\overline G = \EE [G | \FF_t]$. Our main step will be to show that
\begin{equation}\label{eq:lemgoal1}
 \EE[ (G-\overline G) V | \FF_t ] \geq \frac{1}{4} \EE[ (G-\overline G)^2 | \FF_t ].
\end{equation}
Assuming the latter inequality, defining $\sigma_i = (\sigma_t^{1/2})_{i,i}$ we can write
\begin{align*}
\tr(\sigma_t^{1/2} A_t)  ~& = \EE \left . \left [ \sum_{i=1}^n \sigma_i \langle g_\infty - g_t, e_i \rangle \langle v_\infty - v_t, e_i \rangle \right  | \FF_t \right ] \\
~& = \EE \left . \left [ \sum_{i=1}^n \sigma_i \langle g_\infty - g_t, e_i \rangle \langle v_\infty, e_i \rangle \right  | \FF_t \right ] \\
& \stackrel{\eqref{eq:lemgoal1}}{\geq} \frac{1}{4} \EE\left . \left [ \sum_{i=1}^n \sigma_i \langle g_\infty - g_t, e_i \rangle^2 \right  | \FF_t \right ]  = \frac{1}{4} \tr(\sigma_t^{1/2} H_t), 
\end{align*}
which finishes the proof. In order to prove \eqref{eq:lemgoal1}, we first note that the identity
\begin{equation}\label{eq:GV}
\EE[ (G-\overline G)V | \FF_t ] = \EE[ (G-\overline G) \zeta_{x}(G) | \FF_t]
\end{equation}
follows from \eqref{eq:vtcoord2}. A calculation shows that for all $x,y \in (-1,1)$,
$$
\frac{d}{dy} \zeta_{x}(y) = \frac{d}{dy} \frac{y}{1+xy} = \frac{1}{(1+xy)^2} \geq \frac{1}{4}
$$
which implies that
\begin{equation}\label{eq:dz}
\left | \zeta_{x}(G) - \zeta_x(\overline G) \right | \geq \frac{1}{4} |G - \overline G|.
\end{equation}
Therefore,
\begin{align*}
\EE[ (G-\overline G)V | \FF_t ] ~& \stackrel{\eqref{eq:GV}}{=} \EE[ (G-\overline G) \zeta_{x}(G) | \FF_t] \\
&= \EE \left [ (G-\overline G) (\zeta_{x}(G) - \zeta_x(\overline G)) | \FF_t \right ] \\
&= \EE \left [ \left |G-\overline G\right | \cdot \left |\zeta_{x}(G) - \zeta_x(\overline G) \right | | \FF_t \right ] \\
&\stackrel{ \eqref{eq:dz} }{\geq} \frac{1}{4} \EE \left [ \left (G-\overline G\right )^2  | \FF_t \right ],
\end{align*}
which is \eqref{eq:lemgoal1}. The proof is complete.
\end{proof}
Define for all $t$,
$$
I_t = \left | \left \{i \in [n]; ~ \langle e_i, \sigma_t e_i \rangle = 0 \right \} \right |.
$$
and for all $0 < \eps < 1/2$, define the stopping time
$$
\TTT_{\eps} = \min \left \{ t>0: ~ \Vert X_t \Vert_2 = \eps \sqrt{n} \mbox{ or } I_t \geq 2 \exp \left (- \frac{1}{32 \eps^2} \right ) n \right \} \wedge 1.
$$
Observe that by definition $\langle e_i, H_t e_i \rangle = \mathrm{Var}[\langle g_\infty, e_i \rangle | \FF_t] \leq 1$. Therefore, for all $t \leq \TTT_\eps$, 
\begin{align} 
\tr(\sigma_t^{1/2} H_t) \geq \tr(H_t) - I_t \geq \tr(H_t) - 2 \exp(-1/(32 \eps^2)) n. \label{eq:traces}
\end{align}

The following lemma is the main point where Gaussian width comes to play. Its proof is, roughly speaking, an application of the divergence theorem for the vector field $\frac{q}{h}$.
\begin{lemma} \label{lem:divergence}
Let $B_t$ be a Brownian motion and let $g_t$ be a martingale, both adapted to a filtration $\FF_t$. Suppose that $d g_t = \tilde \Gamma_t d B_t$ for some matrix-valued process $\tilde \Gamma_t$ satisfying $\tr \left (\tilde \Gamma_t \right ) \geq 0$ almost surely for all $t$. Assume that there exists a set $K \subset \RR^n$ such that for all $t$ one has $g_t \in K$. Then one has for all $t>0$ and $\alpha > 1$,
$$
\PP \left ( \min_{s \leq t} \tr(\tilde \Gamma_s) > \alpha \tfrac{\MW(K)}{\sqrt{t}} \right ) < \frac{1}{\alpha}.
$$
\end{lemma}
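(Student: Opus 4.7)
The plan is to apply It\^{o}'s formula to the real-valued semimartingale $\langle g_t, B_t \rangle$, extract $\int_0^t \tr(\tilde\Gamma_s)\,ds$ as its drift, bound the expectation of the left-hand side via the Gaussian width of $K$, and conclude by Markov's inequality.

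First I would expand $d\langle g_t, B_t \rangle$ using It\^{o}'s formula. Since $dg_t = \tilde\Gamma_t\,dB_t$ and $dB_t$ has covariation $\Id\,dt$, the cross-variation between the $i$th coordinates satisfies $d[g^i,B^i]_t = (\tilde\Gamma_t)_{ii}\,dt$, so summing over $i$ gives
$$d\langle g_t, B_t\rangle = \langle g_t, dB_t\rangle + \langle B_t, dg_t\rangle + \tr(\tilde\Gamma_t)\,dt.$$
The finiteness of $\MW(K)$ together with $g_t \in K$ forces $g_t$ to be (essentially) bounded, so the first stochastic integral is a genuine martingale. The second is a local martingale whose expectation vanishes by a standard localization argument, since the bounded $L^2$-martingale $g_t$ has finite accumulated quadratic variation $\EE\int_0^t \|\tilde\Gamma_s\|_{\mathrm{HS}}^2\,ds$. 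Taking expectations (and using $B_0 = 0$) yields the key identity
$$\EE \langle g_t, B_t \rangle = \EE \int_0^t \tr(\tilde\Gamma_s)\,ds.$$

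Next I would bound the left-hand side by the Gaussian width. Since $g_t \in K$ almost surely, $\langle g_t, B_t\rangle \leq \sup_{x \in K}\langle x, B_t\rangle$, and using the scaling $B_t \stackrel{d}{=} \sqrt{t}\,\Gamma$ for $\Gamma \sim N(0,\Id)$,
$$\EE \langle g_t, B_t\rangle \leq \EE \sup_{x \in K}\langle x, B_t\rangle = \sqrt{t}\,\MW(K).$$
Finally, on the event $\{\min_{s \leq t}\tr(\tilde\Gamma_s) > \alpha \MW(K)/\sqrt{t}\}$, the non-negative integrand exceeds the threshold throughout $[0,t]$, so $\int_0^t \tr(\tilde\Gamma_s)\,ds > \alpha\sqrt{t}\,\MW(K)$. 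Markov's inequality applied to the non-negative random variable $\int_0^t \tr(\tilde\Gamma_s)\,ds$ (whose expectation is at most $\sqrt{t}\,\MW(K)$) yields the claimed bound $1/\alpha$.

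The only real subtlety, and thus the main obstacle, is justifying that the It\^{o} correction identity $\EE\langle g_t, B_t\rangle = \EE\int_0^t \tr(\tilde\Gamma_s)\,ds$ holds, i.e.\ that both stochastic integrals in the It\^{o} expansion are true martingales rather than merely local martingales. This is routine given the boundedness of $g_t$ in $K$, but deserves a sentence or two. Everything else is a straightforward two-line application of It\^{o}, the definition of $\MW$, and Markov.
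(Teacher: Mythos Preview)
Your proof is correct and follows essentially the same route as the paper's: both derive the identity $\EE\langle g_t,B_t\rangle=\EE\int_0^t\tr(\tilde\Gamma_s)\,ds$ (the paper phrases this as It\^{o}'s isometry), bound the left side by $\sqrt{t}\,\MW(K)$ using $g_t\in K$, and finish with Markov's inequality. The only cosmetic difference is that the paper applies Markov to $\min_{s\le t}\tr(\tilde\Gamma_s)$ after first bounding its expectation, whereas you apply it directly to $\int_0^t\tr(\tilde\Gamma_s)\,ds$ and then pass to the sub-event; these are equivalent.
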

\begin{proof}
The key idea is to observe that, by an application of It\^{o}'s isometry for the processes $g_{t}$ and $B_{t}$, we have that
\begin{align*}
\EE [ \langle g_{t}, B_{t} \rangle] ~& = \EE \left [ \int_0^{t} \tr \left (\tilde \Gamma_s \right ) ds \right ] \\
& \geq \EE \left [ t \min_{0 \leq s\leq t} \tr(\tilde \Gamma_s) \right ].
\end{align*}
On the other hand, since $g_t \in K$ almost surely for all $t$, we have
$$
\MW(K) \geq \EE \left [ \left \langle g_{t}, \tfrac{B_t}{\sqrt{t}} \right \rangle \right ].
$$
Therefore,
$$
\EE \left [ \min_{s\leq t} \tr(\tilde \Gamma_s) \right ] \leq \frac{\MW(K)}{\sqrt{t}}.
$$
Since, by assumption, $\tr(\tilde \Gamma_t)$ is nonnegative, the lemma is concluded via an application of Markov's inequality.
\end{proof}

The following statement follows easily from Gaussian tail bounds.
\begin{fact} \label{fact:tailmax}
Let $Z_t$ be a (one dimensional) It\^{o} process satisfying
$$
Z_0 = 0; ~~ d Z_t = r_t d B_t
$$
with $r_t$ being an adapted process satisfying $|r_t| < r$ almost surely, for some $r>0$. Then,
$$
\PP \left ( \max_{s \leq t} |Z_s| \geq \alpha \right ) < 2 \exp \left (-\frac{\alpha^2}{8r^2 t} \right )
$$
for all $\alpha,t>0$.
\end{fact}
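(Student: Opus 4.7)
The plan is to reduce the claimed tail bound to a standard Gaussian maximal inequality via an exponential (super)martingale argument. For any $\lambda \in \RR$, It\^o's formula shows that
$$
M_t^{\lambda} := \exp \left ( \lambda Z_t - \tfrac{1}{2} \lambda^2 \int_0^t r_s^2 ds \right )
$$
is a nonnegative local martingale, and the uniform bound $|r_s| < r$ ensures that it is in fact a true martingale with $\EE M_t^{\lambda} = 1$. Using the almost-sure inequality $\int_0^s r_u^2 du \leq r^2 s \leq r^2 t$ for every $s \leq t$, one has the inclusion
$$
\left \{ \max_{s \leq t} Z_s \geq \alpha \right \} \subseteq \left \{ \max_{s \leq t} M_s^{\lambda} \geq \exp \bigl (\lambda \alpha - \tfrac{1}{2} \lambda^2 r^2 t \bigr ) \right \}, \qquad \lambda > 0.
$$

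Doob's maximal inequality for nonnegative martingales then gives
$$
\PP \left ( \max_{s \leq t} Z_s \geq \alpha \right ) \leq \exp \bigl (-\lambda \alpha + \tfrac{1}{2} \lambda^2 r^2 t \bigr ),
$$
and optimizing by choosing $\lambda = \alpha/(r^2 t)$ yields the Gaussian bound $\exp \bigl (-\alpha^2/(2 r^2 t) \bigr )$. Running exactly the same argument on $-Z_t$ (i.e.\ using $M_t^{-\lambda}$) and taking a union bound produces
$$
\PP \left ( \max_{s \leq t} |Z_s| \geq \alpha \right ) \leq 2 \exp \left ( -\frac{\alpha^2}{2 r^2 t} \right ),
$$
which is (considerably) stronger than the claimed $2 \exp(-\alpha^2/(8 r^2 t))$; the slack in the constant is harmless.

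A fully equivalent alternative is to invoke the Dambis--Dubins--Schwarz theorem to write $Z_t = \tilde B_{\langle Z \rangle_t}$ for some standard Brownian motion $\tilde B$, observe that $\langle Z \rangle_t = \int_0^t r_s^2 ds \leq r^2 t$ almost surely, and then apply the standard reflection-principle estimate $\PP(\max_{u \leq T} |\tilde B_u| \geq \alpha) \leq 2 \exp(-\alpha^2/(2T))$ with $T = r^2 t$. There is no genuine obstacle in either route; the only (mild) point requiring care is the justification that $M_t^\lambda$ is a true martingale rather than merely a local one, which follows immediately from the uniform bound $|r_s| < r$ on any finite time interval.
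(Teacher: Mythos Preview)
Your proof is correct. The paper does not actually give a proof of this fact; it merely asserts that ``the following statement follows easily from Gaussian tail bounds,'' so your exponential-martingale/Doob argument (or the equivalent DDS time-change) is precisely the standard derivation the paper is appealing to, and your sharper constant $2$ in place of $8$ in the exponent is of course fine.
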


As a corollary, we get
\begin{lemma} \label{lem:Tbig}
For all $n \geq 4$ and for all $\eps \in \left (0 ,1/8 \right )$, we have
$$
\PP \left (\TTT_\eps \leq \frac{\eps^2}{16} \right ) < 1/n.
$$
\end{lemma}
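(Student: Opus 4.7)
Let $t_0 := \eps^2/16$. Since $\eps < 1/8$ gives $t_0 < 1/1024$, the condition $\TTT_\eps = 1$ in the definition is never triggered, so $\{\TTT_\eps \leq t_0\} = A \cup B$ where
\[
A := \bigl\{\exists s \leq t_0:~\|X_s\|_2 \geq \eps\sqrt n\bigr\}, \qquad B := \bigl\{I_{t_0} \geq 2 n e^{-1/(32\eps^2)}\bigr\}.
\]
I will bound $\PP(A)$ and $\PP(B)$ separately so that each is $<1/(2n)$. Both bounds come from one-dimensional It\^o calculus followed by a Chernoff-type inequality.

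\paragraph{Bound on $\PP(A)$.}
First I would apply It\^o to $\|X_t\|_2^2$ and write
\[
\|X_t\|_2^2 = N_t + L_t, \qquad dN_t = 2\sum_i X_t^i \sigma_t^i \, dB_t^i, \qquad dL_t = \Bigl(\sum_i \sigma_t^i + 2\sum_i X_t^i \sigma_t^i v_t^i\Bigr) dt.
\]
Using $\sum_i\sigma_t^i \leq n$ together with \eqref{eq:vtbound2}, which yields $|X_t^i v_t^i|\leq 1$, the drift satisfies $|L_t|\leq 3nt$. At the stopping time $\tau := \inf\{s: \|X_s\|_2^2 \geq \eps^2 n\}$, on the event $A$ we have $N_\tau \geq \eps^2 n - 3 n t_0 = 13 n\eps^2/16$; moreover, for $s \leq \tau \wedge t_0$, the inequality $\|X_s\|_2^2 \leq \eps^2 n$ combined with $(X_s^i)^2\sigma_s^i \leq (X_s^i)^2$ gives the \emph{crucial} path-wise bound
\[
\langle N\rangle_{\tau \wedge t_0} \leq 4 \int_0^{\tau\wedge t_0} \|X_r\|_2^2\,dr \leq 4\eps^2 n \cdot t_0 = n\eps^4/4.
\]
The standard exponential supermartingale inequality (Bernstein for continuous martingales) then yields
\[
\PP(A) \leq \exp\!\left(-\frac{(13n\eps^2/16)^2}{2\cdot n\eps^4/4}\right) = \exp(-169 n/128),
\]
which is less than $1/(2n)$ for every $n \geq 4$.

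\paragraph{Bound on $\PP(B)$.}
Split $X_t^i = Y_t^i + D_t^i$ with $Y_t^i := \int_0^t \sigma_s^i\, dB_s^i$ and $D_t^i := \int_0^t \sigma_s^i v_s^i\, ds$. The bound \eqref{eq:vtbound} gives $|D_t^i|\leq 2t_0 = \eps^2/8 \leq 1/256$, so $\tau_i := \inf\{s:|X_s^i|=1/2\}\leq t_0$ implies $\max_{s\leq t_0}|Y_s^i|\geq 1/4$. Fact~\ref{fact:tailmax} applied to $Y^i$ with $r=1$ (the diffusion coefficient is $\sigma_s^i \in \{0,1\}$) therefore yields $p := \PP(\tau_i\leq t_0) \leq 2 e^{-1/(8\eps^2)}$ for each $i$. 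Since the martingales $Y^i$ are pairwise orthogonal ($\langle Y^i,Y^j\rangle\equiv 0$ for $i\neq j$) with $\langle Y^i\rangle_t \leq t$, I would invoke Knight's theorem (after possibly augmenting the filtration by an auxiliary independent Brownian motion so that $\langle Y^i\rangle_\infty = \infty$) to produce \emph{independent} Brownian motions $\tilde B^1,\dots,\tilde B^n$ with $Y_t^i = \tilde B^i_{\phi^i(t)}$ where $\phi^i(t)\leq t$. The events $\{\tau_i\leq t_0\}$ are then dominated by the independent events $\{\max_{u\leq t_0}|\tilde B^i_u|\geq 1/4\}$, so $I_{t_0}$ is stochastically dominated by a $\mathrm{Bin}(n,p)$ random variable. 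For $k := \lceil 2n e^{-1/(32\eps^2)}\rceil$, a union bound gives
\[
\PP(B) \leq \binom{n}{k} p^k \leq \Bigl(\tfrac{enp}{k}\Bigr)^k,
\]
and one checks that $enp/k \leq e\cdot e^{-3/(32\eps^2)}\leq e^{-5}$ whenever $\eps<1/8$. A short two-case analysis (treating $k=1$, where the trivial bound $np < 1/(2n)$ holds directly because $1/\eps^2 \gg \log n$, and $k\geq 2$, where the factor $(enp/k)^k$ decays geometrically in $k$) confirms $\PP(B) < 1/(2n)$ uniformly in $n\geq 4$ and $\eps\in(0,1/8)$.

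\paragraph{Main obstacle.}
The computation for $\PP(A)$ is entirely routine once one notices the path-wise bound $\langle N\rangle_{\tau\wedge t_0} \leq n\eps^4/4$ coming from stopping before $\|X\|$ reaches $\eps\sqrt n$. The only genuinely delicate step is in the bound for $\PP(B)$: turning pairwise orthogonality of the martingales $Y^i$ into probabilistic independence via Knight's theorem. Without this independence, only Markov's inequality is available and its range of validity fails to be uniform in $n$ for $\eps$ near $1/8$. An alternative that avoids Knight's theorem would be to observe that on the regime $\eps \gtrsim 1/\sqrt{\log(1/(2\eps^2))}$, the event $B$ forces $\|X_{t_0}\|_2^2 \geq k/4 \geq \eps^2 n$ and is thus already included in $A$, so that only the complementary regime of very small $\eps$ needs the independence-based Chernoff bound; but this case analysis is substantially uglier than simply using Knight.
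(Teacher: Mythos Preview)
Your bound on $\PP(A)$ matches the paper's: both apply It\^o to $\|X_t\|_2^2$, bound the drift by $3n\,dt$ via \eqref{eq:vtbound2}, and control the martingale part with an exponential inequality (the paper uses Fact~\ref{fact:tailmax}; you track the quadratic variation a little more sharply, but the idea is identical).

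For $\PP(B)$ your route works, but the ``main obstacle'' you flag is self-inflicted. The paper avoids Knight's theorem entirely by dominating with the \emph{driving} Brownian motion $B$ rather than with $Y^i=\int\sigma^i\,dB^i$. The point is that for $s\le\tau_i$ one has $\sigma_r^i\equiv 1$, hence $Y_s^i=B_s^i$ exactly; so the same drift estimate $|D^i|\le 2t_0$ you use already gives
\[
\max_{s\le t_0}|B_s^i|<\tfrac14\ \Longrightarrow\ \tau_i>t_0.
\]
Setting $J_{t_0}:=\#\{i:\max_{s\le t_0}|B_s^i|\ge 1/4\}$, one has $I_{t_0}\le J_{t_0}$, and $J_{t_0}$ is a genuine $\mathrm{Bin}(n,p)$ because $B^1,\dots,B^n$ are independent to begin with. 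No time change, no filtration augmentation. The paper then finishes with a direct Chernoff--Hoeffding bound.

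One caution about your closing ``short two-case analysis'': the uniform estimate $enp/k\le e^{-5}$ that you record is \emph{not} by itself enough to conclude $(enp/k)^k<1/(2n)$ when $k=2$ and $n$ is large (e.g.\ $n>e^{10}/2$). What saves you is that in the regime $k\ge 2$ one also has $q:=e^{-1/(32\eps^2)}\le k/(2n)$, so $enp/k\le e(k/(2n))^3$ carries an explicit power of $n$; with this refinement the bound does go through. The paper's Chernoff--Hoeffding computation handles this dependence directly.
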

\begin{proof}
We have by It\^{o}'s formula and by \eqref{eq:mainsde},
$$
d \| X_t \|_2^2 = 2 \langle X_t, d X_t \rangle + d [X]_t = 2 \langle X_t, \sigma_t^{1/2} d B_t \rangle + 2 \langle X_t, \sigma_t v_t \rangle dt + \tr(\sigma_t) dt.
$$
The bound \eqref{eq:vtbound2} implies that $\langle X_t, e_i \rangle \langle v_t, e_i \rangle \leq 1$ for all $i \in [n]$ which, together with that fact that $\sigma_t$ is dominated by the identity, gives
$$
d \| X_t \|_2^2 \leq 2 \langle X_t, \sigma_t^{1/2} d B_t \rangle + 3n dt.
$$
Next, Fact \ref{fact:tailmax} with the fact that $\|X_t\|_2 \leq \eps \sqrt{n}$ for all $t \leq \TTT_\eps$, implies that
$$
\PP \left ( \max_{s \leq t \wedge \TTT_\eps} \int_0^s \langle X_r, \sigma_r^{1/2} d B_r \rangle \geq n \eps^2 / 2 \right ) < 2 \exp \left (-n \eps^2 / (8t) \right ).
$$
Setting $t = \frac{\eps^2}{16}$ gives
$$
\PP \left ( \max_{s \leq \tfrac{\eps^2}{16} \wedge \TTT_\eps } \| X_s\|_2^2 \geq \eps^2 n \right ) \leq 2 \exp(-n).
$$
Next, define 
$$
J_t = \left | \left \{ i \in [n]; ~ \max_{s \leq t} |\langle B_s, e_i \rangle| \geq 1/4 \right \} \right |.
$$ 
Then by Fact \ref{fact:tailmax}, $J_t$ is the sum of $n$ independent Bernoulli random variables, each with expectation bounded by $2\exp(-1/(8\eps^2))$. Recall that, by the Chernoff-Hoeffding Theorem, if $X_i$ are independent Bernoulli random variables with expectation $p$ and $S = \tfrac{1}{n} \sum_{i=1}^n X_i$, then
$$
\PP( S > \alpha ) \leq \left (\frac{p}{\alpha}\right )^{ \alpha n } \left (\frac{1-p}{1-\alpha}\right )^{ (1-\alpha) n } \leq \left (\frac{p}{\alpha}\right )^{ \alpha n } e^{\tfrac{3}{2} \alpha (1-\alpha) n} \leq \left (\frac{e^{3/2} p}{\alpha}\right )^{\alpha n }  , ~~ \forall p < \alpha < \tfrac{1}{2}.
$$
where the second inequality uses the fact that $\frac{1}{1-\alpha} \leq e^{3\alpha/2}$ for $\alpha \leq \tfrac{1}{2}$.
Taking $p=2\exp(-1/(8\eps^2))$ and $\alpha = \max(p^{1/4}, 1/n)$, this gives
\begin{align*}
\PP(J_t \geq 2 \exp(-1 / (32 \eps^2)) n) ~& = \PP \left (J_t \geq \max\left  (2 \exp(-1 / (32 \eps^2)) n,1 \right ) \right ) \\
& \leq \exp \left (  \tfrac{3}{4} \max(n p^{1/4}, 1)  \left ( \log p + 2 \right )\right ).
\end{align*}
The assumption $\eps < 1/8$ amounts to $p < e^{-7}$ and the expression $p^{1/4} (\log p + 2)$ is decreasing on the interval $(0,e^{-7})$. This implies $n p^{1/4} (\log p + 2) \leq - 4 \log n + 2$ for $p \geq n^{-4}$. We therefore get $\frac{3}{4} \max(n p^{1/4}, 1)\left (\log p + 2 \right ) \leq - 3 \log n + 2$ for all $p < e^{-7}$. Consequently,
$$
\PP(J_t > 2 \exp(-1 / (32 \eps^2)) n) \leq \frac{8}{n^3} \leq \frac{1}{2n}.
$$
By \eqref{eq:vtbound} one has $|\langle v_t, e_i \rangle| \leq 2$ for all $i \in [n]$ and $t \in [0,1]$ which gives
$$
\max_{s \leq t} |\langle B_s, e_i \rangle| < 1/4 \Rightarrow \max_{s \leq t} |\langle X_s, e_i \rangle| < 1/2, ~ \forall t \leq 1/8.
$$
Combining this with the fact that $I_t$ is increasing for $t \in [0,1]$, we conclude that $I_t \leq J_t$ for all $t \leq 1/8$. Finally, remarking that
$$
\TTT_\eps < t \leq 1 \Rightarrow  \max_{s \leq t \wedge \TTT_\eps } \| X_s\|_2^2 \geq \eps^2 n \mbox{ or } J_t \geq 2 \exp(-1 / (32 \eps^2)) n,
$$
and applying a union bound with respect to the events in the last display completes the proof.
\end{proof}

We can finally prove:
\begin{proof}[Proof of Proposition \ref{prop:step2}]
We intend to invoke Lemma \ref{lem:divergence} with the processes $g_t$ and $\tilde \Gamma_t = \sigma_t^{1/2} \Gamma_t$ where $\Gamma_t$ is defined as in \eqref{eq:defGammat}. Equations \eqref{eq:equaltraces} and \eqref{eq:vtgt}, together with the fact that $H_t$ is positive-definite, give
\begin{equation}\label{eq:traces2}
4 \tr \left (\tilde \Gamma_t \right ) \geq \tr \left (\sigma_t^{1/2} H_t \right ) \geq 0.
\end{equation}
Observation \ref{obs:convhull} ensures that $g_t \in K := \mathrm{Conv} \left ( \{g(y): y \in \DC \} \right ) $. Therefore we may invoke Lemma \ref{lem:divergence} with the choice $t = \frac{\eps^2}{16}$ to get
$$
\PP \left ( \min_{s \leq \tfrac{\eps^2}{16}} \tr \left (\sigma_s^{1/2} H_s \right ) > \tfrac{16 \alpha \MW(K)}{\eps} \right ) \leq \PP \left ( \min_{s \leq \tfrac{\eps^2}{16}} \tr(\tilde \Gamma_s) > \tfrac{4 \alpha \MW(K)}{\eps} \right ) < \frac{1}{\alpha}.
$$
Define
$$
\tau := \min \left  \{t : \tr \left (\sigma_t^{1/2} H_t \right ) \leq \tfrac{16 \alpha \MW(K) }{\eps} \right \} \wedge \TTT_\eps.
$$
A union bound gives
$$
\PP \left ( \tau < \TTT_\eps \right ) \geq 1 - \frac{1}{\alpha} - \PP \left (\TTT_\eps \leq \frac{\eps^2}{16} \right ).
$$
Remark that if either $D > 2^{-4} n$ or $n < 4$ then the result of the proposition follows trivially by taking $m$ to be supported at $0$, thus we may assume $n \geq 4$ and $\eps < 1/8$. Using Lemma \ref{lem:Tbig} therefore gives
\begin{align}\label{eq:taubound1}
\PP \left ( \tau < \TTT_{\eps} \right ) ~& \geq 1 - \frac{1}{\alpha} - \frac{1}{n}.
\end{align}
Applying equation \eqref{eq:decomposition} with the stopping time $\tau$ which tells us that for every test function $\varphi$,
$$
\int \varphi d \nu = \EE \left [ \int \varphi d \left ( \tilt_{\eta(X_\tau)} (\nu) \right ) \right ].
$$
Set the measure $m$ to be the law of $\eta(X_\tau)$, so that equation \eqref{eq:decompprop} holds. Since $\TTT_\eps \leq 1$ and by the definition of $\sigma_t$, we have $\Vert X_\tau \Vert_\infty \leq 1/2$ and $\Vert X_\tau \Vert_2 \leq \eps \sqrt{n}$. Since $|\eta'(z)| \leq 4/3$ for all $|z| < 1/2$ this implies that for $\theta : = \eta(X_\tau)$ one has $\Vert \theta \Vert_\infty \leq 2/3$ and $\Vert \theta \Vert_2 \leq \eps \sqrt{n}$. Therefore, the measure $m$ is supported on $[-1,1]^n \cap B(0, \eps \sqrt{n})$. 

Next, note that by definition of $\TTT_\eps$, we have
\begin{equation}\label{eq:sigmasmall}
\tr(\sigma_\tau^{1/2}) \geq n - 2 n e^{-1/(32 \eps^2)}.
\end{equation}
Apply equation \eqref{eq:HtHt} and use the definition of $\tau$ to get
$$
\tau < \TTT_\eps \Rightarrow \tr \left ( \sigma_\tau^{1/2} \HH(\tilt_{\eta(X_\tau)}(\nu)) \right ) \leq 10 \tr \left (\sigma_\tau^{1/2} H_\tau \right ) \leq \tfrac{2^8 \alpha \MW(K) }{\eps}
$$
which, together with equations \eqref{eq:taubound1} and \eqref{eq:sigmasmall} implies Equation \eqref{eq:Hnusmall}.  \\

It remains to show that under the additional assumption $\eps \leq \tfrac 1 8 \left (\log(4n/\MW(K) ) \right  )^{-1/2} \leq 1$, Equation \eqref{eq:Hnusmall} holds with $k=1$. To that end, an application of equation \eqref{eq:traces} gives
$$
\tau < \TTT_\eps \Rightarrow \tr(H_{\tau}) \leq \tfrac{16 \alpha \MW(K)}{\eps} + 2 \exp(-1 / (32 \eps^2)) n \leq \tfrac{20 \alpha \MW(K)}{\eps}.
$$
Applying Equation \eqref{eq:HtHt}, in a similar manner to the above, completes the proof.
\end{proof}

\section{The large deviation framework}

In this section we prove Corollary \ref{cor:main} and Theorem \ref{thm:largedev}. The following is a trivial consequence of the chain rule. Its proof is postponed to the appendix.
\begin{fact} \label{fact:chain}
	Let $\nu$ be measure on $\DC$ and let $\xi$ be a product measure satisfying $\EE[\xi] = \EE[\nu]$. Then $\KL(\nu \Vert \mu) \geq \KL(\xi \Vert \mu)$.
\end{fact}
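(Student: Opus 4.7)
The plan is to use the chain rule for KL divergence together with convexity of the Bernoulli entropy. Write $b = \EE_\nu[y] \in [-1,1]^n$, so that $\xi$ is the product of Bernoulli measures on $\{-1,1\}$ with means $b_1,\dots,b_n$. Denote by $h:[-1,1]\to\RR$ the function $h(s) = \frac{1+s}{2}\log(1+s) + \frac{1-s}{2}\log(1-s)$, which is the KL divergence of a $\pm 1$-Bernoulli of mean $s$ from the uniform distribution on $\{-1,1\}$. Since $h$ is (strictly) convex on $[-1,1]$, it will suffice to exhibit each term in $\KL(\nu\Vert\mu)$ as a Jensen-averaged version of the corresponding term in $\KL(\xi\Vert\mu)$.

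First I would decompose both divergences coordinate by coordinate. For any probability measure $\rho$ on $\DC$ and any ordering of the coordinates, the chain rule for relative entropy gives
\[
\KL(\rho\Vert\mu) \;=\; \sum_{i=1}^n \EE_{y_{<i}\sim\rho}\bigl[\KL\bigl(\rho_i(\cdot\mid y_{<i})\,\Vert\,\mu_i\bigr)\bigr],
\]
where $\rho_i(\cdot\mid y_{<i})$ is the conditional law of $y_i$ given the preceding coordinates under $\rho$, and $\mu_i$ is the uniform measure on $\{-1,1\}$ (since $\mu$ is the product uniform measure). Because $\rho_i(\cdot\mid y_{<i})$ is a measure on the two-point set, it is Bernoulli with some mean $\beta_i(y_{<i}) := \EE_\rho[y_i \mid y_{<i}]$, and therefore each conditional KL term equals $h\bigl(\beta_i(y_{<i})\bigr)$. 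Applied to $\xi$ this gives $\KL(\xi\Vert\mu) = \sum_i h(b_i)$ (the conditional means are deterministically $b_i$ because $\xi$ is a product), and applied to $\nu$ it gives $\KL(\nu\Vert\mu) = \sum_i \EE_{\nu}[h(\beta_i(y_{<i}))]$ with $\beta_i$ defined by $\nu$.

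The final step is to observe that $\EE_\nu[\beta_i(y_{<i})] = \EE_\nu[y_i] = b_i$ by the tower property, and then apply Jensen's inequality to the convex function $h$ coordinatewise:
\[
\EE_\nu\bigl[h(\beta_i(y_{<i}))\bigr] \;\geq\; h\bigl(\EE_\nu[\beta_i(y_{<i})]\bigr) \;=\; h(b_i).
\]
Summing over $i\in[n]$ yields $\KL(\nu\Vert\mu)\ge\sum_i h(b_i)=\KL(\xi\Vert\mu)$, as required. There is no real obstacle here: the only content is the chain rule and the convexity of $h$, which is the usual $\pm 1$ binary entropy.
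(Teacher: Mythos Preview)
Your proof is correct and is essentially the same as the paper's: the paper writes $\KL(\cdot\Vert\mu)=n\log 2-H(\cdot)$, applies the chain rule $H(Y)=\sum_i H(Y_i\mid Y_{<i})$, and uses $H(Y_i\mid Y_{<i})\le H(Y_i)=H(\tilde Y_i)$, which is exactly your Jensen step for the convex $h$ rewritten in entropy language.
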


\begin{proof} [Proof of Corollary \ref{cor:main}]
	Define $\hat f(x) = f(x) - \log \int e^{f} d \mu$, so that the measure $\nu$ defined by $d \nu = e^{\hat f(x)} d \mu$ is a probability measure. 
	Fix $\eps \in (0, \tfrac{1}{16})$ whose value will be chosen later on. Apply Theorem \ref{thm:main} with $\nu, \eps$ to obtain (using the second part of the theorem) a function $\tilde f: \DC \to \RR$ and the measures $\tilde \nu$ and $\xi'$ and a subset $I \subset [n]$. The theorem ensures us that
	\begin{equation}\label{eq:ens1}
	\max_{y \in \DC} | \hat f(y)-\tilde f(y)| \leq \eps n
	\end{equation}
	and that
	$$
	\W(\tilde \nu, \xi') \leq 2^6 \sqrt{\frac{n \Comp(f)}{\eps}}.
	$$				
	The last inequality clearly implies
	\begin{equation}\label{eq:xitnu}
	\left |\int f d \xi' - \int f d \tilde \nu \right | \leq 2^6 \Lip(f) \sqrt{\frac{n \Comp(f)}{\eps}}.
	\end{equation}
	Moreover, since $\xi' = \pi_I(\xi') \times \pi_{[n] \setminus I}(\tilde \nu)$,
	$$
	\int f (y) d \xi'(y) = \int_{\{-1,1\}^{[n] \setminus I} } \left (\int_{\{-1,1\}^{I}} f (z,w) d \pi_I (\xi' ) \bigl (z \bigr) \right ) d \pi_{[n] \setminus I}  ( \tilde \nu ) \bigl (w \bigr ).
	$$ 
	The above identity implies the existence of $w_0 \in \{-1,1\}^{ [n] \setminus I }$ for which $\int_{\{-1,1\}^I} f(z,w_0) d \pi_I (\xi') (z) \geq \int f d \xi'$.
	Define $\xi$ to be the restriction of $\xi'$ to $\{w_0\} \times \{-1,1\}^{I}$ and observe that $\xi$ is a product measure, since the marginal of $\xi'$ on the subset $I$ is a product measure. The above amounts to
	$$
	\int f d \xi \geq \int f d \xi' \stackrel{\eqref{eq:xitnu}}{\geq} \int f d \tilde \nu - 2^6 \Lip(f) \sqrt{\frac{n \Comp(f)}{\eps}}.
	$$
	Moreover, equation \eqref{eq:ens1} implies that
	$$
	\left |\int \tilde f d \tilde \nu - \left (\int f d \tilde \nu - \log \int e^{f} d \mu \right ) \right | \leq \eps n.
	$$
	By Fact \ref{fact:chain}, together with the chain rule for relative entropy, we have that
	\begin{align*}
	\KL(\tilde \nu \Vert \mu) ~& \geq \KL \bigl (\pi_I(\tilde \nu) \times \pi_{[n] \setminus I}(\tilde \nu) \Vert \mu \bigr)  \\
	& \geq \KL(\xi' \Vert \mu) \\
	& \geq \KL(\xi \Vert \mu) - (n-|I|) \log 2.
	\end{align*}	
	Since $e^{\tilde f}$ is a probability density, and since $n-|I| \leq 2n e^{-1/(32 \eps^2)}$, we therefore get
	$$
	\int \tilde f d \tilde \nu = \int \tilde f e^{\tilde f} d \mu = \KL(\tilde \nu \Vert \mu) \geq \KL(\xi \Vert \mu) - 2n e^{-1/(32 \eps^2)}.
	$$
	A combination of the above finally gives
	$$
	\int f d \xi - \log \int e^{f} d \mu - \KL(\xi \Vert \mu) \geq - \left (\eps + 2 e^{-1/(32)\eps^2} \right ) n - 2^6 \Lip(f) \sqrt{\frac{n \Comp(f)}{\eps}}.
	$$
	We choose $\eps = \left (4 \frac{\Lip(f)^2 \Comp(f)}{n} \right )^{1/3}$. Remark that we may legitimately assume that $$64 \Lip(f)^{2/3} \Comp(f)^{1/3} n^{2/3} \leq n \log 2,$$
	since, otherwise, the result of the corollary holds trivially by taking $\xi$ to be supported on the point where $f$ attains its maximum. This amounts to $\eps < 2^{-4}$. Since $2 e^{-1/(32)\eps^2} \leq \eps$ for all $\eps \in (0, 2^{-4})$, we get the desired bound.	
\end{proof}

\begin{proof} [Proof of Theorem \ref{thm:largedev}]
	We begin by following similar lines to the proof of \cite[Theorem 1.1]{CD14}. Define
	$$
	h(x) = \begin{cases}
	2x + 1 &  x \leq -1 \\
	- x^2 & -1 \leq x \leq 0 \\
	0 & x \geq 0
	\end{cases}
	$$
	so that $h$ is concave and $|h'| \leq 2$. By a Taylor approximation argument we clearly have for $x_0, x \in \RR$,
	\begin{equation}\label{eq:taylor}
	\left |h(x_0+ x) - h(x_0) - x h'(x_0) \right | \leq x^2.
	\end{equation}
	Next, define
	$$
	\psi(x) = K h((x-t)/\delta)
	$$
	for $K = \phi_p(t-\delta) / n$. Thus $|\psi'| \leq \frac{2K}{\delta}$ and $|\psi''| \leq \frac{2 K}{\delta^2}$. 
	Now set
	$$
	g(y) = n \psi(f(y)/n), ~~ \forall y \in \DC.
	$$
	Our first goal is to give an estimate for the complexity $\Comp(g)$ in terms of $\Comp(f)$. Define
	$$
	\mathcal{A}_f = \{\nabla f(y): y \in \DC\}, ~~ \mathcal{A}_g = \{\nabla g(y): y \in \DC\}
	$$
	For $y \in \DC$ define $S_i(y)$ to be the unique point all of whose coordinates except for the $i$th coordinate are equal to the corresponding ones of $y$ (and the $i$-th coordinate is the negative of its concurrent). Using a Taylor approximation to $\psi$ at $f(y)/n$, for all $y=(y_1,\dots,y_n) \in \DC$ we have the bound
	\begin{align*}
	\left | \partial_i g(y) - \psi'(f(y)/n) \partial_i f(y) \right |  ~& =  \left | - \tfrac{y_i}{2} \bigl( g(S_i(y))-g(y) \bigr ) - \psi'(f(y)/n) \partial_i f(y) \right | \\
	& = \left | -n \tfrac{y_i}{2} \bigl(\psi(f(S_i(y)) / n) - \psi(f(y) / n) \bigr) - \psi'(f(y)/n) \partial_i f(y) \right | \\
	& \stackrel{\eqref{eq:taylor}}{\leq} \frac{K}{2 \delta^2 n } \Lip(f)^2,
	\end{align*}
	or in other words,
	$$
	\left | \nabla g(y) - \psi'(f(y)/n) \nabla f(y) \right | \leq \frac{K}{\delta^2 \sqrt{n}} \Lip(f)^2.
	$$
	Consequently, since $|\psi'(f(y)/n)| \leq \frac{2K}{\delta}$ for all $y \in \DC$, we have for every vector $v \in \RR^n$,
	$$
	\sup_{u \in \mathcal{A}_g} \langle v,u \rangle \leq \max \left (0, \frac{2K}{\delta} \sup_{u \in \mathcal{A}_f} \langle v,u \rangle \right ) + \frac{K}{\delta^2 \sqrt{n}} \Lip(f)^2 |v|.
	$$
	Now, since for a standard Gaussian random vector $\Gamma$ one has $\EE|\Gamma| \leq \sqrt{n}$, we get that
	\begin{equation}\label{eq:mwgsmall}
	\Comp(g) \leq \frac{2K}{\delta} \Comp(f) + \frac{K}{\delta^2} \Lip(f)^2.
	\end{equation}
	By the same considerations, we also have that
	\begin{equation}\label{eq:lipgsmall}
	\Lip(g) \leq \sup_{x \in \RR} |\psi'(x)| \Lip(f) \leq \frac{2K}{\delta} \Lip(f).
	\end{equation}
	Since $f(x) \geq tn \Rightarrow g(x)=0$ we get that, for $Y \sim \mu_p$,
	\begin{equation}\label{eq:largedev1}
	\PP(f(Y) \geq tn) \leq \EE[\exp(g(Y))] = \int_{\DC} \exp(g(y) + \log I(\vec p,y)) d \mu
	\end{equation}
	where $\vec p = (-1+2p) (1,\dots,1)$ and $I(x,y) = \prod_{i \in [n]} (1 + x_i y_i)$, so that $\log I(\vec p, y) = \sum_i \log(1-y_i+2p y_i)$. We also easily have
	$$
	\Lip(\log I(\vec p, \cdot)) \leq |\log (p(1-p))|.
	$$
	Invoking Corollary \ref{cor:main}, we learn that for some product measure $\xi$,
	{\small
		\begin{equation}\label{eq:applycor}
		\log \int_{\DC} \exp(g(y) + \log I(\vec p,y)) d \mu
		\end{equation}
		$$
		\leq \int g d \xi + \int \log I(\vec p,y) d \xi(y) - \KL(\xi \Vert \mu) + 64 (\Lip(g) + |\log(p)(1-p)|)^{2/3} \Comp(g)^{1/3} n^{2/3}.
		$$
	}
	An easy calculation shows that $ \int \log I(\vec p,y) d \xi(y) - \KL(\xi \Vert \mu) = - \KL(\xi \Vert \mu_p)$. Combining this with \eqref{eq:mwgsmall} and \eqref{eq:lipgsmall} we have that
	\begin{equation}\label{key}
	\log \int_{\DC} \exp(g(y)) d \mu_p \leq \int g d \xi - \KL(\xi \Vert \mu_p) + 64 K L n^{2/3}
	\end{equation}
	with
	$$
	L := \frac{1}{\delta} \left ( 2 \Lip(f) + |\log(p(1-p))| \right )^{2/3} \left ( 2 \Comp(f) + \frac{1}{\delta} \Lip(f)^2 \right )^{1/3}
	$$
	where we use the assumption $\delta \leq \frac 1 n \phi_p(t-\delta)$ which implies $K/\delta \geq 1$. Next, we claim that for every product measure $\xi$ one has
	\begin{equation}\label{eq:gfp}
	\int g d \xi - \KL(\xi \Vert \mu_p) \leq - \phi_p(t-\delta).
	\end{equation}
	Indeed, if $\xi$ is such that $\int f d \xi \geq (t-\delta) n$ then by definition of $\phi_p$, we must have that $-\KL(\xi \Vert \mu_p) \leq -\phi_p(t-\delta)$, and since $g$ is non-positive, the inequality is correct. So, we may assume that $\int f d \xi \leq (t-\delta) n$. By the concavity and monotonicity of $\psi$ and by Jensen's inequality,
	we have
	$$
	\int g d \xi = \int n \psi(f(y)/n) d \xi(y) \leq  n \psi \left (\int f d \xi / n \right ) \leq n \psi \left ( t-\delta \right ) =-K n = -\phi_p(t-\delta).
	$$
	This establishes \eqref{eq:gfp}. Together with equations \eqref{eq:largedev1} and \eqref{eq:applycor}, we finally get
	$$
	\log \PP(f(Y) \geq tn) \leq - \phi_p(t-\delta) (1 - 64 L n^{-1/3}).
	$$
	The proof of the upper bound is complete. 

	Moving on to the lower bound, we define the function $g$ in the same way, with the exception that this time we take $K = 2 (\phi_p(t)+1)/n$. With the help of equation \eqref{eq:taylor} and by Jensen's inequality we have for every random variable $Z$,
	$$
	\EE \left [n \psi (Z / n) \right ] \geq n ( \psi(\EE[Z] / n)) - \frac{K}{\delta^2 n} \mathrm{Var}[Z].
	$$	
	Moreover, clearly if $W_1,\dots,W_n$ are independent Bernoulli random variables (with arbitrary expectation) and $W=(W_1,\dots,W_n)$ setting $M_i = \EE[f(W) | W_1,\dots,W_i]$ then
	\begin{align*}
	\mathrm{Var}(f(W)) = \sum_{i=1}^n \EE (M_{i}-M_{i-1})^2 \leq \Lip(f)^2 \sum_{i \in [n]} \mathrm{Var}[W_i] \leq n \Lip(f)^2.
	\end{align*}
	The two last displays teach us that for every product measure $\xi$ one has
	$$
	\int g d \xi \geq n \psi \left ( \frac{\int f d \xi} {n} \right ) - \frac{K}{\delta^2 } \Lip(f)^2.
	$$
	which finally gives
	\begin{align*}
	\PP(f(Y) \geq (t - \delta) n) ~& \geq \int e^g d \mu_p - \int_{\{y: f(y) \leq (t-\delta) n \} } e^{g} d \mu_p  \\
	& \geq \int e^g d \mu_p - \exp(-2 n K) \\
	& \geq \exp \left (\sup_{\xi \in \mathcal{PM}} \int g d \xi - \KL(\xi \Vert \mu_p)\right ) - \exp(- 2\phi_p(t) - 2) \\
	& \geq \exp \left (\sup_{\xi \in \mathcal{PM}} \left ( n \psi \left ( \frac{\int f d \xi} {n} \right ) - \KL(\xi \Vert \mu_p) \right ) - \frac{2 K}{\delta^2 } \Lip(f)^2 \right ) - \exp(-2\phi_p(t)-2) \\ 
	& \geq \exp \left (-\phi_p(t) \left ( 1 + \frac{1}{n \delta^2 } \Lip(f)^2 \right ) - \frac{1}{n \delta^2} \Lip(f)^2  \right ) - \exp(- 2\phi_p(t)-2) \\
	& \geq \exp \left (-\phi_p(t) \left ( 1 + \frac{1}{n \delta^2 } \Lip(f)^2 \right ) - 1 \right ) \Bigl (1 - \exp(-1) \Bigr )
	\end{align*}
	where the last inequality uses the assumption $\frac{2}{n \delta^2 } \Lip(f)^2 \leq 1$. We finally get
	$$
	\log \PP(f(Y) \geq (t - \delta) n) \geq -\phi_p(t) \left ( 1 + \frac{2}{n \delta^2 } \Lip(f)^2 \right ) - 2.
	$$
	The proof is complete.	
\end{proof}

\section{Exponential random graphs and complexity of subgraph counts}

Fix an integer $N$ and set $n = {N \choose 2}$. Recall that we define by $\mathcal{P}$ the set of above-diagonal sequences $(y_{i,j})_{1 \leq i < j \leq N}$, which is identified with $C_n$. For a finite, simple graph $H$ and a point $y \in \DC$ we define
$$
f_H(y) = N^2 t(H,G_y).
$$
where $G_y$ is the graph associated with $y$ and $t(H,G)$ is the homomorphism density defined in section \ref{sec:expgraphs}. The next lemma gives a bound for the Gaussian-width complexity of subgraph counts.
\begin{lemma} \label{lem:subgraphcomplexity}
	For every finite simple graph $H=([k],E)$ on $k$ vertices, we have $\Comp(f_H) \leq |E| N^{3/2}$.
\end{lemma}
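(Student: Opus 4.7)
\emph{Plan.} The strategy is to generalize the triangle calculation of Section~\ref{sec:triangles} by decomposing $\nabla f_H$ into contributions indexed by edges of $H$ and bounding each separately. Since
\[
f_H(y) \;=\; N^{2-k}\sum_{\phi: V(H)\to[N]} \prod_{e \in E(H)} A(y)_{\phi(e)}
\]
is multilinear in $y$ (where $A(y)$ is the $\{0,1\}$-adjacency matrix of $G_y$), the discrete and continuous partial derivatives coincide, and grouping the resulting sum according to which edge $e$ of $H$ is sent onto $\{i,j\}$ yields $\nabla f_H(y) = \sum_{e \in E(H)} \Lambda^e(y)$, where, identifying $\Lambda^e(y)$ with a symmetric $N \times N$ matrix of zero diagonal,
\[
\Lambda^e(y) \;=\; \tfrac{N^{2-k}}{2}\bigl(B^e(y) + B^e(y)^T\bigr), \quad B^e(y)_{ij} := \sum_{\psi} \prod_{e' \in E(H)\setminus\{e\}} A(y)_{\phi(e')},
\]
with $e = \{u,v\}$, $\phi = \psi \cup \{u \mapsto i,\,v \mapsto j\}$, and $\psi:V(H)\setminus\{u,v\}\to[N]$. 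It then suffices to bound $\MW(\{\Lambda^e(y):y\in\DC\}) \leq N^{3/2}$ uniformly in $e$ and sum over $E(H)$.

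The central step is the uniform nuclear-norm bound $\|B^e(y)\|_\ast \leq N^{k-1}$, equivalently $\|\Lambda^e(y)\|_\ast \leq N$, which is the analog of $\tr(A^2) \leq N^2$ from the triangle case. Since $H$ is simple and $e$ has been removed, no edge in $E(H)\setminus\{e\}$ joins $u$ to $v$, so we may partition $E(H)\setminus\{e\} = E_u \sqcup E_v \sqcup E_{\mathrm{rest}}$ according to incidence with $\{u,v\}$. For each $\psi$ set
\[
\alpha_\psi(i) = \prod_{\{u,w\} \in E_u} A_{i,\psi(w)},\ \ \beta_\psi(j) = \prod_{\{v,w\} \in E_v} A_{j,\psi(w)},\ \ \gamma(\psi) = \prod_{e' \in E_{\mathrm{rest}}} A_{\psi(e')},
\]
so that $B^e_{ij} = \sum_\psi \gamma(\psi)\,\alpha_\psi(i) \beta_\psi(j)$. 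Since $A \in \{0,1\}^{N\times N}$, each $\alpha_\psi(i), \beta_\psi(j) \in \{0,1\}$, hence $\|\alpha_\psi\|_2^2 = \sum_i \alpha_\psi(i) \leq N$ and similarly $\|\beta_\psi\|_2 \leq \sqrt{N}$. Testing against any $Q$ with $\|Q\|_{\mathrm{OP}} \leq 1$ and applying Cauchy--Schwarz,
\[
|\tr(B^e Q)| \leq \sum_\psi |\gamma(\psi)|\, \|\alpha_\psi\|_2 \|\beta_\psi\|_2 \leq N^{k-2} \cdot N = N^{k-1},
\]
and duality gives the claim.

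To conclude, the identity $\langle \Lambda^e(y),\Gamma\rangle = \tfrac12\tr\bigl(\Lambda^e(y)\,M(\Gamma)\bigr)$ (with $M(\Gamma)$ the symmetric lift of $\Gamma$ from Section~\ref{sec:triangles}), combined with the already-used Slepian estimate $\EE\|M(\Gamma)\|_{\mathrm{OP}} \leq 2\sqrt{N}$ and noncommutative H\"older, gives $\MW(\{\Lambda^e(y):y\in\DC\}) \leq \tfrac12 \cdot N \cdot 2\sqrt{N} = N^{3/2}$. Subadditivity of the supremum applied to $\langle \nabla f_H(y),\Gamma\rangle = \sum_e \langle \Lambda^e(y),\Gamma\rangle$ then yields $\Comp(f_H) \leq |E|\,N^{3/2}$. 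The main obstacle is the nuclear-norm bound $\|B^e\|_\ast \leq N^{k-1}$: for general $H$, $B^e$ need not be PSD, so the direct PSD/trace trick of the triangle case is unavailable, and the Cauchy--Schwarz on the two ``boundary legs,'' enabled by the $\{0,1\}$-valuedness of $A$, is the essential new ingredient.
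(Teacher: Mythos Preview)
Your proof is correct and follows essentially the same approach as the paper. Both arguments decompose $\nabla f_H$ by edges of $H$, write each edge-contribution as a sum over $\psi:V(H)\setminus\{u,v\}\to[N]$ of rank-one matrices $\alpha_\psi\beta_\psi^T$ (weighted by the $\{0,1\}$-valued $\gamma(\psi)$), use $\|\alpha_\psi\|_2,\|\beta_\psi\|_2\le\sqrt N$ to get the bound $N$ per term, and finish with $\EE\|M(\Gamma)\|_{\mathrm{OP}}\le 2\sqrt N$. The only cosmetic difference is that you package the sum over $\psi$ as a nuclear-norm bound $\|B^e\|_\ast\le N^{k-1}$, whereas the paper keeps the sum explicit and bounds each rank-one piece against $\|M(\theta)\|_{\mathrm{OP}}$ individually; the two are equivalent.
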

\begin{proof}
	For a fixed $y \in \DC$, let $A=A(y)$ be the adjacency matrix of $G_y$. We have
	$$
	f_H(y) = \frac{1}{N^{k-2}} \sum_{q \in [N]^k} \prod_{(\ell,\ell') \in E} A_{q_\ell, q_{\ell'}}.
	$$
	A calculation gives (see also \cite[Equation (5.2)]{CD14})
	\begin{align} \label{eq:calcder}
	\frac{\partial f_H(y)}{\partial y_{i,j}} ~& = \frac{1}{N^{k-2}} \sum_{(a,b) \in E} \sum_{q \in [N]^{k} \atop q_a = i, q_b = j} \prod_{(\ell,\ell') \in E \atop \{\ell,\ell'\} \neq \{a,b\}} A_{q_\ell, q_{\ell'}} \\
	& = \frac{1}{N^{k-2}} \sum_{(a,b) \in E} \sum_{q \in [N]^{k-2}} \prod_{(a,\ell) \in E \atop \ell \neq b} A_{i, q_{\ell}} \prod_{(b,\ell) \in E \atop \ell \neq a} \nonumber A_{j, q_{\ell}} \prod_{(\ell, \ell') \in E \atop \{a,b\} \cap \{\ell, \ell' \} = \emptyset} A_{q_\ell, q_{\ell'}} \nonumber \\
	& = \frac{1}{N^{k-2}} \sum_{(a,b) \in E} \sum_{q \in [N]^{k-2}} J_{i,j}(A,a,b,q) \nonumber
	\end{align}
	where 
	$$
	J_{i,j}(A,a,b,q) = \prod_{(a,\ell) \in E \atop \ell \neq b} A_{i, q_{\ell}} \prod_{(b,\ell) \in E \atop \ell \neq a} A_{j, q_{\ell}} \prod_{(\ell, \ell') \in E \atop \{a,b\} \cap \{\ell, \ell' \} = \emptyset} A_{q_\ell, q_{\ell'}}, ~~ \forall (a,b) \in E, ~ q \in [N]^{k-2}.
	$$
	By construction, we see that for all $a,b,q$ the matrix $J(A,a,b,q)$ is of rank $1$. Moreover, clearly the entries of this matrix are all in $\{0,1\}$. Consequently, we have for all $M \in \mathbb{M}_{N \times N}$
	$$
	\sup_{A \in \mathbb{M}_{n \times n} \atop q \in [N]^{k-1}, (a,b) \in E} \tr(J_{i,j}(A,a,b,q) M) \leq \sup_{u,v \in \{0,1\}^n} \langle u, M v \rangle \leq N \Vert M \Vert_{\mathrm{OP}}.
	$$
	Thus if for $\theta \in \RR^n$ we set $M(\theta)$ to be the unique symmetric matrix with null diagonal and whose above-diagonal entries that those of $\theta$, then the above inequality combined with \eqref{eq:calcder} yields
	\begin{align*}
	\langle \nabla f_H(y), \theta \rangle ~& = \frac{1}{N^{k-2}} \sum_{(a,b) \in E} \sum_{q \in [N]^{k-2}} \tr(J_{i,j}(A,a,b,q) M(\theta)) \\
	& \leq \frac{1}{2 N^{k-2}} \sum_{(a,b) \in E} \sum_{q \in [N]^{k-2}} N \Vert M(\theta) \Vert_{\mathrm{OP}} \\
	& \leq \frac{1}{2}|E| \cdot N \Vert M(\theta) \Vert_{\mathrm{OP}}.
	\end{align*}
	Therefore, if $\Gamma$ is a standard Gaussian random vector in $\RR^n$ then
	$$
	\EE \sup_{y \in \DC} \langle \nabla f_H(y), \Gamma \rangle \leq  \frac{1}{2}|E| \cdot N \EE \Vert M(\Gamma) \Vert_{\mathrm{OP}} \leq |E| N^{3/2}.
	$$
	where the last inequality follows from standard estimates for norms of Gaussian matrices. The proof is complete.
\end{proof}

We can now prove our Theorem about decomposition of exponential random graphs.
\begin{proof}[Proof of Theorem \ref{thm:exponential}]
Setting $n = {N \choose 2}$, we identify a point $y \in \DC$ with the graph $G_y$ as in Section \ref{sec:triangles}. Consider the function
$$
f(y) = N^2 \sum_{i=1}^l \beta_i t(H_i,G_y).
$$
According to Lemma \ref{lem:subgraphcomplexity} and since the Gaussian-width is sub-additive, the complexity of $f$ is bounded as follows
$$
\Comp(f) \leq 2 n^{3/4} \sum_{i=1}^l |\beta_i| |E(H_i)|.
$$
Define the measure $\nu$ on $\DC$ by $d \nu = \frac{e^{f} d \mu}{\int_{\DC} e^f d \mu}$. Observe that by definition if $Y \sim \nu$ then $G_Y \stackrel{d}{=} G$.
We now apply Theorem \ref{thm:maindecomp} with the measure $\nu$, $\eps$ and some $\alpha > 1$ whose value will be chosen later on, to obtain a measure $m$ on $B(0, \eps \sqrt{n})$. Moreover, for all $\theta \in \RR^n$ define by $\xi_\theta$ the product measure having the same marginals as $\tilt_\theta \nu$, and define by $\vec p(\theta)$ to be the unique $\vec p \in [0,1]^{ {N \choose 2} }$ such that $G(N,\vec p)$ has the same distribution as $G_Z$ with $Z \sim \xi$. Let $\rho$ be push-forward of $m$ under the map $\theta \to \vec p(\theta)$. For all $\theta$ let $Y_\theta$ be a random point having the law $\tilt_\theta \nu$. Moreover let $\tilde \theta$ be a random variable in $\RR^n$ whose law is $m$, which is independent from the family $\{Y_\theta\}$. Then by equation \eqref{eq:decomp1} we have that
$$
G := G_{Y_{\tilde \theta}}
$$
has the exponential graph distribution \eqref{eq:defexpgraph}. Moreover, for all $\theta \in \RR^n$ let $G'_\theta$ be distributed as $G(N, \vec p(\theta))$. We may assume that $G'_\theta$ are defined on the same probability space, and that the family $\{G'_\theta\}$ is independent of $\tilde \theta$. Now, define
$$
G' = G'_{\tilde \theta}.
$$
It is clear that $G'$ has the distribution $G(N,\rho)$. Now, according to equation \eqref{eq:thmmd-2} there exists $\Theta$ with $m(\Theta) \geq 1 - \frac{1}{\alpha} - \frac{1}{n}$ such that for all $\theta \in \Theta$ we may couple the graph $G'_\theta$ with $G_{Y_\theta}$ so that
$$
\EE d_H\bigl (G_{Y_\theta}, G'_\theta \bigr ) \leq 16 \sqrt{\frac{\alpha n \Comp(\nu)}{\eps}}.
$$
Therefore, by taking expectation with respect to $\tilde \theta$, we have
$$
\EE d_H\bigl (G, G' \bigr ) \leq 16 \sqrt{\frac{\alpha n \Comp(\nu)}{\eps}} + n  \left (\frac{1}{\alpha} + \frac{1}{n} \right ).
$$
Choosing $\alpha = \frac{n^{1/3} \eps^{1/3}}{\Comp(\nu)^{1/3}}$, we get
$$
\EE d_H\bigl (G, G' \bigr ) \leq \frac{20 n^{2/3} \Comp(\nu)^{1/3}}{\eps^{1/3}} \leq \frac{20 n^{11/12} }{\eps^{1/3}} \left (\sum_{i=1}^l |\beta_i| |E(H_i)|\right )^{1/3}
$$
and finally equation \eqref{eq:thmmd-3} tells us that
$$
\int I(p) d \rho(p) \geq \Ent(G) - 2 \eps n.
$$

\end{proof}

\section{Appendix - proofs of Auxiliary results}
\begin{proof}[Proof of Fact \ref{fact:chain}]
	This is a straightforward consequence of the chain rule for relative entrory. Let $Y=(Y_1,\dots,Y_n) \sim \nu$ and $\tilde Y= (\tilde Y_1,\dots,\tilde Y_n) \sim \xi$. Then we have
	\begin{align*}
	n \log 2 - \KL(\nu \Vert \mu) ~& = H((Y_1,\dots,Y_n)) \\
	& = \sum_{i=1}^n H(Y_i | (Y_1,...,Y_{i-1})) \\
	& \leq \sum_{i=1}^n H(Y_i) \\
	& = \sum_{i=1}^n H(\tilde Y_i) = n \log 2 - \KL(\xi \Vert \mu).
	\end{align*}
\end{proof}

\begin{proof}[Proof of Lemma \ref{lem:SF}]
	By definition of the function $g(y) = g_\nu(y)$ (or according to equation \eqref{eq:defgp}), we have that for all $y \in \DC$ if we denote $g_\nu(y)=(g_1,...,g_n)$ and $\nabla f_\nu(y) = (d_1,...,d_n)$ then
	$$
	g_i = \varphi(d_i) := \frac{e^{d_i} - 1}{e^{d_i} + 1 }.
	$$
	Since $|\varphi'(x)| \leq 1$ for all $x \in \RR$, we have that the set $\left \{ g_\nu(y) : y \in \DC  \right \}$ is the image of the set $\left \{ \nabla f(y) : y \in \DC  \right \}$ under a $1$-Lipschitz mapping. In turn, we have for all $y_1,y_2 \in \DC$,
	$$
	|g_\nu(y_1) - g_\nu(y_2)| \leq |\nabla f_\nu(y_1) - \nabla f_\nu(y_2)|
	$$
	and therefore	
	$$
	\EE \left [ \langle g(y_1) - g(y_2), \Gamma \rangle^2 \right ] \leq \EE \left [ \langle \nabla f (y_1) - \nabla f(y_2), \Gamma \rangle^2 \right ]
	$$
	where $\Gamma$ is a standard Gaussian random vector. An application of the Sudakov-Fernique inequality (see e.g., \cite[Chapter 3]{LedouxTalagBook}) completes the proof.
\end{proof}

\begin{proof}[Proof of Fact \ref{fact:dgt}]
	The proof is a straightforward calculation using It\^{o}'s formula,
	\begin{align}
	d g_t ~& = \frac{d q(X_t)}{h(X_t)} - \frac{q(X_t) d h(X_t) }{ h^2(X_t) } + \frac{q(X_t) d[h(X)]_t }{h^3(X_t)} - \frac{d[q(X),h(X)]_t}{h^2(X_t)} \nonumber \\
	& = \left  ( \frac{\nabla q(X_t) }{ h(X_t)} - \frac{q(X_t) \otimes \nabla h(X_t) }{h^2(X_t)} \right ) \left ( \sigma_t^{1/2} d B_t + \sigma_t v_t dt \right ) \nonumber \\
	& ~~~~ + \frac{q(X_t) \langle \sigma_t \nabla h(X_t), \nabla h(X_t) \rangle }{h^3(X_t)} dt - \frac{ \nabla q(X_t) \sigma_t \nabla h(X_t)  }{h^2(X_t)} dt \nonumber \\
	& = \left  ( \frac{\nabla q(X_t) }{ h(X_t)} - g_t \otimes v_t \right ) \left ( \sigma_t^{1/2} d B_t + \sigma_t v_t dt \right ) \nonumber \\
	& ~~~~ + g_t \langle v_t, \sigma_t v_t \rangle dt - \frac{ \nabla q(X_t) \sigma_t v_t  }{h(X_t)} dt \nonumber \\
	& = \left  ( \frac{\nabla q(X_t) }{ h(X_t)} - g_t \otimes v_t \right ) \sigma_t^{1/2} d B_t. \nonumber
	\end{align}
\end{proof}

\begin{proof}[Proof of Lemma \ref{lemHtHt}]
	Let $Y \in \DC$ be a random variable with law $\tilde \nu$. Fix $i \in [n]$. The lemma will be concluded by showing that 
	\begin{equation}\label{eq:needtoshowhtht}
	\mathrm{Var}  \langle g_{\tilt_\theta \nu}(Y), e_i \rangle \leq \exp(4 \| \theta \|_\infty ) \mathrm{Var}  \langle g_{\nu}(Y), e_i \rangle.
	\end{equation}
	Next, by definition of $\tilt_\theta \nu$ we have $\nabla f_{\tilt_\theta \nu} = \nabla f_{\nu} + \theta$, and equation \eqref{eq:tanh} gives
	$$
	g_{\tilt_\theta \nu}(y) = \tanh( \nabla f_{\tilt_\theta \nu}(y) ) = \tanh( \nabla f_{\nu}(y) + \theta ) = \tanh( \tanh^{-1}(g_{\nu}(y) ) + \theta )
	$$
	for all $y \in \DC$, so that 
	$$
	\langle g_{\tilt_\theta \nu}(y), e_i \rangle = u_{\theta_i}( \langle g_{\nu}(y), e_i \rangle ), ~~ \forall y \in \DC,$$ 
	where $u_s(z) := \tanh(\tanh^{-1}(z) + s)$.
	
	Defining $Z = \langle g_{\nu}(Y), e_i \rangle$ and using the last display, equation \eqref{eq:needtoshowhtht} becomes
	$$
	\mathrm{Var} \left [ u_{\theta_i}(Z) \right ] \leq \exp(4 \| \theta \|_\infty ) \mathrm{Var} [Z].
	$$
	It is straightforward to check that for all $x \in (-1,1)$ and $s \in \RR$, one has $\left | \frac{d}{dx} u_s(x) \right | \leq \exp^{2 |s|}$. Since for any $L$-Lipschitz function $u$ and any random variable $Z$ one has $\rm{Var} [u(Z)] \leq L^2 \mathrm{Var} [Z]$, it follows that
	$$
	\mathrm{Var} \left [ u_{\theta_i}(Z) \right ] \leq \exp(4 |\theta_i|) \mathrm{Var} [Z].
	$$
	This yields \eqref{eq:needtoshowhtht} which completes the proof.
\end{proof}

\begin{proof} [Proof of Lemma \ref{lem:vtcoord}]
	Define $x = \langle X_t, e_i \rangle$, $V = \langle v_\infty, e_i \rangle$ and $G = \langle g_\infty, e_i \rangle$. Let $\tilde p: \DC \to \RR$ be a function satisfying $\partial_i \tilde p \equiv 0$, let $p$ be the harmonic extension of $\tilde p$ to $\CC$ and define $P = \tilde p(X_\infty)$. Set $\tilde \DC = \{ (y_1,...,y_n), ~ y_i = \langle X_t, e_i \rangle \mbox{ and } y_j \in \{-1,1\}, \forall j \neq i \}$ and let $\pi:\{-1,1\}^n \to \{-1,1\}^{n-1}$ be the projection defined by
	$\pi((y_1,...,y_n)) = (y_1,...,y_{i-1},y_{i+1},...,y_n)$. We calculate
	\begin{align*}
	\EE[V P | \FF_t ] ~& \stackrel{ \eqref{eq:com}}{=} \frac{1}{h(X_t)} \sum_{y \in \DC} w(X_t, y) e^{f(y)} p(y) \langle v(y), e_i \rangle \\
	& = \frac{1}{h(X_t)} \sum_{y \in \DC} w(X_t, y) p(y) \partial_i e^f(y) \\
	& = \frac{1}{h(X_t)} \sum_{\tilde y \in \tilde \DC} w(\pi(X_t), \pi(\tilde y)) p(\tilde y) \partial_i h(\tilde y) \\
	& = \frac{1}{h(X_t)} \sum_{\tilde y \in \tilde \DC}  w(\pi(X_t), \pi(\tilde y)) h(\tilde y) p(\tilde y) \frac{\partial_i h(\tilde y)}{h(\tilde y)} \\
	& = \frac{1}{h(X_t)} \sum_{y \in \DC} w(X_t, y) h(y) p(y) \frac{\partial_i h(y)}{h \bigl( (y_1,\dots,y_{i-1},x,y_{i+1},\dots,y_n) \bigr )} \\
	& \stackrel{ \eqref{eq:gv} }{ = } \frac{1}{h(X_t)} \sum_{y \in \DC} w(X_t, y) h(y) p(y) \zeta_{x}(\langle g(y), e_i \rangle) \stackrel{ \eqref{eq:com}}{=} \EE[ \zeta_{x}(G) P | \FF_t].
	\end{align*}
	The proof of \eqref{eq:vtcoord} follows by taking $\tilde p(y) = 1$ and that of \eqref{eq:vtcoord2} follows by taking $\tilde p(y) = \langle g(y) - g_t, e_i \rangle$. 
	Finally, in order to obtain the bound \eqref{eq:vtbound}, we combine the formula \eqref{eq:vtcoord} with the estimate
	$$
	|\zeta_x(g)| = \left | \frac{g}{1+gx} \right | \leq \frac{1}{1-\tfrac{1}{2}} \leq 2, ~~ \forall g \in [-1,1], x \in [-1/2,1/2].
	$$
	To obtain the bound \eqref{eq:vtbound2}, observe that for any $x,g \in (0,1)$ one has	
	$$
	x \cdot \zeta_x(g) = \frac{gx}{1+gx} \leq 1,
	$$
	use the formula \eqref{eq:vtcoord} and take expectation.

\end{proof}

\bibliographystyle{plainnat}
\bibliography{bib}

\end{document}